\theoremstyle{plain}
\newtheorem{theorem}{Theorem}[section]
\newtheorem{proposition}[theorem]{Proposition}
\newtheorem{corollary}[theorem]{Corollary}
\newtheorem{lemma}[theorem]{Lemma}
\theoremstyle{definition}
\newtheorem{definition}[theorem]{Definition}
\newtheorem{example}[theorem]{Example}
\theoremstyle{remark}
\newtheorem{remark}[theorem]{Remark}
\DeclareMathOperator{\Id}{Id}
\DeclareMathOperator{\sym}{sym}
\DeclareMathOperator{\skewo}{skew}
\DeclareMathOperator{\dist}{dist}
\DeclareMathOperator*{\esssup}{ess\,sup}
\def\Xint#1{\mathchoice
   {\XXint\displaystyle\textstyle{#1}}%
   {\XXint\textstyle\scriptstyle{#1}}%
   {\XXint\scriptstyle\scriptscriptstyle{#1}}%
   {\XXint\scriptscriptstyle\scriptscriptstyle{#1}}%
   \!\int}
\def\XXint#1#2#3{{\setbox0=\hbox{$#1{#2#3}{\int}$}
     \vcenter{\hbox{$#2#3$}}\kern-.5\wd0}}
\def\dashint{\Xint-}
\newcommand\matr[1]{\begin{pmatrix} #1 \end{pmatrix}}  
\newcommand\abs[1]{\left|#1\right|} 
\newcommand\norm[1]{\left \Vert #1\right \Vert} 
\newcommand{\Z}{\mathbb{Z}}
\newcommand{\N}{\mathbb{N}}
\newcommand{\R}{\mathbb{R}}
\newcommand{\eps}{\varepsilon}
\newcommand{\dtilde}[1]{\tilde{\raisebox{0pt}[0.85\height]{$\tilde{#1}$}}}
\newcommand{\cell}{\normalfont\text{cell}} 
\newcommand{\surf}{\normalfont\text{surf}} 
\newcommand{\rel}{\normalfont\text{rel}}
\newcommand\doublebar[1]{{\bar{\bar{#1}}}} 
\newcommand{\Ox}{\Omega}
\newcommand{\Oxc}{\overline{\Omega}}
\newcommand{\Lambra}{\Lambda}
\renewcommand*\env@matrix[1][*\c@MaxMatrixCols c]{%
  \hskip -\arraycolsep
  \let\@ifnextchar\new@ifnextchar
  \array{#1}}
\begin{document}
\begin{center}
\begin{LARGE}
A derivation of the time dependent von Kármán equations \\[2mm]  from atomistic models
\end{LARGE}
\\[0.7cm]
\begin{large}
David Buchberger\footnote{Universität Augsburg, Germany, {\tt buchbd@posteo.de}} and Bernd Schmidt\footnote{Universität Augsburg, Germany, {\tt bernd.schmidt@math.uni-augsburg.de}}
\end{large}
\\[0.5cm]
\today
\\[1cm]
\end{center}

\begin{abstract}
We derive the time-dependent \emph{von Kármán plate equations} from three dimensional, purely atomistic particle models. 
In particular, we prove that a thin structure of interacting particles whose dynamics is governed by Newton's laws of motion is effectively described by the von Kármán equations in the limit of vanishing interatomic distance $\eps$ and vanishing plate thickness $h$. 
While the classical plate equations are obtained for $\eps \ll h \ll 1$, we find new plate equations for finitely many layers in the ultrathin case $\eps \sim h$. 
\end{abstract}

\section{Introduction}

    The aim of this contribution is to provide a justification of the time-dependent von Kármán plate equations as an effective model for a thin structure of classically interacting atomistic particles. 

    Von Kármán's plate equations play a central role in the continuum mechanical description of plate structures. Formulated by Theodore von Kármán in the first half of the 20th century \cite{vonKarman}, they continue to find a wide range of applications in the engineering sciences. To justify these equations from basic continuum mechanical models is thus a major challenge in mathematical elasticity theory, cf.\ e.g.\ \cite{Ciarlet:97,Antman:95} for classical approaches. Only comparably recently a full justification from 3d nonlinear elasticity theory in the time dependent setting was achieved by Abels, Mora and Müller in \cite{AbelsMoraMueller_09,AbelsMoraMueller_11}. 

    First rigorous results deriving lower dimensional effective theories as a $\Gamma$-limit from 3d elasticity in the static setting were obtained in \cite{AcerbiButtazzoPercivale:91,AnzellottiBaldoPercivale:94,LeDretRaoult1995}. Most notably, \cite{FrieseckeJamesMueller_02,FrieseckeJamesMueller_06} have triggered extensive further development in this area. In particular, the energy functional for von Kármán plate theory has been obtained in \cite{FrieseckeJamesMueller_06}. In contrast, convergence results beyond those for (almost) minimizers are far less well understood. In \cite{MoraMuellerSchultz:07,MoraMueller:08,MuellerPakzad_08} the convergence of equilibria for rods and von Kármán plates plates, respectively, which might not be absolute minimizers was established under additional linear growth assumptions on the derivative of the stored energy functional. These assumptions could be weakened to a physically more realistic growth assumption in \cite{MoraScardia_09, DavoliMora_12}. In a quasi-static regime, versions of the von Kármán plate equations have been derived for plastic and viscoelastic materials, cf.\ \cite{Davoli:14,FriedrichKruzik:20}. Except for the aforementioned contribution \cite{AbelsMoraMueller_09}, little seems to be known on dimensionally reduced time-dependent systems that take proper account of inertial effects except for the recent papers \cite{AbelsAmeismeier_22,AbelsAmeismeier_23}. 

    For very small objects on the nanoscale in particular, the natural question arises whether atomistic effects should be taken into account in order to appropriately describe specimens which only consist of a few atomic layers in their transversal direction. This leads to considering interacting particle models and to deriving effective theories directly from their atomistic interaction potentials. Seminal work on such discrete-to-continuum limits were obtained in \cite{AlicandroCicalese:04,BraidesSolciVitali:07,BraunSchmidt:13}. In combination with a simultaneous dimension reduction, such passages from discrete to continuum have been achieved in \cite{FrieseckeJames:00,Schmidt:06,Schmidt:08,AlicandroBraidesCicalese:08,SchmidtZeman2023,SchmidtZeman2023b} and, for the von Kármán setting in \cite{BraunSchmidt2022}. Here, in contrast to the continuum derivation one considers atomistic interaction energies of the type 
    \[ E(y) = \sum_{\Lambda_{n,S}} W_{\text{cell}}(\bar{\nabla} y(x)) + \text{surface terms}, \]
    where $\Lambda_{n,S}$ is a finite (cubic) lattice occupying the region of a macroscopic plate of height $h_n$ with interatomic distance $\varepsilon_n$ and $\bar{\nabla}$ is a finite difference operator capturing nearest and next-to-nearest atom interactions.
    Such an atomistic approach does indeed describe both thin films (with $0 < \varepsilon_n \ll h_n \ll1$) and even `ultrathin' films (with $0 < \varepsilon_n \sim h_n \ll1$) adequately in the simultaneous limit $\varepsilon_n, h_n \searrow 0$. 
    In the latter case, the authors obtained a new von Kármán plate theory which also features surface contributions that cannot be neglected for such thin objects. For analogous results on Kirchhoff plates and rods, respectively, we refer to \cite{Schmidt:06,SchmidtZeman2023}.

    In the present paper we start with time dependent atomistic deformations that obey Newton's second law of motion. We show that, under suitable assumptions on the interaction potential, rescaled displacements of these maps converge to (weak) solutions of the time-dependent von Kármán equations as both the interatomic distance and the plate height tend to zero. We capture both cases, thin and ultrathin films. To the best of our know\-ledge, we thus provide the first combined discrete-to-continuum passage with simultaneous dimension reduction in the time dependent setting of elastodynamics. Our main result is Theorem~\ref{thm: main-td}. Its general proof strategy is motivated by the continuum results in \cite{AbelsMoraMueller_09}. Yet, our discrete setup leads to severe technical difficulties as we need to provide accurate estimates on finite differences of particle positions and, in particular, obtain control of a suitable atomistic strain tensor in space and time. For thin films, the challenge is to show that for solutions of Newton's equations all discreteness effects fade away and we obtain the classical von Kármán equations in the limit. For ultrathin films the situation is even more involved. Here the plate will not homogenize in the small $x_3$-direction and we need to exactly identify novel contributions to the limiting equations that appropriately capture those residual discreteness effects as well as surface contributions which are of the same order as the bulk terms. As corollaries we obtain a global in time existence result for the limiting equations, see Remark~\ref{rmk:global-existence}, and a corresponding discrete-to-continuum convergence result in the stationary setting, see Theorem~\ref{thm: stationary}.

	\subsection{Notation}
	For $x \in \R^3$ we write $x = (x',x_3)$ with $x' = (x_1, x_2) \in \R^2$. For any matrix $A \in \R^{3 \times 8}$ we define by $A^{(1)} := (A_{\cdot 1}, \ldots , A_{\cdot 4}) \in \R^{3 \times 4}$ and $A^{(2)} := (A_{\cdot 5}, \ldots , A_{\cdot 8}) \in \R^{3 \times 4}$ the matrices consisting of the first four respectively the last four columns of $A$. If $A \in \R^{2 \times 2}$, we write 
		\begin{align*}
			\hat{A} = \matr{A & 0 \\ 0 & 0} 
		\end{align*}
	for its trivial extension to a $3 \times 3$ matrix.
	The matrix of the in-plane-derivatives of a map $y \colon \R^3 \to \R^k$ will be denoted by $\nabla'y = (\partial_1 y, \partial_2 y) \in \R^{k \times 2}$.


\section{Models and results}\label{section:Models-and-Results}


	\subsection{Kinematics of the atomic model}
		Let $S \subset \R^2$ be an open, bounded and connected Lipschitz domain. Let $\varepsilon_n, h_n > 0$ such that $\varepsilon_n, h_n \to 0$, where $\varepsilon_n$ corresponds to the interatomic distance and $h_n$ to the height of the plate. By $\nu_n \in \N$ we denote the number of layers in the $x_3$-direction, i.e. we have
		\[ h_n = (\nu_n - 1) \varepsilon_n. \]
		$\Lambda_{n,S} = (\overline{S} \times [0, h_n]) \cap \varepsilon_n \Z^3$ is the set of atomic positions in the reference configuration. We also set $\Lambra_n = (\R^2 \times [0, h_n]) \cap \varepsilon_n \Z^3$. An atomistic (or `discrete' or `lattice') deformation $w$ is a mapping
		\[w \colon \Lambda_{n,S} \to \R^3.\]
		For simplicity we are modeling fully clamped plates.
		To this end, we will tacitly extend deformations to mappings $w \colon \Lambra_n \to \R^3$ by setting $w(x) = (x_1,x_2,x_3 - \frac{h_n}{2})$ for $x \in \Lambra_n \setminus \Lambda_{n,S}$. 
		
		We define the \emph{dual grids} $\Lambda_{n,S}^\prime$ and $\Lambra_n^\prime$ as the set $x$ of midpoints of `inner', respectively all, lattice cells $x + [- \frac{\varepsilon_n}{2},\frac{\varepsilon_n}{2}]^3$ such that $x + \{- \frac{\varepsilon_n}{2},\frac{\varepsilon_n}{2}\}^3$ is contained in $\Lambda_{n,S}$, respectively, $\Lambra_n^\prime$. 
        For $x \in \Lambra_n^\prime$ we set
		\[Q_n(x) = x + \left(-\frac{\varepsilon_n}{2},\frac{\varepsilon_n}{2}\right)^3.\]
				
		Let $z^1, \ldots, z^8$ be the corners of the unit cube centered at $0$ such that 
		\[Z = \left( z^1, \ldots, z^8 \right) = \frac{1}{2} 
		\matr{-1 & 1 & 1 & -1 & -1 & 1 & 1 &-1 \\
			-1 & -1 &1 &1 &-1 &-1 &1 &1 \\
			-1 &-1 &-1 &-1 &1 &1 &1 &1
		}.
		\]
		For an (extended) atomistic deformation we define the associated discrete gradient
		\[\bar{\nabla} w(x) = \left( {\bar\partial_1} w(x), \ldots, {\bar\partial_8}w(x) \right) \in \R^{3 \times 8}, \quad x \in \Lambra_n^\prime, \label{discrete-gradient}\]
		where
		\[{\bar\partial_i} w(x) = \frac{1}{\varepsilon_n} \biggl( w(x + \varepsilon_n z^i ) - \frac{1}{8} \sum_{j=1}^{8} w(x + \varepsilon_n z^j) \biggr). \label{discrete-partial-derivative}\]
		This defines a linear operator $\bar{\nabla} \colon \{ \varphi \colon \Lambra_n \to \R^3 \} \to \{ \phi: \Lambra_n^\prime \to \R^{3 \times 8} \}$. When equipped with their natural inner products, it admits a formal adjoint $\bar{\nabla}^\ast \colon  \{ \phi: \Lambra_n^\prime \to \R^{3 \times 8} \} \to \{ \varphi \colon \Lambra_n \to \R^3 \}$ given by the identity
		\begin{equation}
			\sum_{\omega \in \Lambra_n^\prime} F(\omega) \colon \left( \bar{\nabla} \varphi \right)(\omega) 
			= \sum_{x \in \Lambra_n} \left( \bar{\nabla}^\ast F \right) (x) \cdot \varphi(x), \label{discrete-adjoint}
		\end{equation}
		where $F: \Lambra_n^\prime \to \R^{3 \times 8}$ and $\varphi: \Lambra_n \to \R^3$ has compact support. For later purposes we notice that $\bar{\nabla}^\ast F$ is given explicitly by the following formula (which can be found by inserting $\varphi(x) = b \delta_{x_0}$ for generic $b \in \R^3$ and $x_0 \in \Lambra_n)$ in \eqref{discrete-adjoint}): 
		\begin{align}
		\bar{\nabla}^\ast F(x) 
		= \sum_i \frac{1}{\varepsilon_n} \bigg( F_{\cdot i} (x - \varepsilon_n z^i) - \frac{1}{8} \sum_{j=1}^{8} F_{\cdot j}(x - \varepsilon_n z^j) \bigg), \label{discrete-adjoint-explicit}
		\end{align}
        where the sum over $i$ runs from $1$ to $8$ in the bulk, i.e., $x_3 \in \{\frac{1}{\nu_n-1}, \ldots, \frac{\nu_n-2}{\nu_n-1}\}$, from $5$ to $8$ in the top layer $x_3 = 1$ and from $1$ to $4$ in the bottom layer $x_3 = 0$. $\bar{\nabla}^\ast$ can be interpreted as a discrete divergence with respect to the second index in $F$. 
		
		We also notice that $\bar{\partial}_i$ obeys the following product rule: 
		\begin{align}\label{eq: bar-product-rule} 
		 {\bar\partial_i} (fg)(x) 
		 &= {\bar\partial_i} f(x) g(x + \varepsilon_n z^i)  + \frac{1}{8} \sum_{j=1}^{8} f(x + \varepsilon_n z^j) \cdot \big( {\bar\partial_i} g(x) - {\bar\partial_j} g(x) \big). 
		\end{align}
				

		It will be convenient to work on a fixed macroscopic domain. To this end, we let  
		$H_n : \R^3 \to \R^3$ be the linear mapping given by $H(x) = (x',h_nx_3)$
        and introduce the rescaled lattices 
        \begin{align*}
            \tilde{\Lambda}_{n,S} = H_n^{-1} \Lambda_{n,S}, \qquad 
            \tilde{\Lambra}_n = H_n^{-1} \Lambra_n, \qquad 
            \tilde{\Lambda}^\prime_{n,S} = H_n^{-1} \Lambda^\prime_{n,S}, \qquad 
            \tilde{\Lambra}^\prime_n = H_n^{-1} \Lambra^\prime_n.
        \end{align*}
        An (extended) deformation $w: \Lambra_n \to \R^3$ will be rescaled to the deformation $y: \tilde{\Lambra}_n \to \R^3$ via $y(x) = w(H_n x)$. The rescaled discrete gradients are given by
		\begin{equation}
			\bar\partial_i^n y(x)  = \frac{1}{\varepsilon_n} \biggl[ y \biggl (x' + \varepsilon_n \left( z^i \right)^\prime, x_3 + \frac{\varepsilon_n}{h_n} z_3^i \biggr) - 	\frac{1}{8} \sum_{j=1}^{8} y \biggl(x' + \varepsilon_n \left( z^j \right)^\prime, x_3 + \frac{\varepsilon_n}{h_n} z_3^j \biggr)\biggr]. \label{discrete-partial-derivative-rescaled}
		\end{equation}
	\subsection{The energy and the quadratic forms}
	We assume that the atomic interaction energy for a deformation can be written as
	\[  E_{\text{atom}}(w) 
	= \sum_{x \in \Lambda_n^{\prime}} W \left(h_n^{-1}x_3, \bar{\nabla} w(x) \right).\] 
	As we will choose $W$ below such that $W(x_3,Z)=0$ and $w$ is clamped at the lateral boundary, in fact only cells that intersect $S \times (0,h_n)$ contribute to $E_{\text{atom}}(w)$.
	We further assume that $W$ is given by a homogeneous cell energy $W_{\cell}: \R^{3\times 8} \to [0,\infty)$ together with homogeneous surface term $W_{\surf}:\R^{3\times 4} \to [0,\infty)$. More precisely,
	\begin{align}
		W(x_3,A) =
		\begin{cases}
			W_{\cell}(A) & \quad \text{if} \quad x_3 \in  (\frac{1}{\nu_n-1}, \frac{\nu_n-2}{\nu_n-1}), \\
			W_{\cell}(A) + W_{\surf}(A^{(2)}) & \quad \text{if} \quad \nu_n \geq 3, \ x_3 \in (\frac{\nu_n-2}{\nu_n-1}, 1), \\
			W_{\cell}(A) + W_{\surf}(A^{(1)}) & \quad \text{if} \quad \nu_n \geq 3, \ x_3 \in (0, \frac{1}{\nu_n-1}), \\
			W_{\cell}(A) + \sum_{i=1}^{2} W_{\surf}(A^{(i)}) & \quad \text{if} \quad \nu_n = 2, \ x_3 \in (0,1).				
		\end{cases} \label{homogeneous-energy}
	\end{align}
	
	We assume that $W(x_3, \cdot)$ is frame indifferent satisfying 
	\begin{align*}
		W_{\cell} (RA+ (c, \ldots, c)) 
		= W_{\cell}(A) 
		\quad \text{and} \quad 
		W_{\surf} (RA^{(1)}+ (c, c, c, c)) 
		= W_{\surf}(A^{(1)}) 
    \end{align*}
	for all $R \in SO(3)$, $c \in \R^3$ and $A \in \R^{3 \times 8}$, minimal at the identity, i.e., 
	\begin{align*}
        W_{\cell}(Z) 
        = W_{\surf} (Z^{(1)}) 
        = 0, 
	\end{align*}
    (and thus even $W_{\cell}(RZ) = W_{\surf}(RZ^{(1)}) = 0$ for every $ R \in SO(3)$) and that 
    \begin{align*} 
    W_{\cell} \text{ and } W_{\surf} \text{ are } C^2        \text{ in a neighborhood of } Z, \text{ respectively, } Z^{(1)}. 
    \end{align*}
    (We only consider $Z^{(1)}$ in $W_{\surf}$ since $Z^{(2)} = Z^{(1)} + (e_3,e_3,e_3,e_3)$.) The last property will allow for a Taylor expansion around $Z$, respectively, $Z^{(1)}$.  
	Moreover we assume that there is a $c_0 > 0$ such that
	\begin{equation}
		W_{\cell}(A) \geq c_0 \ \dist^2(A, SO(3)Z) \label{cellass5} 
	\end{equation}
	whenever $\sum_{i=1}^{8} A_{\cdot i} = 0$.
	Lastly, we make the technical assumption that $W_{\cell}$ and $W_{\surf}$ are $C^1$ and the derivatives satisfy a linear growth condition 
	\begin{align}\label{derivative-est}
		\vert D W_{\cell}(A) \vert + \vert DW_{\surf}(A^{(1)}) \vert \leq C(1 + \vert A \vert) \quad \text{for every} \ A \in \R^{3 \times 8}. 
	\end{align}
	
	For the rescaled deformation $y$ of $w$ we also write 
	\[E_{\text{atom}}(y)
	= \sum_{x \in \tilde{\Lambda}_{n,S}^{\prime}} W(x, \bar{\nabla}_n y(x) ). \]
	
	In addition to the atomic interaction energy we consider an energy contribution from body forces $g_n: \tilde{\Lambda}_{n,S} \to \R^3$ of the form $g_n(x) = h_n^3 g(x')$ for $g$ sufficiently regular. We extend them to $\tilde{\Lambra}_n$ by setting $g_n(x) = 0$ for $\in \tilde{\Lambra}_n \setminus \tilde{\Lambda}_{n,S}$. The overall energy per unit volume is then given by
	\begin{equation} 
		E_n (y) = \frac{\varepsilon_n^3}{h_n} \biggl[ \sum_{x \in \tilde{\Lambra}^\prime_n} W(x,\bar{\nabla}_n y(x)) 
		+ \sum_{x \in \tilde{\Lambra}_n} g_n(x') \cdot y(x)  \biggr]. \label{discrete-energy}
	\end{equation}

	Computing the first variation with test functions $\varphi : \tilde{\Lambra}_n \to \R^3$ that vanish on $\tilde{\Lambra}_n \setminus \tilde{\Lambda}_{n,S}$ leads to 
	\begin{align*}
	\delta E_n (y, \varphi) = \frac{\varepsilon_n^3}{h_n} \biggl[ \sum_{x \in \tilde{\Lambra}^\prime_n} D_A W(x,\bar{\nabla}_n y(x)) : \bar{\nabla}_n \varphi(x)
		+ \sum_{x \in \tilde{\Lambra}_n} g_n(x') \cdot \varphi(x)  \biggr]. 
	\end{align*}
	Thus, under a deformation $y$, the force $F_n^y(x)$ that is exerted on the atom with reference point $x \in \tilde{\Lambda}_{n,S}$ is 
	\begin{equation} 
		F_n^y(x) = \frac{\varepsilon_n^3}{h_n} \bigl[ \bar{\nabla}_n^\ast (D_A W(\cdot,\bar{\nabla}_n y(\cdot)) )(x)
		+ g_n(x') \bigr]. \label{discrete-force}
	\end{equation}
	
	As we are ultimately interested in small displacements, we next introduce the quadratic forms of infinitesimal elasticity and their relaxations. These forms will also appear in our main theorem.
	For $A \in \R^{3 \times 8}$ let
	\[Q_{\cell}(A) = D^2W_{\cell}(Z)[A,A].\]
	
	We associate a relaxed quadratic form on $\R^{3 \times 8}$ given by
	\[
	Q_{\cell}^{\rel}(A) \coloneqq \min_{b \in \R^3} Q_{\cell}(A + (b \otimes e_3)Z).
	\]
	Since, as a consequence of \eqref{cellass5}, $Q_{\cell}$ is positive definite on $\left(\R^3 \otimes e_3\right)Z$,  for every $A \in \R^{3 \times 8}$ there is a unique $b(A) \in \R^3$ such that
	\[Q_{\cell}^{\rel} \left( A \right) = Q_{\cell}\left(A + \left(b(A) \otimes e_3\right)Z\right). \]
	This $b(A)$ is characterized by
	\begin{align}\label{eq:c-min-char}
	DQ_{\cell} \left( A + \left(b(A) \otimes e_3\right)Z \right) \colon \left(c \otimes e_3\right)Z = 0 
	\end{align}
	for every $c \in \R^3$, i.e. $DQ_{\cell} \left(A + \left(b(A) \otimes e_3\right) Z \right) \perp \left(\R^3 \otimes e_3\right)Z$. Moreover, the map $A \mapsto b(A)$ is linear. 
	Analogously let 
	\[ Q_{\surf}(A) = D^2W_{\surf}(Z^{(1)})[A,A] \]
	for $A \in \R^{3 \times 4}$.

	\subsection{Equations of motion}
	By $I_T$ we denote the interval $[0,T]$ for $T \in (0,\infty)$. If $T = \infty$ we set $I_T = [0,\infty)$. We assume that the dynamics of the atomistic particle system obeys Newton's second law of motion. In accordance with the small film height and the small body forces we also rescale time and obtain the following high-dimensional system of ordinary differential equations of motion
	\begin{equation}
		h_n^2 \partial_t^2 y_n(t,x) = -\bar{\nabla}_n^\ast \left( D_A W \left(\cdot, \bar{\nabla}_n y_n(t,\cdot)\right) \right)(x) + h_n^3 g(t,x') e_3,\label{strong-solution}
	\end{equation}
	for an atomistic deformation $y_n \colon I_T \times \tilde{\Lambda}_{n,S} \to \R^3$ with $y_n(\cdot,x) \in H^{2}_{\mathrm{loc}}(I_T)$ for all $x \in \tilde{\Lambda}_{n,S}$. 
	After extension to $\tilde{\Lambra}_n$, the weak form of this equation (which in the discrete setting is even equivalent) reads as follows:
	Testing with $\varphi \colon I_T \times \tilde{\Lambra}_n \to \R^3$ such that $\varphi(\cdot, x) \in H_0^1(I_T)$ for every $x \in \tilde{\Lambda}_{n,S}$ and $\varphi(\cdot,x) = 0$ whenever $x \in \tilde{\Lambra}_n \setminus \tilde{\Lambda}_{n,S}$ yields
    \begin{align}
		0 &= 
		h_n^2 \int_0^{T'} \frac{\varepsilon_n^3}{h_n} \sum_{x \in \tilde{\Lambra}_n} \partial_t y_n(t,x) \cdot \partial_t \varphi(t,x) \ dt \nonumber \\
        &\qquad - \int_0^{T'} \frac{\varepsilon_n^3}{h_n} \sum_{x \in \tilde{\Lambra}^\prime_n} D_A W\left( x, \bar{\nabla}_n y_n(t,x) \right) \colon \bar{\nabla}_n \varphi(t,x) \ dt \nonumber  \\
		&\qquad+ \int_0^{T'} \frac{\varepsilon_n^3}{h_n} \sum_{x \in \tilde{\Lambra}_n}  h_n^3 g(t,x') \varphi_3(t,x) \ dt. \label{dyn_weak_sltn}
    \end{align}

	In the continuum setting we are led to considering the  time-dependent von Kármán equations. In the following we will work with the spaces $L^p_{\mathrm{loc}}(I_T;X)$, $p \in [1,\infty]$, for different Banach spaces $X$. In particular if $T < \infty$ the space $L^p_{\mathrm{loc}}(I_T;X)$ agrees with the space $L^p(I_T;X)$.
	We have to distinguish between $\nu_n \to \infty$ and $\nu_n \equiv \nu \in \N$. To make sense of the upcoming definitions we assume that the force term is sufficiently regular to have suitable integrability as well as point evaluation. This is certainly the case for $g \in L^2_\mathrm{loc}(I_T; W^{1,\infty}(S))$.
	In Section~\ref{subsec:Interpolation-schemes} below we will introduce a particularly convenient interpolation scheme which associates to any (rescaled) deformation $y_n : I_T \times \tilde{\Lambra}_n \to \R^3$ an interpolation $\tilde{y}_n \in L^\infty_{\mathrm{loc}}(I_T; H^1_{\mathrm{loc}}(\R^2 \times (0,1); \R^3))$. We then consider the rescaled in plane and out-of-plane components 
	\begin{align}
		u_n(t,x') 
		&\coloneqq h_n^{-2} \int_0^1 \big( {\tilde{y}_n^\prime}(t,x',x_3) - x' \big) \ dx_3, \label{eq:un-def} \\
		v_n(t,x') 
		&\coloneqq h_n^{-1} \int_0^1 \big(\tilde{y}_n (t,x',x_3) \big)_3 \ dx_3. \label{eq:vn-def} 
	\end{align}

	For given $u \in H^1_0(S;\R^2)$ and $v \in H^2_0(S)$ we consider the first and second order strain terms 
	\begin{align*}
	 G_1^{\sym} 
	 = \sym \nabla' u + \frac{1}{2} \nabla' v \otimes \nabla' v, \quad 
	 G_2 
	 = - \nabla'^2 v \quad(\in \R^{2 \times 2}) 
	\end{align*}
	With the help of the $3 \times 8$ matrices 
	\begin{align*}
	 Z_- = (-Z^{(1)}, Z^{(2)})
	 \quad\text{and}\quad 
	 M = \frac{1}{2} e_3 \otimes (+1,-1,+1,-1,+1,-1,+1,-1)  
	\end{align*}
	we then define `von Kármán superposition operators' 
	\begin{align*} 
	 \mathcal{B}^\nu_{{\rm vK},1} 
	 &: H^1_0(S;\R^2) \times H^2_0(S) \times H^1_0(S) \to L^1(S), \\ 
	 \mathcal{B}^\nu_{{\rm vK},2} 
	 &: H^1_0(S;\R^2) \times H^2_0(S) \times H^2_0(S;\R^2) \to L^1(S)	 
	\end{align*}
    as follows.  
	In case $\nu_n \to \infty =: \nu$ we consider 
    \begin{align*}
	 \mathcal{B}^\infty_{{\rm vK},1}(u, v, \phi)
	 &:= \frac{1}{2}DQ_{\cell}^{\rel} ( \hat{G}_1^{\sym} Z) \colon (\nabla' v \otimes \nabla' \phi)\widehat{\phantom{i}} \, Z + \frac{1}{24} DQ_{\cell}^{\rel} \left( \hat{G}_2 Z \right) \colon (\nabla'^2 \phi )\widehat{\phantom{i}} \, Z \\  
     \shortintertext{and}
     \mathcal{B}^\infty_{{\rm vK},2}(u, v, \Psi)
	 &:= \frac{1}{2}DQ_{\cell}^{\rel} (\hat{G}_1^{\sym} Z) \colon (\nabla' \Psi)\widehat{\phantom{i}} \, Z \ , 
	\end{align*}
	whereas for $\nu_n \equiv \nu \in \{2,3,\ldots\}$ we set 
	\begin{align*}
	 \mathcal{B}^\nu_{{\rm vK},1}(u, v, \phi)
	 &:=  \frac{1}{2} DQ_{\cell}^{\rel} \left(\hat{G}_1^{\sym} Z + \frac{1}{2(\nu-1)} ( \hat{G}_2 Z_- + \partial_{12} v \, M) \right) \nonumber \\ 
     &\qquad \qquad \qquad \colon 
	 \left( (\nabla' v \otimes \nabla' \phi)\widehat{\phantom{i}} \, Z 
	 - \frac{1}{2(\nu - 1)}( (\nabla'^2 \phi)\widehat{\phantom{i}} \, Z_-
	 + \partial_{12} \phi \, M) \right) \nonumber \\
	 &\qquad - \frac{\nu (\nu - 2)}{24(\nu - 1)^2} DQ_{\cell}^{\rel} (\hat{G}_2 Z) \colon (\nabla'^2 \phi )\widehat{\phantom{i}} \, Z  \nonumber \\
	 &\qquad + \frac{1}{\nu - 1} DQ_{\surf} \left( \hat{G}_1^{\sym} Z^{(1)} + \frac{1}{2(\nu-1)} \partial_{12} v \, M^{(1)} \right) \nonumber \\ 
     &\qquad \qquad \qquad \qquad \colon 
	 \left( (\nabla' v \otimes \nabla' \phi)\widehat{\phantom{i}} \, Z^{(1)} 
	 + \frac{1}{2(\nu - 1)} \partial_{12} \phi \, M^{(1)} \right) \nonumber \\
	 &\qquad - \frac{1}{4(\nu - 1)} DQ_{\surf} ( \hat{G}_2 Z^{(1)} ) \colon (\nabla'^2\phi)\widehat{\phantom{i}} Z^{(1)} \\  
     \shortintertext{and}
     \mathcal{B}^\nu_{{\rm vK},2}(u, v, \Psi)
	 &:= \frac{1}{2} DQ_{\cell}^{\rel} \left(\hat{G}_1^{\sym} Z + \frac{1}{2(\nu-1)} ( \hat{G}_2 Z_- +\partial_{12} v \, M) \right) \colon 
     (\nabla' \Psi)\widehat{\phantom{i}} \, Z \nonumber \\
	 &\qquad+ \frac{1}{\nu - 1}  DQ_{\surf} \left( \hat{G}_1^{\sym} Z^{(1)} + \frac{1}{2(\nu-1)} \partial_{12} v \, M^{(1)} \right) \colon (\nabla' \Psi)\widehat{\phantom{i}} \, Z^{(1)}. 
	\end{align*}
	
    \begin{definition}
		Let $\nu \in \{2,3,\ldots,\infty\}$. We say a pair $(u,v)$ is a weak solution to the dynamic von Kármán equations if $u \in L^\infty_{\mathrm{loc}}(I_T; H^1_0(S;\R^2) )$, $v \in L^\infty_{\mathrm{loc}}(I_T;H^2_0(S)) \cap W^{1,\infty}_{\mathrm{loc}}(I_T;L^2(S)) $ and for every $T' \in I_T$ the following two equations are satisfied:
		\begin{align}
		\int_{0}^{T'} \int_S \left( 
		\partial_t v \, \partial_t \phi   
		- \mathcal{B}^\nu_{{\rm vK},1}(u, v, \phi) 
		+ g \, \phi \right) \ dx' \ dt = 0 \label{dyn_weakform1}
		\end{align}
		for every $\phi \in L^2(0,T'; H_0^2(S)) \cap H_0^1(0,T';L^2(S))$, and
		\begin{align}
			\int_0^{T'} \int_S \mathcal{B}^\nu_{{\rm vK},2}(u, v, \Psi)  \ dx' \ dt = 0 \label{dyn_weakform2} 
		\end{align}
		for every $\Psi \in L^2(0,T'; H_0^1(S; \R^2))$.
	\end{definition}

    \begin{remark}
    $Q_{\cell}^{\rel}$ induces a quadratic form on $\R^{2 \times 2}$ via $Q_2(A) = Q_{\cell}^{\rel}(\hat{A}Z)$ for $A \in \R^{2 \times 2}$. This is precisely the quadratic form of the classical von Kármán energy functional, cf.\ \cite{FrieseckeJamesMueller_06}. Its linearization $\mathcal{L}_2 = \frac{1}{2}Q_2$ satisfies $\mathcal{L}_2 A : B = \frac{1}{2}DQ_{\cell}^{\rel} (\hat{A} Z) \colon \hat{B} \, Z$ for $A, B \in \R^{2 \times 2}$. This shows that \eqref{dyn_weakform1} and \eqref{dyn_weakform2} are indeed the classical von Kármán equation if $\nu=\infty$, cp.\ \cite{MuellerPakzad_08,AbelsMoraMueller_09}. 
    \end{remark}

\subsection{Main result}

Now we have all the ingredients to state our main result.

\begin{theorem}\label{thm: main-td}
	Let $y_n^{(0)}, y_n^{(1)} : \tilde{\Lambra}_n \to \R^3$ be two sequences of lattice deformations satisfying 
	\begin{align}
		\frac{1}{2} \frac{\varepsilon_n^3}{h_n} \sum_{x \in \tilde{\Lambra}_n} \vert y_n^{(1)}(x)\vert^2 + \frac{\varepsilon_n^3}{h_n} \sum_{x \in \tilde{\Lambra}^\prime_n} W(x, \bar{\nabla}_n y_n^{(0)}(x))  \leq C h_n^4. \label{energy_ineq2}
	\end{align}
	as well as the boundary and compatibility conditions
	\begin{align*}
		y_n^{(0)}(x) = \matr{x' \\ h_n\left(x_3 - \frac{1}{2}\right)} \quad\text{and}\quad 
        y_n^{(1)}(x) = 0 
        \quad \text{for} \ x \in \tilde{\Lambra}_n \setminus \tilde{\Lambda}_{n,S}.
	\end{align*}
	Let $T \in (0,\infty]$ and $g \in L^2(I_T; W^{1,\infty}(S)) \cap C^0(I_T, L^\infty(S))$. Let $(y_n)$ be a sequence of solutions to the system of ordinary differential equations
	\begin{align}
		\begin{cases}
			h_n^2 \partial_t^2 y_n(t,x)  = - \bar{\nabla}_n^\ast \left( D_A W \left(\cdot, \bar{\nabla}_n y_n(t,\cdot)\right) \right)(x)  + h_n^3 g(t,x')e_3 \quad & \text{in} \ (0,T_n) \times \tilde{\Lambda}_{n,S}, \ \\
			y_n(t,x) = \left(x', h_n (x_3 - \frac{1}{2})\right) \quad &\text{on} \ (0,T_n) \times (\tilde{\Lambra}_n \setminus \tilde{\Lambda}_{n,S}) \\
			y_n(0,x) = y_n^{(0)}(x)  \quad &\text{in} \ \tilde{\Lambra}_n,\\
			\partial_t y_n(0,x) = h_n^{-1}y_n^{(1)}(x) \quad &\text{in} \ \tilde{\Lambra}_n,
		\end{cases} \tag{ODE} \label{ode}
	\end{align}
	where $T_n$ is the maximal time of existence. Then $T_n = T$ for all $n \in \N$.
	
	There exist $u \in L^\infty_{\mathrm{loc}}(I_T;H^1(S;\R^2))$ and
	$v \in L^\infty_{\mathrm{loc}}(I_T;H^2(S)) \cap W^{1,\infty}_\mathrm{loc}(0,T;L^2(S))$ such that, up to a subsequence, $u_n$ and $v_n$ as defined in \eqref{eq:un-def} and \eqref{eq:vn-def} satisfy
	\begin{align}
		u_n \overset{*}{\rightharpoonup} u \quad &\text{in} \ L^\infty_{\mathrm{loc}}(I_T; H^1(S;\R^2)), \label{conv-u-td} \\
		v_n \to v \quad &\text{in} \ L^\infty_{\mathrm{loc}}(I_T;L^2(S)), \label{conv-v-td} \\
		\partial_t v_n \overset{*}{\rightharpoonup} \partial_t v \quad &\text{in} \ L^\infty_{\mathrm{loc}}(I_T;L^2(S)) \label{conv-dv-td} . 
	\end{align}
	The maps $u$ and $v$ satisfy the boundary conditions
	\begin{equation}
		u_{|{\partial S} } = 0, \ v_{|{\partial S} } = 0, \ \nabla' v_{|{\partial S} } = 0.  \label{bc-td-limiting}
	\end{equation}
	Moreover the mappings
	\begin{align*}
		t \mapsto v(t): \quad & I_T \to H^2(S), \\
		t \mapsto \partial_t v(t): \quad & I_T \to L^2(S)
	\end{align*}
	are weakly continuous.
	The pair $(u,v)$ satisfies the equations \eqref{dyn_weakform1} and \eqref{dyn_weakform2} as well as the initial conditions
	\begin{align}
		v(0, x') = y_3^{(0)}(x'), \label{init_cond_limit_1} \\
		\partial_t v(0, x') = y_3^{(1)}(x')  \label{init_cond_limit_2} 
	\end{align}
	for almost every $x' \in S$,
	where
	\begin{align*}
		\frac{1}{h_n} \int_0^1 \left(\tilde{y}_n^{(0)} (\cdot, x_3)\right)_3 \ dx_3 
        &\to y_3^{(0)} \quad \text{in} \ H^1(S), \\
		\frac{1}{h_n^2} \int_0^1 \left(\tilde{y}_n^{(1)} (\cdot, x_3)\right)_3 \ dx_3 
        &\rightharpoonup y_3^{(1)} \quad \text{in} \ L^2(S).
	\end{align*}
\end{theorem}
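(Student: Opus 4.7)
The plan follows the continuum strategy of \cite{AbelsMoraMueller_09}, adapted to the atomistic setting and incorporating the discrete strain-identification techniques of \cite{BraunSchmidt2022}. First I would establish global existence together with a priori energy bounds. By the $C^1$-regularity and linear-growth derivative \eqref{derivative-est}, the right-hand side of \eqref{ode} is locally Lipschitz on energy sublevel sets, so a maximal solution exists on $[0,T_n)$. Testing \eqref{strong-solution} against $\partial_t y_n$ gives a discrete energy identity; combined with \eqref{energy_ineq2} and a Grönwall argument using $g\in L^2_{\mathrm{loc}}(I_T;L^\infty(S))$ it yields the uniform bound
\[\frac{h_n^2}{2}\frac{\varepsilon_n^3}{h_n}\sum_{\tilde{\Lambra}_n}|\partial_t y_n|^2 + \frac{\varepsilon_n^3}{h_n}\sum_{\tilde{\Lambra}_n^\prime} W(\cdot,\bar{\nabla}_n y_n)\leq Ch_n^4 \quad\text{for }t\in[0,T_n),\]
which precludes finite-time blow-up, so $T_n=T$.

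The coercivity \eqref{cellass5} then gives $\dist(\bar{\nabla}_n y_n, SO(3)Z)\leq Ch_n^2$ in the discrete $L^2$ sense uniformly in $t$. A discrete geometric rigidity estimate in the spirit of \cite{FrieseckeJamesMueller_02,Schmidt:08,BraunSchmidt2022} produces approximate rotation fields $R_n(t,x')$ with $\|\bar{\nabla}_n y_n - R_n Z\|_{L^2}\leq Ch_n^2$ and $\|\nabla' R_n\|_{L^2}\leq Ch_n$. A discrete Korn inequality together with averaging in $x_3$ then provides uniform estimates on $\|u_n\|_{H^1}$, $\|v_n\|_{H^2}$ and $\|\partial_t v_n\|_{L^2}$ in $L^\infty_{\mathrm{loc}}(I_T)$, from which \eqref{conv-u-td}--\eqref{conv-dv-td} follow by subsequence extraction and Aubin--Lions. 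The Dirichlet boundary conditions \eqref{bc-td-limiting} are inherited from the identity extension of $y_n$ outside $\tilde{\Lambda}_{n,S}$.

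The central step, and the principal obstacle, is the identification of the limit equations. Introducing the scaled strain $G_n:= h_n^{-2}(R_n^T\bar{\nabla}_n y_n - Z)$, which is bounded in $L^2$, a careful discrete computation analogous to the static analysis of \cite{BraunSchmidt2022} shows $G_n\rightharpoonup G$ in $L^2$, with $G$ explicitly expressed through $G_1^{\sym} = \sym\nabla'u + \frac{1}{2}\nabla'v\otimes\nabla'v$, $G_2 = -\nabla'^2v$ and, in the ultrathin case $\nu_n\equiv\nu$, the residual discrete $x_3$-modes encoded by $Z_-$ and $M$. Identifying the nonlinear term $\nabla'v\otimes\nabla'v$ requires strong $L^2$-convergence of $\nabla'v_n$, which I would extract from \eqref{conv-v-td} by interpolation with the uniform $H^2$-bound. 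I would then test \eqref{dyn_weak_sltn} with appropriate Kirchhoff-type rescaled test functions of the form $\phi(t,x')e_3$ plus $x_3$-linear corrections, respectively in-plane functions $(\Psi(t,x'),0)$, each scaled so that the kinetic term converges to $\int\partial_t v\,\partial_t\phi$ in the case of \eqref{dyn_weakform1}, or vanishes by the bound $\|\partial_t y_n\|_{L^2}=O(h_n)$ in the case of \eqref{dyn_weakform2}. The stress term is then handled via the Taylor expansion
\[ D_AW(x,\bar{\nabla}_n y_n) = h_n^2\,D^2W(x,Z)[G_n] + o(h_n^2) \]
around $Z$, respectively $Z^{(1)}$, using the $C^2$-regularity there and controlling the remainder on the set where $\bar{\nabla}_n y_n$ is far from $SO(3)Z$ via \eqref{derivative-est} together with the $L^2$-strain bound. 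Inserting the optimal relaxation vector $b(G_n)$ from \eqref{eq:c-min-char} and integrating in $x_3$ by summation by parts assembles the bulk and, when $\nu_n\equiv\nu$, the surface contributions precisely into $\mathcal{B}^\nu_{{\rm vK},1}$ and $\mathcal{B}^\nu_{{\rm vK},2}$.

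Finally, the initial conditions \eqref{init_cond_limit_1}--\eqref{init_cond_limit_2} are obtained by passing to the limit in the discrete initial data using the $W^{1,\infty}(I_T;L^2)$-regularity of $v_n$, and the weak continuity of $t\mapsto v(t)$ in $H^2$ and $t\mapsto\partial_t v(t)$ in $L^2$ follows from \eqref{dyn_weakform1} by a standard density argument. The hardest part is thus the sharp control of discrete finite differences of $y_n$ in space \emph{and} time needed in the third step, in particular to isolate the novel surface and residual-layer contributions in the ultrathin regime $\nu_n\equiv\nu$ that distinguish the limit from the classical von Kármán system.
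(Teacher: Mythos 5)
Your strategy coincides with the paper's at every major junction: the energy identity plus Gronwall for global existence, rigidity/Korn for compactness of $u_n,v_n$, identification of the discrete strain limit as in the static analysis of \cite{BraunSchmidt2022}, linearization of the stress via the growth condition \eqref{derivative-est}, and testing with $\phi\, e_3$ and $x_3$-linear in-plane corrections. Two points are, however, genuinely underdeveloped. First, the initial condition \eqref{init_cond_limit_2} does not follow from ``passing to the limit in the discrete initial data using the $W^{1,\infty}(I_T;L^2)$-regularity of $v_n$''. That regularity controls $v_n$ itself and suffices for \eqref{init_cond_limit_1}, but to evaluate $\partial_t v_n$ at $t=0$ in the limit you need equicontinuity in time of $\partial_t v_n$ in some negative Sobolev space, i.e.\ a uniform-in-$n$ bound on second time derivatives. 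The paper obtains this by pairing the equations of motion simultaneously with $h_n^{-1}\phi\,e_3$ and with $(x_3-\frac12)(\nabla'\phi,0)$, deriving the estimate \eqref{H-5_bound} for $\partial_t^2 v^n+\sum_\alpha\partial_t^2\partial_\alpha q^n_\alpha$ in $L^2(0,T';H^{-5}(S))$ --- the first moment $q^n$ cannot be avoided here --- and then invoking the compact embedding $L^\infty(0,T';H^{-1})\cap H^1(0,T';H^{-5})\hookrightarrow C([0,T'];H^{-5})$. Without an argument of this type the identification of $\partial_t v(0)$ is missing; the weak continuity of $t\mapsto\partial_t v(t)$ at the discrete level rests on the same bound, not only on the limiting equation \eqref{dyn_weakform1}.

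Second, you propose to ``insert the optimal relaxation vector $b(G_n)$'' to produce $DQ_{\cell}^{\rel}$. The relaxed form cannot simply be inserted: one must prove that the limiting stress $\tilde J$ is orthogonal to $(\R^3\otimes e_3)Z$, and this orthogonality is itself a consequence of the equations of motion (Proposition~\ref{prop: orth-tilde-J}, obtained by testing with $\int_0^{x_3}\Psi\,d\xi_3$, respectively with layerwise interpolants, and multiplying the equation by $h_n$); only then do Lemma~\ref{lem: derivative-relaxed} and Lemma~\ref{lemma: surf-to-bulk-orth} convert $D^2W_{\cell}(Z)[\bar G]$ into $DQ_{\cell}^{\rel}$ of the reduced strain. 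Apart from these two gaps the proposal follows the paper's route, including the commutator between the full discrete gradient pairing and the first-moment pairing that generates the $\partial_{12}v\,M$ contributions in the ultrathin case, which you correctly single out as the delicate step.
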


    \begin{remark}\label{rmk:global-existence}
    Theorem~\ref{thm: main-td} entails global existence results for the classical von Kármán equation (which is well known, cp.\ \cite{KochStahel_93}) as well as the novel equation for ultrathin layers. Indeed, for given $y_3^{(1)} \in L^2(S)$ one can choose $\bar{y}_n^{(1)} : \tilde{\Lambra}_n \to \R^3$, not depending on the $x_3$-variable and vanishing on $\tilde{\Lambra}_n \setminus \tilde{\Lambda}_{n,S}$, such that its piecewise constant interpolation (see Section~\ref{subsec:Interpolation-schemes}) satisfies $\bar{y}_n^{(1)} \to y_3^{(1)}$ in $L^2$ and then set $y_n^{(1)} = h_n^2\bar{y}_n^{(1)}$. For given $y_3^{(0)} \in H^2_0(S)$ one may choose $y_n^{(0)}$ as a recovery sequence for the limiting out-of-plane variable $y_3^{(0)}$ and in-plane variable $0$ in the discrete-to-continuum $\Gamma$-convergence result proved in \cite{BraunSchmidt2022}, remarking that by density of $C^\infty_c(S)$ in $H^2_0(S)$ and construction one finds $y_n^{(0)}(x) = (x', h_n(x_3 - \frac{1}{2})$ for $x \in \tilde{\Lambra}_n \setminus \tilde{\Lambda}_{n,S}$. With these choices, \eqref{energy_ineq2} holds true and the solution $(u,v)$ from Theorem~\ref{thm: main-td} satisfies \eqref{dyn_weakform1} and \eqref{dyn_weakform2} with \eqref{bc-td-limiting}, \eqref{init_cond_limit_1} and \eqref{init_cond_limit_2}. 
    \end{remark}

\begin{example}
    Our result covers basic mass-spring models with nearest and next-to-nearest neighbor interactions given by
	\begin{align*}
		E_\text{atom}(w) 
		= \frac{1}{2} \sum_{\stackrel{x,x' \in \Lambda_{n}}{\abs{x-x'} = \varepsilon_n} } V_{\mathrm{NN}} \left( \frac{w(x) - w(x')}{\varepsilon_n}\right)
		+ \frac{1}{2} \sum_{\stackrel{x,x' \in \Lambda_{n}} {\abs{x-x'} = \sqrt{2} \varepsilon_n} } V_{\mathrm{NNN}} \left( \frac{w(x) - w(x')}{\varepsilon_n}\right),
	\end{align*}
	where $V_{\mathrm{NN}}, V_{\mathrm{NNN}} \in C^2(\R)$ are pair potentials such that for suitable $C, c > 0$
	\begin{enumerate}
		\item[(i)] $V_{\mathrm{NN}}(1) = 0 = V_{\mathrm{NNN}}(\sqrt{2})$,
		\item[(ii)] $V_{\mathrm{NN}}(r) \ge c (1-r)^2$ and $V_{\mathrm{NNN}}(r) \ge c (\sqrt{2}-r)^2$ all $r \in \R$,
		\item[(iii)] $\norm{V_{\mathrm{NN}}^\prime}_\infty + \norm{V_{\mathrm{NNN}}^\prime}_\infty \leq C$. 
	\end{enumerate}
    \end{example}

\section{Preparations} \label{sec:preparations}

	\subsection{Interpolation schemes}\label{subsec:Interpolation-schemes}
	To pass from discrete to continuum objects we need to interpolate in a suitable way. We shortly discuss the two interpolation schemes considered in \cite{BraunSchmidt2022}. 
    Let $w: \Lambra_n \to \R^3$ be a lattice mapping. For each $x \in \Lambra_n^\prime$ we 
    extend $w$ to $Q_n(x)$ in the following way. First set 
	\[\tilde{w}(x) = \frac{1}{8} \sum_{j=1}^{8} w(x+\varepsilon_n z^i). \] 
	Then, for the center points $v_1, \ldots, v_6$ of the six faces $F_1, \ldots, F_6$ of $[-\frac{1}{2}, \frac{1}{2}]^3$, we define
	\[ \tilde{w}(x+ \varepsilon_n v_k) \coloneqq \frac{1}{4} \sum_j w(x+\varepsilon_n z^j), \] 
	where the sum runs over corners $z_j$ of the face with center $v^k$. 
	Finally we interpolate linearly on each of the 24 simplices
	\[ \text{co}\big(x, x + \varepsilon_n v^k, x + \varepsilon_n z^i, x + \varepsilon_n z^j\big) \] 
	with $|z^i - z^j| =1, |z^i - v^k| = |z^j - v^k| = \frac{1}{\sqrt{2}}$. This defines a piecewise affine mapping $\tilde{w} \in H^1_{\mathrm{loc}}(\R^2 \times [0,1];\R^3)$. We note that if $w(x + \varepsilon_n z^i) = A z^i + c$ for all $i = 1,\ldots,8$ for some $A \in \R^{3 \times 3}$ and $c \in \R^3$, then also $\tilde{w}(\xi) = A \xi + c$ for $\xi \in Q_n(x)$. $\tilde{w}$ moreover satisfies
	\begin{align}
		\tilde{w}(x) &= \dashint_{Q_n(x)} \tilde{w}(\xi) \ d \xi \label{dashint-cube}\\
	\shortintertext{for $x \in \Lambra_n^\prime$ and}
		\tilde{w}(x + \varepsilon_n v^k) &= \dashint_{x + \varepsilon_nF^k} \tilde{w}(\zeta) \ d \zeta \label{dashint-face}
	\end{align}
	for every face $x + \varepsilon_n F^k$ of $Q_n(x)$, $x \in \Lambra_n^\prime$.
	We also observe another averaging property of this interpolation. 
	Let $Q_n'(x') = x' + (-\frac{\eps_n}{2},\frac{\eps_n}{2})^2$. Then for each $x \in \Lambra_n^\prime$ one has
	\begin{align}\label{eq:plane-average}
	 \dashint_{Q_n'(x')} \tilde{w}(\xi', \xi_3) \ d \xi' 
	 = \frac{1}{8} \sum_{j=1}^{8} w(x+\varepsilon_n z^j) 
	 + \frac{\xi_3 - x_3}{4\varepsilon_n} \bigg( \sum_{j=5}^{8} w(x+\varepsilon_n z^j) - \sum_{j=1}^{4} w(x+\varepsilon_n z^j) \bigg) 
	\end{align}
    for $\xi_3 \in (x_3 - \frac{\eps_n}{2}, x_3 + \frac{\eps_n}{2})$. This can be seen, e.g., by considering the rotated versions $w_0,\ldots, w_3$ of $w$ on $Q_n(x)$: If $R_{x}$ is the rotation about  $90$ degrees around $x + \R e_3$, we let $w_j(\xi) = w(R_{x}^j \xi)$, $j = 0,\ldots,3$, $\xi \in x + \{-\frac{\varepsilon_n}{2},\frac{\varepsilon_n}{2}\}^3$. Then their average $w_{\rm av} = \frac{1}{4} (w_0 + \ldots + w_3)$, which satisfies 
    \begin{align*}
     w_{\rm av} (\xi)
     = \begin{cases}
		\frac{1}{4} \sum_{j=1}^{4} w(x+\varepsilon_n z^j), & \xi \in x + \varepsilon_n \{z^1, \ldots, z^4\}, \\
			\frac{1}{4} \sum_{j=5}^{8} w(x+\varepsilon_n z^j), & \xi \in x + \varepsilon_n \{z^5, \ldots, z^8\},
		\end{cases}
    \end{align*}
    equals the right hand side of \eqref{eq:plane-average}. As this term is affine, we even have that $\tilde{w}_{\rm av} (\xi)$ is equal to the right hand side of \eqref{eq:plane-average} for each $\xi \in Q_n(x)$. Now \eqref{eq:plane-average} follows from the observation that our interpolation scheme yields 
    \begin{align*}
        \dashint_{Q_n'(x')} \tilde{w}(\xi',\xi_3) \ d\xi'
		= \frac{1}{4} \sum_{i=0}^3 \dashint_{Q_n'(x')} \tilde{w}_j(\xi',\xi_3) \ d\xi'
		= \dashint_{Q_n'(x')} \tilde{w}_{\rm av}(\xi',\xi_3) \ dx' 
		= \tilde{w}_{\rm av}(\xi). 
    \end{align*}
    From Lemma 3.2 in \cite{BraunSchmidt2022} we infer that there are constants $c, C > 0$ such that for every $x \in \Lambra_n^\prime$
	\begin{align}\label{eq: norm-eq-single-cube}
	 c |\bar{\nabla} w(x)|^2 \leq \dashint_{Q_n(x)} |\nabla \tilde{w}(\xi)|^2 \ d \xi \leq C |\bar{\nabla} w(x)|^2.
	\end{align}

	For the second interpolation we let 
	\[\bar{w} (\xi) = w(x) \quad \text{for every} \ \xi \in x + \left(- \frac{\varepsilon_n}{2}, \frac{\varepsilon_n}{2}\right)^3, \ x \in \Lambra_n .\] 
	Here we obtain a piecewise constant function $\bar{w} \in L^2_{\mathrm{loc}}(\R^2 \times [-\frac{\varepsilon_n}{2},1 + \frac{\varepsilon_n}{2}]; \R^3)$. Both interpolations have their own advantages. The first interpolation allows for an application of the results in \cite{FrieseckeJamesMueller_06} while for the second one the discrete gradient can be extended to an almost everywhere defined piecewise constant function on $\R^2 \times (0,h_n)$. This works as follows: For $\xi \in Q_n(x)$, $ x \in \Lambra_n^\prime$, we have
	\[ \xi + \varepsilon_n z^i \in \left(  x + \varepsilon_n z^i \right) + \left( - \frac{\varepsilon_n}{2}, \frac{\varepsilon_n}{2} \right)^3. \]
	We set
	\begin{align*}
		\bar \partial_i \bar w(\xi) := \bar w (\xi + \varepsilon_n z^i) - \frac{1}{8} \sum_{j=1}^{8} \bar w (\xi + \varepsilon_n z^j) \\
		= w (x + \varepsilon_n z^i) - \frac{1}{8} \sum_{j=1}^{8}  w (x + \varepsilon_n z^j)  
	\end{align*}
	which results in
	\[\bar{\nabla} \bar{w}(\xi) = \bar{\nabla} w(x) \quad \text{whenever} \ \xi \in Q_n(x), x \in \Lambra_n^\prime.\]
	    
	While the linear interpolation results in a quite regular function the piecewise constant interpolation is tailor-made to pass from sums to integral terms.
	The rescaled versions of the interpolated functions are defined as
	\begin{align*}
		\tilde{y} = \tilde{w}(H_n \,\cdot\,) \in H^1_{\mathrm{loc}}(\Oxc;\R^3) 
        \quad \text{and} \quad 
        \bar{y} = \bar{w}(H_n \ \cdot \ )\in L^2_{\mathrm{loc}}(\Oxc_{\nu_n};\R^3) , 
    \end{align*} 
    where $\Ox := \R^2 \times (0,1)$ and $\Ox_{\mu} := \R^2 \times (-\frac{1}{2(\mu - 1)}, 1 + \frac{1}{2(\mu -1)})$. 
	It is not hard to see that there is a constant $C > 0$ independent of $w$ and $n$ such that 
	\begin{align}\label{eq: interpol-bounds}
	 \|  {\tilde{y}} \|_{L^2(\Ox;\R^3)} 
	 \le C \| {\bar{y}} \|_{L^2({\Ox_{\nu_n}};\R^3)} 
	 \quad\text{and}\quad 
	 \| {\bar{y}} \|_{L^2(\Ox_{\nu_n};\R^3)} 
	 \le C \| {\tilde{y}} \|_{L^2(\Ox;\R^3)} 
	 \end{align}
	 (being finite whenever $w(x) = 0$ outside a bounded set). The correspondingly rescaled gradients will be denoted $\nabla_n \tilde{y} = (\nabla'\tilde{y}, h_n^{-1}\partial_3\tilde{y})$, where $\nabla'=(\partial_1,\partial_2$).
			
	Next we make precise what it means that a sequence of functions $f_n : \Lambra_n \to \R^m$ defined on the atomistic lattice converges to a corresponding limiting function. Natural choices for a continuum function space are 
	\begin{align*}
	 L^2_{\mathrm{loc}} \big( \Oxc;\R^m \big) 
	 \qquad \text{and} \qquad 
	 L^2_\mathrm{loc} \big( \R^2 \times \big\{0, \tfrac{1}{\nu-1}, \ldots, \tfrac{\nu-2}{\nu-1}, 1\big\};\R^m \big) 
    \end{align*}
    for thin films with $\nu_n \to \infty$, respectively, ultrathin films with $\nu_n \equiv \nu$. In the second case we extend a function $f$ that belongs to this limiting space in between the layers in two different ways: Its interpolation $f \in L^2_{\mathrm{loc}}(\Oxc;\R^m)$ which is continuous in $x_3$ and affine in $x_3$ on the intervals $(\frac{i-1}{\nu-1}, \frac{i}{\nu-1})$, $i = 1, \ldots, \nu-1$, is denoted by the same symbol whereas we write $f^* \in L^2_{\mathrm{loc}}(\Oxc_{\nu};\R^m)$ for its extension to a function which is piecewise constant in $x_3$ such that $f^*(x) = f(x', \frac{i}{\nu-1})$ whenever $x_3 \in (\frac{2i-1}{2(\nu-1)},\frac{2i+1}{2(\nu-1)})$, $i = 0, \ldots, \nu - 1$.

    The following result is observed in \cite{Schmidt:06,BraunSchmidt2022}, a detailed proof may be found in \cite{Buchberger:24}.
	\begin{proposition}\label{prop: equiv-convergences}
		Let $f_n : \tilde{\Lambra}_n \to \R^m$. 
		\begin{enumerate}
		 \item[a)] Assume $\nu_n \to \infty$. For $f \in L^2_{\mathrm{loc}}(\Oxc;\R^m)$ we have 
		 \begin{align*}
		 	\bar f_n \to f \ \text{in} \ {L^2_{\mathrm{loc}}(\Oxc; \R^m)} 
			~\iff~ 
			\tilde{f}_n \to f \ \text{in} \ {L^2_{\mathrm{loc}}(\Oxc; \R^m)}. 	 
	 	\end{align*}
        \end{enumerate}
        \begin{enumerate}
		 \item[b)] Assume $\nu_n \equiv \nu \in \N$. For $f \in L^2_{\mathrm{loc}} (\R^2 \times \{0, \tfrac{1}{\nu-1}, \ldots, \tfrac{\nu-2}{\nu-1}, 1\};\R^m) $ we have
		 \begin{align*}
		 	\bar f_n \to f^* \text{ in } L^2_{\mathrm{loc}} (\R^2 \times \{0, \tfrac{1}{\nu-1}, \ldots, \tfrac{\nu-2}{\nu-1}, 1\};\R^m) 
		 	~\iff~
		 	\tilde{f}_n \to f \ \text{in} \ {L^2_{\mathrm{loc}}(\Oxc; \R^m)}.
		 \end{align*}
        \end{enumerate}
	\end{proposition}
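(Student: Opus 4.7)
The argument is a density reduction based on the $L^2$-equivalence \eqref{eq: interpol-bounds}. Since both interpolation operators $f_n \mapsto \bar f_n$ and $f_n \mapsto \tilde f_n$ are linear in the lattice data $f_n$, my plan is to first verify the claimed convergence on smooth test data and then transfer to general $f$ by approximation applied to the lattice differences.

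\textbf{Smooth case.} In case (a) take $\varphi \in C_c^\infty(\Ox;\R^m)$; in case (b) take smooth layer data $\varphi^0,\dots,\varphi^{\nu-1}\in C_c^\infty(\R^2;\R^m)$ and let $\varphi$ and $\varphi^*$ denote the associated affine-in-$x_3$ and piecewise-constant-in-$x_3$ extensions. Sampling on the lattice by $\varphi_n(x) := \varphi(x)$ in case (a) (respectively $\varphi_n(x',k/(\nu-1)) := \varphi^k(x')$ in case (b)), uniform continuity yields $\bar\varphi_n \to \varphi$ in case (a) and $\bar\varphi_n \to \varphi^*$ in case (b) in $L^2_\mathrm{loc}$, since in case (b) the atomic layers sit exactly at the heights $k/(\nu-1)$. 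For $\tilde\varphi_n$ I would invoke the horizontal-average identity \eqref{eq:plane-average}: it shows that $\dashint_{Q_n'(x')} \tilde\varphi_n(\xi',\xi_3)\,d\xi'$ is exactly affine in $\xi_3$ between the averages of the four bottom and four top face corners, which together with the smoothness of the data identifies $\varphi$ (the affine extension) as the $L^2_\mathrm{loc}$ limit of $\tilde\varphi_n$ in both cases.

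\textbf{Density reduction.} Now assume $\bar f_n \to f$ (case a) or $\bar f_n \to f^*$ (case b). Given $\delta>0$ and a ball $B_R \subset \R^3$, density of $C_c^\infty$ allows us to pick smooth data such that $\|\varphi - f\|_{L^2(B_R \cap \Ox)} < \delta$ and, in case (b), also $\|\varphi^* - f^*\|_{L^2(B_R\cap \Ox_\nu)} < \delta$; both quantities are simultaneously controlled because they are equivalent to the $\ell^2$-norm of the layer-data approximation $(\varphi^k - f^k)_k$. Setting $\psi_n := f_n - \varphi_n$, linearity gives $\bar\psi_n = \bar f_n - \bar\varphi_n$ and $\tilde\psi_n = \tilde f_n - \tilde\varphi_n$, and \eqref{eq: interpol-bounds} applied to $\psi_n$ on a slightly enlarged ball $B_R'$ yields
\[
 \|\tilde f_n - \tilde\varphi_n\|_{L^2(B_R \cap \Ox)} \le C\, \|\bar f_n - \bar\varphi_n\|_{L^2(B_R' \cap \Ox_{\nu_n})}.
\]
Combining with the smooth case and the triangle inequality produces $\limsup_n \|\tilde f_n - f\|_{L^2(B_R\cap\Ox)} \le C'\delta$; since $\delta$ is arbitrary, $\tilde f_n \to f$ in $L^2_\mathrm{loc}(\Ox)$. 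The reverse implication follows by the symmetric argument, using the other direction of \eqref{eq: interpol-bounds}.

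\textbf{Main obstacle.} The delicate step is the smooth case in (b), where the vertical mesh spacing $1/(\nu-1)$ is held fixed while only $\varepsilon_n \to 0$, so the passage between the piecewise-constant-in-$x_3$ limit of $\bar\varphi_n$ and the affine-in-$x_3$ limit of $\tilde\varphi_n$ is not a matter of mesh refinement. The averaging identity \eqref{eq:plane-average}, which reflects the precise structure of the 24-simplex interpolation on each dual cube, is exactly what is needed to pin down affine interpolation (and not some other functional of the layer data) as the limit of $\tilde\varphi_n$. Once this smooth case is established, the density argument is routine.
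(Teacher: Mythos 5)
Your argument is correct in outline, but note that the paper does not actually prove this proposition: it is deferred to \cite{Schmidt:06,BraunSchmidt2022,Buchberger:24}. The closest in-paper argument is the proof of the weak-$\ast$ analogue, Lemma~\ref{lemma: weak-*-interpolations}, which proceeds quite differently: there the authors test the \emph{difference} $\tilde f_n - \bar f_n$ against indicators of cubes and kill it via the averaging identities \eqref{dashint-cube} and \eqref{eq:plane-average} plus a boundary-layer estimate, a route tailored to weak convergence (and which cannot give part (b) of the present statement in the strong topology, since $\tilde f_n$ and $\bar f_n$ then have genuinely different limits $f$ and $f^*$). Your density reduction — verify the claim on smooth sampled data, then transfer to general $f$ by applying the norm equivalence \eqref{eq: interpol-bounds} to the lattice differences $\psi_n = f_n - \varphi_n$ — is the natural strategy for strong convergence and is sound. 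Two steps deserve an explicit line of justification. First, \eqref{eq: interpol-bounds} is stated globally; the localized version on $B_R$ versus a $2\eps_n$-enlargement $B_R'$ does hold, because both interpolations are cell-local (each value of $\tilde f_n$ or $\bar f_n$ on a dual cell is a fixed linear function of the eight corner values, and conversely a vertex value of a piecewise affine function is controlled by its $L^2$ norm on the adjacent simplices), but this should be said. Second, in the smooth case of (b), \eqref{eq:plane-average} only identifies the horizontal averages of $\tilde\varphi_n$ as affine in $x_3$; to get $L^2_{\mathrm{loc}}$ convergence of $\tilde\varphi_n$ itself you still need to control the horizontal oscillation within each cell, which is $O(\eps_n)$ by smoothness of the layer data. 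A cleaner alternative here is to observe that the scheme reproduces data of the form $c + b z_3$ exactly, so the interpolation of layerwise-constant data \emph{is} the affine-in-$x_3$ extension, and $\varphi_n$ differs from such data by $O(\eps_n)$ in sup norm on each cell.
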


	The next lemma is a version of Proposition~\ref{prop: equiv-convergences} for weak-$\ast$-convergence in a time dependent setting.
	\begin{lemma}\label{lemma: weak-*-interpolations}
		Let $f_n: [0,T] \times \tilde{\Lambra}_n \to \R^m$.
		\begin{enumerate}
			\item[a)] Assume $\nu_n \to \infty$. For $f \in L^{\infty}(0,T; L^2_\mathrm{loc}(\Oxc;\R^m))$ we have 
			\begin{align*}
			{\bar{f}}_n \overset{\ast}{\rightharpoonup} f \text{ in } L^\infty(0,T; L^2_\mathrm{loc}(\Oxc;\R^m) ) 
			~\iff~ 
			{\tilde{f}}_n \overset{\ast}{\rightharpoonup} f \text{ in } L^\infty(0,T; L^2_\mathrm{loc}(\Oxc;\R^m)) .
            \end{align*}
			\item[b)] Assume $\nu_n \equiv \nu \in \N$. For $f \in L^{\infty}(0,T; L^2_\mathrm{loc} (\R^2 \times \{0, \tfrac{1}{\nu-1}, \ldots, \tfrac{\nu-2}{\nu-1}, 1\};\R^m))$ we have
			\begin{align*}
			&{\bar{f}}_n \overset{\ast}{\rightharpoonup} f^* \text{ in } L^{\infty}(0,T; L^2_\mathrm{loc} (\R^2 \times \{0, \tfrac{1}{\nu-1}, \ldots, \tfrac{\nu-2}{\nu-1}, 1\};\R^m)) \\ 
			&\quad \iff~ 
			{\tilde{f}}_n \overset{\ast}{\rightharpoonup} f \text{ in } L^\infty(0,T;L^2_\mathrm{loc}(\Oxc;\R^m))  .
            \end{align*}
        \end{enumerate}
	\end{lemma}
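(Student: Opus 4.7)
The plan is to combine the norm equivalence \eqref{eq: interpol-bounds} (to transfer uniform bounds) with a density argument on smooth test functions, yielding the weak-$*$ analogue of Proposition~\ref{prop: equiv-convergences}. The cell-averaging structure of the two interpolations is the key technical ingredient.

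First, applying \eqref{eq: interpol-bounds} to $f_n(t,\cdot)$ at each $t\in[0,T]$ and taking essential supremum in $t$ transfers uniform boundedness of $(\tilde f_n)$ in $L^\infty(0,T;L^2_{\mathrm{loc}}(\Ox))$ to uniform boundedness of $(\bar f_n)$ in $L^\infty(0,T;L^2_{\mathrm{loc}}(\Ox_{\nu_n}))$ and vice versa. Banach--Alaoglu then yields weak-$*$-convergent subsequences, and the task reduces to identification of the limits. Since smooth, compactly supported test functions are dense in the predual $L^1(0,T;L^2_{\mathrm{loc}})$, it suffices to test against $\phi\in C^\infty_c$.

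The central computation uses the cell-mean identity from \eqref{dashint-cube} (rescaled to the cell $\tilde Q_n(x)$), namely $\int_{\tilde Q_n(x)} \tilde f_n(t,\cdot)\,d\xi = \int_{\tilde Q_n(x)} \bar f_n(t,\cdot)\,d\xi$ on every interior cell. Subtracting the cell-mean $\bar\phi_n(t,x) := \dashint_{\tilde Q_n(x)}\phi(t,\cdot)\,d\xi$ inside each cell,
\[
\int_0^T \int (\tilde f_n - \bar f_n)\cdot\phi\,dx\,dt = \sum_x \int_0^T \int_{\tilde Q_n(x)} (\tilde f_n - \bar f_n)\cdot(\phi - \bar\phi_n)\,d\xi\,dt.
\]
A Poincaré estimate $\|\phi - \bar\phi_n\|_{L^\infty(\tilde Q_n(x))}\le C(\eps_n + \tfrac{1}{\nu_n-1})\|\nabla\phi\|_\infty$ combined with Cauchy--Schwarz and the uniform $L^2$-bound gives $|\int_0^T \int (\tilde f_n - \bar f_n)\cdot\phi\,dx\,dt| \le C(\eps_n + \tfrac{1}{\nu_n-1})$. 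In case (a), $\eps_n, \tfrac{1}{\nu_n-1}\to0$, so this difference vanishes, the weak-$*$ limits of $(\tilde f_n)$ and $(\bar f_n)$ coincide on smooth tests, and density plus uniform boundedness extend the equivalence to the full predual.

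Case (b) is the main obstacle: here $\tfrac{1}{\nu_n-1}=\tfrac{1}{\nu-1}$ is constant, the preceding estimate no longer closes, and the limits $f$ (affine in $x_3$ between layers) and $f^*$ (piecewise constant in $x_3$) are genuinely different but related. To handle it we test against the dense family of product functions $\phi(t,x) = \chi(t)\eta(x')\psi(x_3)$. The refined averaging identity \eqref{eq:plane-average} shows that $\dashint_{\tilde Q_n'(x')}\tilde f_n\,d\xi'$ is the $x_3$-affine interpolation between the layer averages, while $\dashint_{\tilde Q_n'(x')}\bar f_n\,d\xi'$ is the $x_3$-piecewise-constant extension of those same layer averages. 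Integrating against $\psi$ in $\xi_3$ by Fubini then reduces $\int\tilde f_n\phi\,dx$ to a layer-wise expression (up to an $O(\eps_n)$ $x'$-fluctuation controlled by the case-(a) argument), which equals $\int\bar f_n(\chi\eta\psi^\sharp)\,dx$ for a modified test function $\psi^\sharp$ encoding the adjoint of $x_3$-linear interpolation from layer values. Using $\bar f_n \stackrel{*}{\rightharpoonup} f^*$ then yields $\int\tilde f_n\phi \to \int f^*\chi\eta\psi^\sharp\,dx\,dt = \int f\chi\eta\psi\,dx\,dt$ by the defining identity of $\psi^\sharp$. The careful bookkeeping of $\psi^\sharp$ on the top and bottom half-slabs (where the lattice geometry differs) is the principal technical point.
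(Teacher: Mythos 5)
Your argument is correct and rests on the same ingredients as the paper's own proof: the norm equivalence \eqref{eq: interpol-bounds} for uniform boundedness, the coincidence of cell means of $\tilde f_n$ and $\bar f_n$ on dual cells (from \eqref{dashint-cube}) for part (a), and the in-plane averaging identity \eqref{eq:plane-average} to reduce part (b) to the layer values, your transfer map $\psi\mapsto\psi^\sharp$ being a dual reformulation of the paper's slice-wise trace argument via the projection $P_n'$. The one step to make explicit is the converse implication in (b): since for finitely many layers the linear map sending $\psi$ to its moments against the nodal hat functions in $x_3$ is surjective onto $\R^{\nu}$, every layer-wise test function for $\bar f_n$ arises as some $\psi^\sharp$, so the same identity also converts $\tilde f_n \wsto f$ into $\bar f_n \wsto f^*$.
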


	\begin{proof}
		a) By \eqref{eq: interpol-bounds} the sequence $(\bar{f}_n)$ is bounded in $L^{\infty}(0,T; L^2(K;\R^m))$ for every compact set $K \subset \Oxc$ if and only if $(\tilde{f}_n)$ is bounded in $L^{\infty}(0,T; L^2(K \times [0,1];\R^m))$ for every compact $K \subset \Oxc$. 
		It is thus sufficient to consider test functions of the form $\varphi(t,x) = \eta(t) \chi_Q(x) \cdot e_i$ with $\eta \in C_c^\infty(0,T)$ and a cube $Q = \prod_{i=1}^3 [a_i, b_i) \subset \subset V \times (0,1)$ where $V$ is an arbitrary compact subset of $\R^2$.
		By our interpolation schemes we have, see \eqref{dashint-cube}, 
		\[ \dashint_{\tilde{Q}_n(x)} {\tilde{f}} \ d \xi 
		= {\tilde{f}}(x) 
		=  \frac{1}{8} \sum_{i=1}^{8} \dashint_{\tilde{Q}_n(x' + \varepsilon  (z^i)^\prime, x_3 + \frac{\varepsilon_n}{h_n} z^i_3)} {\bar{y}} \ d \xi \] for $x \in \tilde{\Lambra}^\prime_n$. So with $U_n = \prod_{i=1}^3 [a_i-\varepsilon_n, b_i+\varepsilon_n/h_n) \setminus \prod_{i=1}^3 [a_i+\varepsilon_n, b_i-\varepsilon_n/h_n)$ we have
		\begin{align*}
            \biggl \vert \int_Q ({\tilde{f}} - \bar f) \ d \xi \biggr \vert 
            \leq C \int_{U_n} |{\tilde{f}}| + |\bar f| \ d \xi 
            \le C (\| {\tilde{f}} \|_{L^2(V \times [0,1])} +\| {\tilde{f}} \|_{L^2(V \times [0,1])}) \sqrt{|U_n|}  
        \end{align*}	
		by Hölder's inequality. Hence, 
		\begin{align*}
			&\abs{\int_0^T \int_{V \times [0,1]} ({\tilde{f}}_n - {\bar{f}}_n) \varphi \ dx \ dt}
			= \abs{\int_0^T \eta(t) \int_Q ({\tilde{f}}_n - {\bar{f}}_n)\cdot e_i \ dx \ dt} 
			\le C \sqrt{|U_n|}  \to 0 
		\end{align*}
        as $n \to \infty$.
		
		b) Similarly as above we see that on each slice $x_3 = \frac{i}{\nu - 1}$, $i = 0, \ldots, \nu-1$, we have 
		\[{\bar{f}}_n(\cdot, x_3) \overset{\ast}{\rightharpoonup} f^*(\cdot, x_3) = f(\cdot, x_3) 
		\iff 
		{\tilde{f}}_n(\cdot, x_3) \overset{\ast}{\rightharpoonup} f(\cdot, x_3) \] 
		in $L^\infty(0,T;L^2_\mathrm{loc}(\R^2;\R^m))$. 
		The first condition is easily seen to be equivalent to ${\bar{f}}_n \overset{\ast}{\rightharpoonup} f^*$ in $L^{\infty}(0,T; L^2_\mathrm{loc} (\R^2 \times \{0, \tfrac{1}{\nu-1}, \ldots, \tfrac{\nu-2}{\nu-1}, 1\};\R^m))$. 
		
		To further investigate the second condition, we define $P_n^\prime \colon L^2(\R^2) \to L^2(\R^2)$ by
		\[ P_n^\prime g(\xi') = \dashint_{x' + \left( -\frac{\varepsilon_n}{2}, \frac{\varepsilon_n}{2}\right)^2} g(y') \ dy' \quad \text{whenever} \ \xi' \in x' + \left(-\frac{\varepsilon_n}{2}, \frac{\varepsilon_n}{2}\right)^2\]
        with $x' \in \varepsilon_n \Z^2$ and note that, by boundedness of $({\tilde{f}}_n)$ in $L^\infty(0,T;L^2(V \times [0,1];\R^m)$ for every compact subset $V \subset \R^2$, 
        \[ P_n^\prime {\tilde{f}}_n - {\tilde{f}}_n \overset{\ast}{\rightharpoonup} 0 
        \quad\text{and}\quad 
        P_n^\prime {\tilde{f}}_n\Big(\cdot, \frac{i}{\nu - 1}\Big) - {\tilde{f}}_n\Big(\cdot, \frac{i}{\nu - 1}\Big) \overset{\ast}{\rightharpoonup} 0. \]
        As for almost every $x' \in S$ the map $ x_3 \mapsto  P_n^\prime {\tilde{f}}_n(x',x_3)$ is affine on the intervals $(\frac{i-1}{\nu - 1}, \frac{i}{\nu - 1})$, $i = 1,\ldots,\nu -1$, by \eqref{eq:plane-average}, we see that ${\tilde{f}}_n(\cdot, \frac{i}{\nu - 1}) \overset{\ast}{\rightharpoonup} f(\cdot, \frac{i}{\nu - 1})$ in $L^\infty(0,T;L^2_\mathrm{loc}(\R^2;\R^m))$ holds true for all $i = 0,\ldots,\nu -1$ if and only if ${\tilde{f}}_n \overset{\ast}{\rightharpoonup} f$ in $L^{\infty}(0,T; L^2_\mathrm{loc} (\Oxc;\R^m))$. 
    \end{proof}

\subsection{Some properties of the quadratic forms and their derivatives}
	%
	
	We first note that by minimality on $SO(3)Z$, respectively, $SO(3)Z^{(1)}$, $DW_{\cell}(Z) = 0$ and $DW_{\surf}(Z^{(1)}) = 0$. 
	If $A \in \R_{\skewo}^{3 \times 3}$, so that $\exp(tA) \in SO(3)$ for all $t \in \R$, by frame indifference we  have $W_{\cell} (\exp(tA)Z) = W_{\surf} (\exp(tA)Z^{(1)}) = 0$ for all $t$. Differentiating at $t = 0$ leads to 
	\[Q_{\cell} \left(Z\right)\left[AZ,AZ\right] = 0 
		\qquad\text{as well as}\qquad  
		Q_{\surf} (Z^{(1)}) [AZ^{(1)}, AZ^{(1)}] = 0.\] 
	In particular, this implies 
	\[ Q_{\cell}^{\rel}(AZ)  = 0 
	 \quad\text{for } A \in \R_{\skewo}^{3 \times 3}
    \]
	As the quadratic forms $Q_{\cell}$, $Q_{\cell}^{\rel}$ and $Q_{\surf}$ are non-negative, this entails that 
	\begin{align}\label{eq: derivative-skew} 
	 DQ_{\cell}(AZ) 
	 = DQ_{\cell}^{\rel}(AZ) = 0
    \qquad\text{and}\qquad  
	 DQ_{\surf}(AZ^{(1)}) = 0 
	\end{align}
    for	$A \in \R^{3 \times 3}_{\skewo}$ and that  
    \begin{align*}
		Q_{\cell} (A + BZ) = Q_{\cell}(A) 
    \qquad\text{and}\qquad  
		Q_{\surf} (A + BZ^{(1)}) = Q_{\surf}(A)
	\end{align*}
	for every $A \in \R^{3 \times 8}$, respectively, $A \in \R^{3 \times 4}$, and every $B \in \R^{3 \times 3}_{\skewo}$. In particular, we have 
	\[ Q_{\cell}^{\rel} (A) 
	= \min_{b \in \R^3} Q_{\cell}(A + \sym(b \otimes e_3)Z) 
	= Q_{\cell}(A + \sym(b(A) \otimes e_3)Z). \] 
	In order to relate the derivatives of $Q_{\cell}^{\rel}$ and $Q_{\cell}$ we note that, by \eqref{eq:c-min-char}, 
	\begin{align}\label{eq: derivative-Qrel-Qcell}
	 DQ_{\cell}^{\rel} (A)
	 &= DQ_{\cell} (A + (b(A) \otimes e_3)Z) : (\,\cdot\, + b(\,\cdot\,) \otimes e_3)Z \nonumber \\ 
	 &= DQ_{\cell} (A + (b(A) \otimes e_3)Z). 
	\end{align}
    for every $A \in \R^{3 \times 8}$. 
	
	The following lemma shows that these derivatives coincide if a certain orthogonality condition holds true. 
	\begin{lemma} \label{lem: derivative-relaxed}
		Let $F \in \R^{3 \times 3}$ and $A \in \R^{3 \times 8}$. Suppose $F''$ denotes the upper left $2 \times 2$ submatrix of $F$. If
		\[ D Q_{\cell} \left( FZ + A\right) \perp \left(\R^3 \otimes e_3\right)Z \]
		then
		\[D Q_{\cell}^{\rel} (\hat{F}'' \, Z + A) 
		= DQ_{\cell} \left(FZ + A\right).  \]
	\end{lemma}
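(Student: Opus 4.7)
The plan is to find a single $b^\ast \in \R^3$ that simultaneously plays two roles: it bridges $\hat{F}''Z$ and $FZ$ up to a skew-symmetric perturbation (which $DQ_{\cell}$ is insensitive to), and it coincides with the minimizer $b(\hat{F}''Z+A)$ in the definition of $Q_{\cell}^{\rel}$. Once such a $b^\ast$ is produced, the assertion follows formally from the characterization \eqref{eq:c-min-char} and the identity \eqref{eq: derivative-Qrel-Qcell}.

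The algebraic heart of the proof is a rank-one-plus-skew decomposition of $G := F - \hat{F}''$. By construction $G$ has a vanishing upper-left $2\times 2$ block, so its only potentially non-zero entries sit in the third row or third column. A direct computation shows that
\[ b^\ast := (F_{13}+F_{31},\, F_{23}+F_{32},\, F_{33}) \]
is the unique vector for which $B := G - b^\ast \otimes e_3 \in \R^{3\times 3}_{\skewo}$: subtracting $b^\ast \otimes e_3$ modifies only the third column of $G$, and the three scalar equations $F_{33}-b_3^\ast = 0$, $F_{13}-b_1^\ast = -F_{31}$, $F_{23}-b_2^\ast = -F_{32}$ force exactly this choice. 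This is the only non-trivial step; the rest is bookkeeping.

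With this decomposition we have
\[ FZ + A = \hat{F}'' Z + A + (b^\ast \otimes e_3)Z + BZ, \]
and since $B$ is skew, differentiating the frame-indifference identity $Q_{\cell}(\,\cdot\, + BZ) = Q_{\cell}(\,\cdot\,)$ in its argument yields $DQ_{\cell}(P + BZ) = DQ_{\cell}(P)$, whence
\[ DQ_{\cell}(FZ + A) = DQ_{\cell}\bigl(\hat{F}'' Z + A + (b^\ast \otimes e_3)Z\bigr). \]
The hypothesis says the left-hand side is perpendicular to $(\R^3 \otimes e_3)Z$, so the right-hand side meets the characterization \eqref{eq:c-min-char} of the unique minimizer $b(\hat{F}'' Z + A)$, forcing $b^\ast = b(\hat{F}'' Z + A)$. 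Combining this with \eqref{eq: derivative-Qrel-Qcell} gives
\[ DQ_{\cell}^{\rel}(\hat{F}'' Z + A) = DQ_{\cell}\bigl(\hat{F}'' Z + A + (b^\ast \otimes e_3)Z\bigr) = DQ_{\cell}(FZ + A), \]
which is the desired identity. The main obstacle is purely algebraic, namely spotting that the third-row/third-column part of $G$ always admits the decomposition into a term of the form $b^\ast \otimes e_3$ plus a skew matrix; once this is recognized the orthogonality hypothesis and uniqueness of $b(\cdot)$ do all the remaining work.
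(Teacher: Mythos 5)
Your proof is correct and follows essentially the same route as the paper: decompose $F - \hat{F}''$ as $b^\ast \otimes e_3$ plus a skew matrix, use the invariance of $DQ_{\cell}$ under adding elements of $\R^{3\times 3}_{\skewo}Z$, and identify $b^\ast$ with $b(\hat{F}''Z + A)$ via the orthogonality hypothesis, \eqref{eq:c-min-char} and \eqref{eq: derivative-Qrel-Qcell}. The only (immaterial) difference is that the paper's vector $c$ has third component $2F_{33}$, which appears to be a typo, since your choice $F_{33}$ is the one that makes $F - \hat{F}'' - b^\ast \otimes e_3$ skew-symmetric.
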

	
	\begin{proof}
		Let $F = \hat{F}'' + F'$ and let
		$c = (F_{13} + F_{31}, F_{23} + F_{32}, 2 F_{33})^T$. 
		Then we have
		$ \sym F'  = \sym (c \otimes e_3)$ and, since $DQ_{\cell}$ vanishes on $\R^{3 \times 3}_\text{skew}Z$,
		\begin{align*}
			DQ_{\cell} (FZ + A) 
			&= DQ_{\cell} ( \hat{F}'' Z + A + F'Z ) \\
			&= DQ_{\cell} ( \hat{F}'' Z + A + (c \otimes e_3)Z )
		\end{align*}
		is orthogonal to $(\R^3 \otimes e_3)Z$. But then \eqref{eq:c-min-char} implies $c = b(\hat{F}''  Z + A )$. 
		It follows form \eqref{eq: derivative-Qrel-Qcell} that 
		\[DQ_{\cell}^{\rel} ( \hat{F}'' Z + A  ) 
			= DQ_{\cell} ( \hat{F}'' Z + A + (c \otimes e_3)Z ) 
			= DQ_{\cell} (FZ + A ).\qedhere
		\]
	\end{proof}

	In order to check for the orthogonality relation in the assumptions of the previous results, the following observation is useful. 
	\begin{lemma}\label{lemma: surf-to-bulk-orth}
		Let $A \in \R^{3 \times 8}$ such that
		\begin{equation}
			\sum_{l = 1}^{4} \big[D Q_{\cell} (A)_{\cdot l} 
			+  DQ_{\surf} (A^{(1)})_{\cdot l}  \big] 
			= \sum_{l=5}^{8} DQ_{\cell} (A)_{\cdot l} \label{tag1}
		\end{equation}
		or
		\begin{equation}
			\sum_{l = 1}^{4} D Q_{\cell} (A)_{\cdot l} = 
			\sum_{l=5}^{8} DQ_{\cell} (A)_{\cdot l}
			+ \sum_{l=1}^{4} D Q_{\surf} (A^{(2)})_{\cdot l} \label{tag2}.
		\end{equation}
		Then 
		\[DQ_{\cell} (A) \perp \left(\R^3 \otimes e_3\right)Z. \]
	\end{lemma}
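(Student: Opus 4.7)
The plan is to translate the orthogonality claim into an explicit linear condition on the column sums of $DQ_{\cell}(A)$, and then show that the two hypotheses \eqref{tag1} and \eqref{tag2} supply exactly that relation, with the extra surface contributions vanishing by translation invariance of $Q_{\surf}$.

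First, I compute the inner product $DQ_{\cell}(A) : (b \otimes e_3) Z$ explicitly. Since the third row of $Z$ has entries $-\tfrac{1}{2}$ in columns $1,\ldots,4$ and $+\tfrac{1}{2}$ in columns $5,\ldots,8$, the columns of $(b \otimes e_3)Z$ are $-\tfrac{1}{2} b$ for $i \leq 4$ and $+\tfrac{1}{2} b$ for $i \geq 5$. Therefore
\begin{align*}
DQ_{\cell}(A) : (b \otimes e_3) Z
= \tfrac{1}{2} b \cdot \Bigl( \sum_{l=5}^{8} DQ_{\cell}(A)_{\cdot l} - \sum_{l=1}^{4} DQ_{\cell}(A)_{\cdot l} \Bigr),
\end{align*}
so $DQ_{\cell}(A) \perp (\R^3 \otimes e_3)Z$ if and only if $\sum_{l=1}^{4} DQ_{\cell}(A)_{\cdot l} = \sum_{l=5}^{8} DQ_{\cell}(A)_{\cdot l}$.

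Next, I observe that frame indifference of $W_{\surf}$ gives $W_{\surf}(B + (c,c,c,c)) = W_{\surf}(B)$ for every $B \in \R^{3 \times 4}$ and $c \in \R^3$. Differentiating twice at $B = Z^{(1)}$ shows $Q_{\surf}(B + (c,c,c,c)) = Q_{\surf}(B)$ for all $B$, and then differentiating this identity in $c$ at $c = 0$ yields $c \cdot \sum_{l=1}^{4} DQ_{\surf}(B)_{\cdot l} = 0$ for all $c \in \R^3$; thus
\begin{align*}
\sum_{l=1}^{4} DQ_{\surf}(B)_{\cdot l} = 0 \qquad \text{for every } B \in \R^{3 \times 4}.
\end{align*}
In particular this holds for $B = A^{(1)}$ and for $B = A^{(2)}$.

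Finally, inserting this identity into hypothesis \eqref{tag1} (applied with $B = A^{(1)}$) gives $\sum_{l=1}^{4} DQ_{\cell}(A)_{\cdot l} = \sum_{l=5}^{8} DQ_{\cell}(A)_{\cdot l}$, and the same holds under hypothesis \eqref{tag2} (applied with $B = A^{(2)}$). By the first step this is equivalent to the desired orthogonality. I expect no real obstacle here; the only point requiring care is the bookkeeping of columns and the realization that the translational symmetry $W_{\surf}(\,\cdot\, + (c,c,c,c)) = W_{\surf}(\,\cdot\,)$ — which is already listed among the standing assumptions in the form $c = (c,\ldots,c)$ — transfers directly to $Q_{\surf}$ via the Taylor expansion at $Z^{(1)}$.
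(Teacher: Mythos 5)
Your proof is correct and follows essentially the same route as the paper's: both reduce the orthogonality to the equality of the two half-column sums of $DQ_{\cell}(A)$ and then eliminate the surface terms via the translation invariance $Q_{\surf}(\,\cdot\, + (c,c,c,c)) = Q_{\surf}(\,\cdot\,)$, which in the paper appears as $DQ_{\surf}(A^{(1)}) \perp (\R^3 \otimes e_3)Z^{(1)}$ because $(c \otimes e_3)Z^{(1)} = -\frac{1}{2}(c,c,c,c)$. One small wording point: ``differentiating twice at $B = Z^{(1)}$'' literally yields equality of Hessians at translated base points, not invariance of $Q_{\surf}$ under translation of its argument; the clean justification is the Taylor expansion $W_{\surf}(Z^{(1)} + tH) = \frac{t^2}{2}Q_{\surf}(H) + o(t^2)$ applied to $H$ and $H + (c,c,c,c)$, which shows $(c,c,c,c)$ lies in the radical of $D^2W_{\surf}(Z^{(1)})$.
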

	
	\begin{proof}
		Assume that \eqref{tag1} holds true, the statement for \eqref{tag2} is shown similarly.	Let $Q: \R^{3 \times 8} \to \R$ be the quadratic form defined by
		\[Q(H) = Q_{\cell}(H) + Q_{\surf}(H^{(1)}). \]
		Since $Q_{\cell}$ is positive definite on $(\R^3 \otimes e_3)Z$ and $Q_{\surf} \geq 0$ this is also the case for $Q$. 
		By assumption we have
		\[ \big[DQ_{\cell}(A)^{(1)} + DQ_{\surf}(A^{(1)}), DQ_{\cell} (A)^{(2)} \big] \perp (\R^3 \otimes e_3)Z.    \]
		For every $c \in \R^3$ we have $(c \otimes e_3) Z^{(1)}  = - \frac{1}{2}(c,c,c,c) \in \R^{3 \times 4}$ and, since $t \mapsto Q_{\surf}(A^{(1)} + t (c \otimes e_3) Z^{(1)})$ is constant in $t$, we find $DQ_{\surf}(A^{(1)}) \perp (\R^3 \otimes e_3)Z^{(1)}$. This implies $DQ_{\cell}(A) \perp \left(c \otimes e_3\right)Z$.
	\end{proof}

	For later use we remark here that, more generally than above, frame indifference implies that if $B \in \R_{\skewo}^{3 \times 3}$, then indeed $W_{\cell} (\exp(tB)A) = W_{\cell}(A)$ and $W_{\surf} (\exp(tB)A^{(1)}) = W_{\surf} (A^{(1)})$ for any $A \in \R^{3 \times 8}$ and for all $t$, so that differentiating at $t = 0$ leads to 
	\begin{align}\label{eq:W-sym-prop}
	 DW_{\cell} (A)A^T:B 
	 = DW_{\surf}(A^{(1)}) (A^{(1)})^T : B = 0
	\end{align}
    for all $A \in \R^{3 \times 8}$, $B \in \R_{\skewo}^{3 \times 3}$.

\subsection{Auxiliary results in the continuous setup}

    The purpose of this section is to collect a number of results for continuous (non-atomistic) thin plates in the von Kármán energy regime that will be needed in our analysis later.

	The following two propositions provide a collection of the (for us) most relevant results of \cite{FrieseckeJamesMueller_06}, \cite{LecumberryMueller_09} and \cite{AbelsMoraMueller_09}. 
	
	\begin{proposition}\label{prop: td-displacements}
		Let $y_n \in L^2(0,T; H^1_{\mathrm{loc}}(\Oxc; \R^3))$ be a sequence with
		\begin{align*}
			\partial_t y_n &\in L^2(0,T; L^2(\Ox; \R^3) ), \\
			\partial_{tt} y_n &\in L^2(0,T; H^{-1}(\Ox;\R^3)),
		\end{align*}
		such that for every $T' \in I_T$
		\begin{align}
			\esssup_{t \in [0,T']} \int_{\Ox} \abs{\partial_t y_n(t,x)}^2 \ dx \leq C(T') h_n^2, \label{bound-derivative}\\
			\esssup_{t \in [0,T']} \int_{\Ox} \dist^2(\nabla_n y_n(t,x), SO(3) ) \ dx \leq C(T') h_n^4.
		\end{align}
		and
		\begin{equation}
			y_n(t,x) = \matr{x' \\ h_n(x_3 - \frac{1}{2})} \quad \text{whenever} \ x' \in \R^2 \setminus S_n, \label{eq:yn-Sn-RWe}
		\end{equation}
        where $S \subset S_n \subset \{x \in \R^2 : \dist(x,S) \le 2 \eps_n\}$.
		Then there is an approximating sequence $R_n \subset L^\infty_{\mathrm{loc}}(I_T; H^1_\mathrm{loc}(\R^2; \R^{3 \times 3}))$ such that $R_n(t,x') \in SO(3)$ for almost every $(t,x') \in (0,T) \times \R^2$ and
		\begin{align}
			\esssup_{t \in [0,T']} \norm{ \nabla_n y_n(t, \cdot) - R_n(t, \cdot)}_{L^2(\Ox)} \leq C(T') h_n^2, \label{y_n - r_n} \\
			\esssup_{t \in [0,T']} \norm{\nabla' R_n(t,\cdot)}_{L^2(\R^2)}  \leq C(T') h_n, \label{Dy_n - R_n} \\
			\esssup_{t \in [0,T']} \norm{R_n(t,\cdot ) - Id}_{H^1(\R^2)} \leq C(T') h_n \label{r_n - id}
		\end{align}
		for every $T' \in I_T$. Moreover, the averaged scaled in- and out-of-plane displacements
		\begin{align}
			u_n(t,x') &\coloneqq h_n^{-2} \int_0^1 (y_n(t,x))' - x' \ d x_3, \label{eq: un-def} \\
			v_n(t,x') &\coloneqq h_n^{-1} \int_0^1 (y_n(t,x))_3 \ dx_3 \label{eq: vn-def} 
		\end{align}
		satisfy, up to a subsequence, the following convergence properties:
		\begin{align}
			u_n \overset{\ast}{\rightharpoonup} u \quad &\text{in} \ L^\infty_{\mathrm{loc}}(I_T; H^1(\R^2;\R^2)), \label{weak-*-un}\\
			v_n \to v \quad  &\text{in} \ L^\infty_{\mathrm{loc}}(I_T; L^2(\R^2)), \label{strong-vn}\\
			\partial_t v_n \overset{\ast}{\rightharpoonup} \partial_t v \quad &\text{in} \ L^\infty_{\mathrm{loc}}(I_T; L^2(\R^2)). \label{weak-*-dtvn}
		\end{align}
		It even holds that $v \in L^\infty_{\mathrm{loc}}(I_T; H^2(\R^2))$. The maps $u$ and $v$ satisfy the boundary conditions
		\[ u = 0, \quad v = 0, \quad \nabla' v = 0 \qquad \text{on } \R^2 \setminus S.  \]
	\end{proposition}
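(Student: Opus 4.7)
The plan is to adapt the Friesecke--James--Müller geometric rigidity machinery, together with the refined $H^1$-control on the rotation field due to Lecumberry--Müller, to the time-dependent setting, essentially following the strategy of Abels--Mora--Müller. The key is that all pointwise-in-time spatial estimates should hold uniformly in $t$ thanks to the $\esssup$ in the hypotheses, so the time variable plays a passive role for the spatial compactness statements, while the bound \eqref{bound-derivative} on $\partial_t y_n$ takes care of compactness in time.

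First I would apply the quantitative geometric rigidity theorem slice by slice (in $t$) on $\Ox$: for each $t$, \cite{FrieseckeJamesMueller_02} gives a rotation $R_n^{(0)}(t) \in SO(3)$ with $\| \nabla_n y_n(t,\cdot) - R_n^{(0)}(t) \|_{L^2(\Ox)}^2 \lesssim h_n^4$. To get spatial regularity, one covers $S$ by small squares and iterates the argument to construct a piecewise constant (in $x'$) rotation field, then averages to obtain $R_n(t,\cdot) \in H^1(\R^2;\R^{3\times3})$ with $R_n \in SO(3)$ a.e.\ and \eqref{y_n - r_n}, \eqref{Dy_n - R_n} (this is the Lecumberry--Müller gradient estimate \cite{LecumberryMueller_09}). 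Measurability in $t$ follows from the explicit construction, and the $\esssup$ bound comes for free since the constants in the rigidity estimate depend only on $S$, not on $t$. Using the clamped boundary condition \eqref{eq:yn-Sn-RWe} one sees $R_n(t,\cdot) = \Id$ outside a small neighborhood of $S$, and combining with a Poincaré inequality on $\R^2 \setminus S$ together with \eqref{Dy_n - R_n} gives \eqref{r_n - id}.

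Next, I would extract the displacements. Writing $R_n = \Id + h_n A_n + O(h_n^2)$ with $A_n$ bounded in $H^1(\R^2;\R^{3\times 3}_{\skewo})$ (using $R_n R_n^T = \Id$ to first order), one computes
\begin{align*}
 \nabla' v_n(t,x')
 = h_n^{-1} \int_0^1 \nabla'(\tilde y_n)_3 \ dx_3
 \approx h_n^{-1} \int_0^1 (R_n e_\alpha)_3 \ dx_3
 = (A_n)_{3\alpha} + O(h_n)
\end{align*}
so $\nabla' v_n$ is uniformly bounded in $L^\infty_{\rm loc}(I_T; H^1(\R^2))$, giving the $H^2$-bound on $v_n$. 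Similarly, expanding $\nabla' u_n$ via $R_n - \Id = h_n A_n + \tfrac12 h_n^2 A_n^2 + \ldots$ and combining the symmetric part of the $O(h_n^2)$ correction with the quadratic-in-$\nabla' v_n$ term yields the $L^\infty_{\rm loc}(I_T; H^1(\R^2;\R^2))$ bound for $u_n$ (this is the standard von Kármán expansion, exploiting that $A_n^2$ contains $(\nabla' v_n)\otimes(\nabla' v_n)$ at leading order). The time derivative bound \eqref{bound-derivative} directly gives $\partial_t v_n \in L^\infty_{\rm loc}(I_T; L^2(\R^2))$ with a uniform bound.

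With these bounds, Banach--Alaoglu yields \eqref{weak-*-un} and \eqref{weak-*-dtvn} along a subsequence, and $v_n \overset{*}{\rightharpoonup} v$ in $L^\infty_{\rm loc}(I_T; H^2)$. Strong convergence \eqref{strong-vn} follows by an Aubin--Lions argument: $v_n$ is bounded in $L^\infty_{\rm loc}(I_T; H^2_{\rm loc})$ while $\partial_t v_n$ is bounded in $L^\infty_{\rm loc}(I_T; L^2)$, so the embedding $H^2 \hookrightarrow\hookrightarrow L^2$ locally in $x'$ together with equicontinuity in $t$ yields compactness in $L^\infty_{\rm loc}(I_T; L^2_{\rm loc})$; the global statement in $x'$ follows because $v_n$ is supported in a fixed neighborhood of $S$. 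The boundary conditions $u = v = 0$, $\nabla' v = 0$ on $\R^2 \setminus S$ come from \eqref{eq:yn-Sn-RWe}: outside $S_n$ we have $u_n \equiv 0$ and $v_n \equiv 0$, and $S_n \to S$ in Hausdorff distance.

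The main obstacle I anticipate is the simultaneous control of the rotation field $R_n$ in $H^1$ uniformly in $t$ with constants that do not depend on $t$, and matching this with the precise second-order expansion needed to obtain the $H^1$-bound on the rescaled in-plane displacement $u_n$ (the cancellation between the symmetric gradient of the in-plane displacement and the $\tfrac12 \nabla' v \otimes \nabla' v$ term is delicate, and is the heart of the von Kármán scaling). Once the spatial rigidity is in place uniformly in $t$, the passage to the limit in $t$ is standard compactness.
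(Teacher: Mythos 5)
Your strategy is the same as the paper's: Proposition~\ref{prop: td-displacements} is not proved there but assembled from \cite{FrieseckeJamesMueller_06}, \cite{LecumberryMueller_09} and \cite{AbelsMoraMueller_09}, i.e.\ exactly the ingredients you list (slicewise geometric rigidity with $t$-uniform constants, the $H^1$-estimate on the rotation field, $R_n=\Id$ away from $S$ by the clamping, time compactness from \eqref{bound-derivative}). Two steps of your sketch are, however, not correct as written. First, $v_n$ is \emph{not} bounded in $L^\infty_{\mathrm{loc}}(I_T;H^2)$: you have $\nabla' v_n = (A_n)_{3\alpha} + h_n^{-1}\int_0^1(\nabla_n y_n - R_n)_{3\alpha}\,dx_3$, and while the remainder is $O(h_n)$ in $L^2$ by \eqref{y_n - r_n}, nothing controls its spatial gradient (that would require second derivatives of an $H^1$ deformation). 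Only the weak limit $\nabla' v = A_{3\alpha}$ inherits the $H^1$ regularity of $A_n$, which is why the conclusion is $v\in L^\infty_{\mathrm{loc}}(I_T;H^2)$ for the limit only. Consequently your Aubin--Lions argument for \eqref{strong-vn} cannot invoke an $H^2_{\mathrm{loc}}$ bound on $v_n$; it must (and can) be run with the $L^\infty_{\mathrm{loc}}(I_T;H^1)$ bound on $v_n$ together with the $L^\infty_{\mathrm{loc}}(I_T;L^2)$ bound on $\partial_t v_n$, which still yields compactness in $L^\infty_{\mathrm{loc}}(I_T;L^2)$.

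Second, your expansion of $R_n-\Id$ only bounds $\sym\nabla' u_n$ in $L^2$: the identity $\sym(R_n-\Id)=-\tfrac12(R_n-\Id)^T(R_n-\Id)$ controls $h_n^{-2}\sym(R_n-\Id)$ in every $L^p$, $p<\infty$, but the skew part of $h_n^{-2}(R_n-\Id)''$ is a priori only $O(h_n^{-1})$ (its limit vanishes since $A''=0$, but with no rate). To upgrade the bound on $\sym\nabla' u_n$ to the full $H^1$ bound on $u_n$ you need Korn's inequality on a large box containing $S$, using that $u_n$ vanishes outside $S_n$ by \eqref{eq:yn-Sn-RWe} --- this is precisely how the paper argues for $\hat u_n$ in the proof of Proposition~\ref{prop: td-displacements-3d}. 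Without Korn plus the clamping one only controls $u_n$ modulo in-plane infinitesimal rigid motions, which is the form the statement takes in \cite{FrieseckeJamesMueller_06}.
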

    We remark that the construction of the maps $R_n$ in \cite{FrieseckeJamesMueller_06} yields $R_n = Id$ outside of a neighborhood of $S$, which allowed us to consider all quantities on the whole plane.

	For the proof of the following proposition we refer to the second step in the proof of Theorem 2.1, \cite{AbelsMoraMueller_09}.
	\begin{proposition}\label{prop: td-displacements-further}
		In the setting of Proposition~\ref{prop: td-displacements} with \eqref{weak-*-un}, \eqref{strong-vn} and \eqref{weak-*-dtvn} let 
		\[ A_n = h_n^{-1} \left( R_n - Id \right). \]
		Then
		\begin{equation}
			A_n \overset{\ast}{\rightharpoonup} A = e_3 \otimes \nabla' v - \nabla' v \otimes e_3 \quad \text{in} \ L^\infty_{\mathrm{loc}}(I_T; H^1(\R^2; \R^{3 \times 3})). \label{A_n-weak-*}
		\end{equation}
		The map $h_n^{-2} \sym(R_n - Id)$ is bounded in $L^\infty_{\mathrm{loc}}(I_T; L^p(\R^2; \R^{3 \times 3}))$ for every $p < \infty$. Moreover $A_n e_\alpha$ is strongly compact in $L^q_\mathrm{loc} (I_T; L^p(\R^2; \R^3))$ for $\alpha = 1,2$ and any $1 \leq p,q < \infty$. 
	\end{proposition}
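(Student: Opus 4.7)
The plan is to extract a weak limit from the $H^1$-bound \eqref{r_n - id}, exploit the $SO(3)$-constraint to control the symmetric part, identify the limit by comparison with $h_n^{-1}(\nabla_n y_n - \Id)$ and the averages defining $u_n, v_n$, and finally upgrade to strong compactness for $A_n e_\alpha$ by transferring the time regularity of $v_n$ from Proposition~\ref{prop: td-displacements}.

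First, \eqref{r_n - id} gives that $A_n = h_n^{-1}(R_n - \Id)$ is bounded in $L^\infty_{\mathrm{loc}}(I_T; H^1(\R^2; \R^{3\times 3}))$, so a subsequence satisfies $A_n \wsto A$ in this space. Since $R_n(t,x') \in SO(3)$, expanding $(\Id + h_n A_n)^T(\Id + h_n A_n) = \Id$ yields the pointwise identity $\sym A_n = -\tfrac{h_n}{2} A_n^T A_n$. The two-dimensional Sobolev embedding $H^1(K) \hookrightarrow L^q(K)$ for every $q < \infty$ and every bounded $K \subset \R^2$ then makes $A_n^T A_n$ bounded in $L^\infty_{\mathrm{loc}}(I_T; L^p_{\mathrm{loc}}(\R^2))$ for every $p < \infty$. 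This yields the claimed boundedness of $h_n^{-2}\sym(R_n - \Id) = -\tfrac{1}{2} A_n^T A_n$ and at the same time shows $\sym A_n \to 0$ strongly in $L^\infty_{\mathrm{loc}}(I_T; L^p_{\mathrm{loc}}(\R^2))$, forcing the weak limit $A$ to be skew-symmetric almost everywhere.

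For the identification, \eqref{y_n - r_n} gives $h_n^{-1}(\nabla_n y_n - \Id) - A_n \to 0$ strongly in $L^\infty_{\mathrm{loc}}(I_T; L^2(\Ox;\R^{3\times 3}))$. Averaging the $(3,\beta)$-entry over $x_3 \in (0,1)$ and invoking \eqref{eq: vn-def} shows $(A_n)_{3\beta} = \partial_\beta v_n + o_n(1)$ in $L^\infty_{\mathrm{loc}}(I_T; L^2(\R^2))$ for $\beta \in \{1,2\}$; averaging the $(\alpha,\beta)$-entry with $\alpha,\beta \in \{1,2\}$ and using \eqref{eq: un-def} yields $(A_n)_{\alpha\beta} = h_n \partial_\beta (u_n)_\alpha + o_n(1)$, which vanishes in $L^\infty_{\mathrm{loc}}(L^2)$ thanks to the uniform bound on $u_n$ in $L^\infty_{\mathrm{loc}}(I_T;H^1(\R^2;\R^2))$ implicit in \eqref{weak-*-un}. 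Combined with skew-symmetry this pins down $A = e_3 \otimes \nabla' v - \nabla' v \otimes e_3$.

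Finally, to obtain strong compactness of $A_n e_\alpha$ in $L^q_{\mathrm{loc}}(I_T; L^p(\R^2;\R^3))$ for $\alpha \in \{1,2\}$ and arbitrary $p,q < \infty$, split $A_n e_\alpha = \sym(A_n) e_\alpha + \skewo(A_n) e_\alpha$. The symmetric part is handled above. Of the three scalar components of the skew part, the in-plane ones $(A_n)_{\alpha' \alpha}$, $\alpha' \in \{1,2\}$, already tend to zero strongly in $L^\infty_{\mathrm{loc}}(L^2)$ by the identification, and boundedness in $L^\infty_{\mathrm{loc}}(H^1)$ combined with Sobolev interpolation upgrades this to $L^\infty_{\mathrm{loc}}(L^p_{\mathrm{loc}})$. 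The remaining component $(A_n)_{3\alpha} = \partial_\alpha v_n + o(1)$ is handled via the Aubin--Lions lemma: $v_n$ bounded in $L^\infty_{\mathrm{loc}}(I_T; H^2(\R^2))$ together with $\partial_t v_n$ bounded in $L^\infty_{\mathrm{loc}}(I_T; L^2(\R^2))$ and the compact embedding $H^2(K) \hookrightarrow\hookrightarrow H^1(K)$ on bounded $K$ give $\nabla' v_n \to \nabla' v$ strongly in $L^q_{\mathrm{loc}}(I_T; L^p_{\mathrm{loc}}(\R^2))$ after a further interpolation with the $L^\infty L^p$-bound from $H^2(K) \hookrightarrow W^{1,p}(K)$. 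The main obstacle is precisely this space--time compactness: the spatial $H^1$-bound delivers only Rellich compactness in $x'$, and the construction of $R_n$ as a pointwise projection onto $SO(3)$ (cf.\ \cite{FrieseckeJamesMueller_06}) yields no immediate estimate on $\partial_t R_n$; the resolution is to avoid any analysis of $\partial_t R_n$ and instead transfer the good time regularity of the averaged quantities $u_n$ and $v_n$ to the relevant entries of $A_n$ via the identification step.
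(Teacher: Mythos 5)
The paper offers no proof of this proposition; it defers entirely to Step~2 of the proof of Theorem~2.1 in \cite{AbelsMoraMueller_09}. Your argument reconstructs essentially that route, and the first two assertions are handled correctly: the $H^1$-bound from \eqref{r_n - id}, the identity $\sym A_n = -\tfrac{h_n}{2}A_n^T A_n$ from the $SO(3)$-constraint together with the two-dimensional Sobolev embedding (plus the fact that $R_n = \Id$ outside a neighborhood of $S$, which turns $L^p_{\mathrm{loc}}$ into $L^p$), and the identification of $A$ through $h_n^{-1}(\nabla_n y_n - \Id)$ and the averaged displacements are all sound.

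There is, however, a genuine gap in the final compactness step: you invoke Aubin--Lions with ``$v_n$ bounded in $L^\infty_{\mathrm{loc}}(I_T;H^2(\R^2))$'', but no such bound is available. Proposition~\ref{prop: td-displacements} asserts $v\in L^\infty_{\mathrm{loc}}(I_T;H^2)$ only for the \emph{limit}. For $v_n$ one merely knows $\nabla' v_n = \big((A_n)_{31},(A_n)_{32}\big) + \rho_n$ with the first term bounded in $L^\infty_{\mathrm{loc}}(I_T;H^1(\R^2))$ and $\rho_n = \int_0^1 h_n^{-1}(\nabla_n y_n - R_n)_{3\cdot}\,dx_3 = O(h_n)$ only in $L^\infty_{\mathrm{loc}}(I_T;L^2)$; in the discrete application $\tilde y_n$ is piecewise affine, so $\nabla'^2 v_n$ is in general a measure and the $H^2$-bound genuinely fails. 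The same missing bound invalidates the subsequent interpolation via $H^2(K)\hookrightarrow W^{1,p}(K)$. The repair is to run the compactness argument on $(A_n)_{3\alpha}$ itself: it is bounded in $L^\infty_{\mathrm{loc}}(I_T;H^1(\R^2))$ and, since $\|\partial_t v_n\|_{L^\infty_{\mathrm{loc}}(I_T;L^2)}\le C$ by \eqref{bound-derivative}, it satisfies the asymptotic equicontinuity estimate
\begin{align*}
\big\|(A_n)_{3\alpha}(t)-(A_n)_{3\alpha}(s)\big\|_{H^{-1}}
\le \|v_n(t)-v_n(s)\|_{L^2} + C h_n
\le C|t-s| + C h_n .
\end{align*}
Classical Aubin--Lions does not apply directly (there is no bound on $\partial_t (A_n)_{3\alpha}$, exactly because $\partial_t R_n$ is uncontrolled, as you note), but a Simon-type Arzel\`a--Ascoli argument combined with Ehrling's lemma for the triple $H^1(K)\hookrightarrow\hookrightarrow L^p(K)\hookrightarrow H^{-1}(K)$ yields the claimed strong compactness in $L^q_{\mathrm{loc}}(I_T;L^p)$; this is precisely the device used in \cite{AbelsMoraMueller_09}. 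With that substitution your proof is complete.
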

    
    More generally than in \eqref{eq: un-def} and \eqref{eq: vn-def} we also identify the original, non-averaged scaled in- and out-of-plane displacements, cp.\ \cite{BraunSchmidt2022} in a stationary setting.

	\begin{proposition}\label{prop: td-displacements-3d}
		Let $y_n$ satisfy the assumptions of Proposition~\ref{prop: td-displacements}, assume \eqref{weak-*-un}, \eqref{strong-vn}, \eqref{weak-*-dtvn} hold true and define
		\begin{align*}
			\hat{u}_n(t,x) &\coloneqq h_n^{-2} \left( (y_n(t,x))' - x' \right), \\
			\hat{v}_n(t,x) &\coloneqq h_n^{-1} \left( y_n(t,x) \right)_3.
		\end{align*}
		Then
		\begin{align}
			\hat{u}_n \overset{\ast}{\rightharpoonup} \hat{u} \quad &\text{in} \ L^\infty_{\mathrm{loc}}(I_T; H^1(\Ox;\R^2)), \\
			\hat{v}_n \overset{\ast}{\rightharpoonup} \hat{v} \quad &\text{in} \ L^\infty_{\mathrm{loc}}(I_T; H^1_{\mathrm{loc}}(\Oxc)),
		\end{align}
		where
		\begin{align}
			\hat{u}(t,x) &= u(t,x') - \left(x_3 - \frac{1}{2}\right) \nabla'v(t,x'), \label{uhat} \\
			\hat{v}(t,x) &= v(t,x') + \left(x_3 - \frac{1}{2}\right). \label{vhat}
		\end{align}
		Moreover it even holds that
		\begin{equation}
			\hat{v}_n \to \hat{v} \quad \text{in} \ L^\infty_{\mathrm{loc}}(I_T; L^2_{\mathrm{loc}}(\Oxc)). \label{vn_3d_strong}
		\end{equation}
	\end{proposition}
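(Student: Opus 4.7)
The plan is to combine the approximating rotations $R_n$ and the matrices $A_n = h_n^{-1}(R_n - \Id)$ from Propositions~\ref{prop: td-displacements} and~\ref{prop: td-displacements-further} with the rigidity estimate $\nabla_n y_n = R_n + O(h_n^2)$ in $L^\infty_\mathrm{loc}(I_T; L^2(\Ox))$ provided by~\eqref{y_n - r_n}. Writing $R_n = \Id + h_n A_n$ and reading off individual entries gives at once, in $L^\infty_\mathrm{loc}(I_T; L^2(\Ox))$, the identities $\partial_3(\hat{u}_n)_\beta = h_n^{-1}(\nabla_n y_n)_{\beta 3} = (A_n)_{\beta 3} + O(h_n)$ for $\beta = 1,2$, $(\nabla' \hat{v}_n)_\alpha = h_n^{-1}(\nabla_n y_n)_{3\alpha} = (A_n)_{3 \alpha} + O(h_n)$ for $\alpha = 1,2$, and $\partial_3 \hat{v}_n = (\nabla_n y_n)_{33} = 1 + h_n(A_n)_{33} + O(h_n^2)$. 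Since $A_n \overset{\ast}{\rightharpoonup} A = e_3 \otimes \nabla' v - \nabla' v \otimes e_3$ with $A_{\beta 3} = -\partial_\beta v$, $A_{3 \alpha} = \partial_\alpha v$ and $A_{33} = 0$, this already produces the weak-$\ast$ limits $\partial_3 \hat{u}_n \overset{\ast}{\rightharpoonup} -\nabla' v$, $\nabla' \hat{v}_n \overset{\ast}{\rightharpoonup} \nabla' v$, and the strong convergence $\partial_3 \hat{v}_n \to 1$.

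The main obstacle is establishing the uniform $L^\infty_\mathrm{loc}(I_T; H^1(\Ox;\R^2))$-bound on $\hat{u}_n$, for which one needs to control $\nabla' \hat{u}_n = h_n^{-2}((\nabla_n y_n)'' - \Id_{2\times 2})$. The symmetric part is tractable: by Proposition~\ref{prop: td-displacements-further} (equivalently, via the identity $2\sym(R_n - \Id) = -(R_n - \Id)^T (R_n - \Id)$ that $R_n \in SO(3)$ provides), $h_n^{-2}\sym(R_n - \Id)$ is bounded in $L^\infty_\mathrm{loc}(I_T; L^p(\R^2))$ for every $p < \infty$, so $\sym \nabla' \hat{u}_n$ is bounded in $L^2(\Ox)$. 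The skew part $h_n^{-2}\skewo((R_n)'' - \Id_{2\times 2}) = h_n^{-1}\skewo((A_n)'')$ is, however, \emph{not} directly bounded, because $R_n$ may carry an in-plane twist of order $h_n$. The remedy is to exploit the clamped boundary condition \eqref{eq:yn-Sn-RWe}: on $\R^2 \setminus S_n$ one has $\hat{u}_n = 0$, so for fixed $t$ and $x_3$ the map $\hat{u}_n(t, \cdot, x_3) \in H^1(\R^2;\R^2)$ has compact support, and Korn's inequality for compactly supported vector fields on $\R^2$ yields $\|\nabla' \hat{u}_n(t, \cdot, x_3)\|_{L^2(\R^2)}^2 \le 2\|\sym \nabla' \hat{u}_n(t, \cdot, x_3)\|_{L^2(\R^2)}^2$. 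Integration in $x_3$ together with Poincar\'e's inequality yields the desired bound. The analogous bound for $\hat{v}_n$ in $L^\infty_\mathrm{loc}(I_T; H^1_\mathrm{loc}(\Oxc))$ is simpler, since $\hat{v}_n - (x_3 - \tfrac12)$ vanishes outside $S_n$ while $\nabla' \hat{v}_n$ and $\partial_3 \hat{v}_n$ are already known to be bounded.

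After passing to subsequences with weak-$\ast$ limits $\hat{u}_n \overset{\ast}{\rightharpoonup} \hat{u}$ and $\hat{v}_n \overset{\ast}{\rightharpoonup} \hat{v}$, the identities $\partial_3 \hat{u} = -\nabla' v$ together with $\int_0^1 \hat{u} \, dx_3 = u$ (obtained from $u_n \overset{\ast}{\rightharpoonup} u$) uniquely determine $\hat{u}(t,x) = u(t,x') - (x_3 - \tfrac12)\nabla' v(t,x')$; analogously $\partial_3 \hat{v} = 1$, $\nabla' \hat{v} = \nabla' v$ and $\int_0^1 \hat{v} \, dx_3 = v$ force $\hat{v}(t,x) = v(t,x') + (x_3 - \tfrac12)$, and uniqueness of these limits upgrades the subsequential convergence to convergence of the full sequence. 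The remaining strong convergence \eqref{vn_3d_strong} is obtained by applying the one-dimensional Poincar\'e--Wirtinger inequality in $x_3$ to $\varphi_n := \hat{v}_n - v_n - (x_3 - \tfrac12)$, which has vanishing $x_3$-mean and whose $x_3$-derivative $\partial_3 \varphi_n = \partial_3 \hat{v}_n - 1$ tends to $0$ strongly in $L^\infty_\mathrm{loc}(I_T; L^2(\Ox))$; this forces $\varphi_n \to 0$ in the same space, and combining with the strong convergence $v_n \to v$ in $L^\infty_\mathrm{loc}(I_T; L^2_\mathrm{loc}(\R^2))$ concludes the argument.
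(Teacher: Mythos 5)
Your proposal is correct, and for the bulk of the statement it follows the same route as the paper: the $H^1$-bound on $\hat{u}_n$ comes from Korn's inequality combined with the bound on $h_n^{-2}\sym(R_n-\Id)$ from Proposition~\ref{prop: td-displacements-further} and the control of $\partial_3\hat{u}_n = h_n^{-1}(\nabla_n y_n)_{\cdot 3}'$ via \eqref{y_n - r_n} and \eqref{r_n - id}, and the limits are identified exactly as you do, through the vertical averages and the $\partial_3$-derivatives (the paper applies Korn once on the three-dimensional cylinder $(-L,L)^2\times(0,1)$ to the field $(\hat u_n^T,0)^T$ rather than slice-wise in $x'$, but this is immaterial). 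The genuine difference is in the last step, the strong convergence \eqref{vn_3d_strong}: the paper obtains it from the Aubin--Lions lemma, using that $\partial_t\hat v_n$ is bounded in $L^\infty_{\mathrm{loc}}(I_T;L^2(\Ox))$ by \eqref{bound-derivative}, whereas you apply the Poincar\'e--Wirtinger inequality in $x_3$ to $\varphi_n = \hat v_n - v_n - (x_3-\tfrac12)$, whose vertical mean vanishes and whose $x_3$-derivative tends to $0$ strongly, and then invoke the hypothesis \eqref{strong-vn} for the averages $v_n$. Your argument is more elementary in that it does not re-invoke time compactness at this stage; it simply transfers the already-assumed strong convergence of the vertical averages to the full three-dimensional quantity, at the price of relying on \eqref{strong-vn} (which is of course among the standing assumptions of the proposition, and whose proof in Proposition~\ref{prop: td-displacements} is where the time-derivative bound really enters). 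Both routes are sound; yours localizes the use of dynamic information more cleanly.
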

	
	\begin{proof}
		Choosing $L > 0$ so large that $\bar S \subset \subset (-L,L)^2$ and applying Korn's inequality to $(\hat{u}_n^T(t), 0)^T \in H^1((-L, L)^2 \times (0,1);\R^3)$, which vanishes on $((-L, L)^2 \setminus S_n) \times (0,1)$, we get 
		\begin{align*}
			\norm{ \hat{u}_n(t) }_{H^1(\Ox; \R^2)} \leq
			C \big( \norm{ \sym \ \nabla' \hat{u}_n(t) }_{L^2(\Ox; \R^{2 \times 2})} + \norm{\partial_3 \hat{u}_n(t)}_{L^2(\Ox; \R^2)} \big)
		\end{align*}
		for almost every $t$. By \eqref{y_n - r_n} and Proposition~\ref{prop: td-displacements-further} we recognize that the first term on the right hand side is bounded uniformly in $t$. Also the second term is bounded uniformly in $t$ by \eqref{y_n - r_n} and \eqref{r_n - id} since $\partial_3 \left(\hat{u}_n\right)_i = h_n^{-1} \left(\nabla_n y_n - Id\right)_{i3}$ for $i = 1,2$. This proves boundedness of $\hat{u}_n$ in $L^\infty_{\mathrm{loc}}(I_T; H^1(\Ox; \R^2))$. It remains to identify the limit. Since
		\[ \int_0^1 \hat{u}_n \ dx_3 \overset{\ast}{\rightharpoonup} u \quad \text{in} \ L^\infty_{\mathrm{loc}}(I_T; H^1(\R^2; \R^2)) \]
		and, for $i = 1,2$,
		\[ \partial_3 \left(\hat{u}_n\right)_i = h_n^{-1} \left(\nabla_n y_n - Id\right)_{i3} \overset{\ast}{\rightharpoonup} - \partial_i v \quad \text{in} \ L^\infty_{\mathrm{loc}}(I_T;  L^2(\Ox)) \]
		by \eqref{y_n - r_n} and Proposition~\ref{prop: td-displacements-further}. Therefore the limit is identified by \eqref{uhat} and subsequences were not needed. 
		
		For the convergence of $\hat{v}_n$ it is convenient to consider $\hat{w}_n(t,x) := \hat{v}_n(t,x) - (x_3 - \frac{1}{2})$. From \eqref{y_n - r_n} and \eqref{r_n - id}
		we get the bound
		\begin{align}
			\esssup_{t \in [0,T']} \norm{\nabla \hat{w}_n(t)}_{L^2(\Ox)}  \leq C(T'). \label{tag}
		\end{align}
        Applying Poincaré's inequality to $\hat{w}_n \in H^1((-L, L)^2 \times (0,1);\R^3)$, which, for $L$ as before, vanishes on $((-L, L)^2 \setminus S_n) \times (0,1)$, we obtain boundedness of $\hat{w}_n$ in $L^\infty_{\mathrm{loc}}(0,T;L^2(\Ox))$ and, hence, $\hat{w}_n \overset{\ast}{\rightharpoonup} \hat{w}$ in $L^\infty_{\mathrm{loc}}(I_T; H^1(\Ox))$ for some $\hat{w} \in L^\infty_{\mathrm{loc}}(I_T; H^1(\Ox))$. To identify $\hat{w}$ note that by \eqref{strong-vn}
		\[ \int_0^1 \hat w_n \ dx_3 = \int_0^1 \hat v_n \ dx_3 \to v \]
		and by \eqref{y_n - r_n} and \eqref{r_n - id}
		\[ \partial_3 \hat w_n \to 0 \]
		both in $L^\infty_{\mathrm{loc}}(I_T; L^2(\Ox))$.
		Hence the limit $\hat{v}$ is given by \eqref{vhat}.
		
		Finally the convergence \eqref{vn_3d_strong} follows from the Aubin-Lions lemma, since, due to \eqref{bound-derivative}, the mappings $\partial_t \hat{v}_n$ are bounded in $L^\infty_{\mathrm{loc}}(I_T; L^2(\Ox))$.
	\end{proof}

	We finally state here a result which will allow us to linearize the stored energy around a weakly convergent sequence. The proof in (\cite{MuellerPakzad_08}, Proposition 2.3) in the static case can be adapted in a straightforward way. 
	\begin{proposition}\label{prop: ws-linearization}
		Let $E \subset \R^n$ be a bounded, measurable set, $1 < p < \infty$. Let $f: \R^n \to \R^n$ be a function which is differentiable at $0$ and satisfies for every $a \in \R^n$ the inequality
		$$\abs{f(a)} \leq C \abs a.$$
		Let $z^\delta \overset{*}{\rightharpoonup} z$ in $L^\infty\left(0,T;L^p(E;\R^n)\right)$. Then
		$$\frac{1}{\delta} f(\delta z^{(\delta)}) \overset{*}{\rightharpoonup} Df(0)z \text{ in } L^\infty\left(0,T;L^p(E;\R^n)\right).$$
	\end{proposition}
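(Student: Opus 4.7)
The plan is to decompose $f(a)=Df(0)a+r(a)$, with the remainder $o(|a|)$ at the origin but at most linearly growing globally, and then exploit the differentiability of $f$ at $0$ on the region where $\delta z^\delta$ is small, while using only the global linear bound on $f$ on the (small-measure) region where $\delta z^\delta$ is large. The growth assumption $|f(a)|\le C|a|$ keeps the whole family uniformly bounded in the dual space, and hence allows Banach--Alaoglu-style limit arguments.

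First, $|f(a)|\le C|a|$ yields
\[
\bigl\|\tfrac{1}{\delta}f(\delta z^\delta)\bigr\|_{L^\infty(0,T;L^p(E;\R^n))}\le C\,\|z^\delta\|_{L^\infty(0,T;L^p(E;\R^n))},
\]
which is bounded since the weakly-$\ast$ convergent family $z^\delta$ is. By Banach--Alaoglu any subsequence admits a further weakly-$\ast$ convergent sub-subsequence, so it suffices to identify the limit uniquely. Writing $r(a):=f(a)-Df(0)a$, the splitting
\[
\tfrac{1}{\delta} f(\delta z^\delta)=Df(0)\,z^\delta+\tfrac{1}{\delta} r(\delta z^\delta)
\]
reduces matters to showing $\tfrac{1}{\delta}r(\delta z^\delta)\overset{*}{\rightharpoonup}0$, since the first summand trivially tends to $Df(0)z$ weakly-$\ast$.

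To this end, fix $\phi\in L^1(0,T;L^{p'}(E;\R^n))$ and $\eta>0$, and choose $\delta_0>0$ with $|r(a)|\le\eta|a|$ for $|a|\le\delta_0$; note also $|r(a)|\le(C+|Df(0)|)|a|$ globally. Split $E=A^t_\delta\cup(E\setminus A^t_\delta)$ with $A^t_\delta:=\{x\in E:|z^\delta(t,x)|\le\delta_0/\delta\}$. On $A^t_\delta$ one has $|\tfrac{1}{\delta}r(\delta z^\delta)|\le\eta|z^\delta|$, so Hölder bounds the corresponding contribution by $\eta\, C_0\,\|\phi\|_{L^1L^{p'}}$, where $C_0:=\sup_\delta\|z^\delta\|_{L^\infty L^p}$. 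On the complement, Chebyshev gives $|E\setminus A^t_\delta|\le C_0^p(\delta/\delta_0)^p$ uniformly in $t$, so for a.e.\ $t$ absolute continuity of the $L^{p'}$-integral yields $\|\phi(t)\mathbf 1_{E\setminus A^t_\delta}\|_{L^{p'}}\to 0$ as $\delta\to0$, while being dominated by $\|\phi(t)\|_{L^{p'}}\in L^1(0,T)$. Hölder in $x$ combined with dominated convergence in $t$ then sends the complementary contribution to zero. Taking $\limsup_{\delta\to0}$ and letting $\eta\to0$ closes the argument, and uniqueness of the limit upgrades it from subsequences to the full net.

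The main obstacle is precisely the large-value region $\{|\delta z^\delta|>\delta_0\}$, where differentiability of $f$ at $0$ provides no smallness and one is left with only the global linear bound on $r$. Turning the shrinking Lebesgue measure of this bad set into a genuine weak-$\ast$ vanishing of the pairing with a generic $\phi$ requires uniform-in-$t$ equi-integrability of the test function's $L^{p'}$-norm, which is delivered by $\|\phi(\cdot)\|_{L^{p'}}\in L^1(0,T)$ through dominated convergence.
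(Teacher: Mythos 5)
Your proof is correct and is essentially the argument the paper invokes: it refers to \cite{MuellerPakzad_08}, Proposition 2.3, whose proof uses exactly this decomposition $f = Df(0)\cdot + r$ together with the Chebyshev/truncation split on the set where $|\delta z^\delta|$ exceeds a threshold, and your dominated-convergence step in $t$ is precisely the "straightforward adaptation" to the time-dependent setting that the paper alludes to. (The only cosmetic omission is the remark that $f(0)=0$ follows from $|f(a)|\le C|a|$, so that $r(a)=f(a)-Df(0)a$ is indeed $o(|a|)$.)
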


    \subsection{The discrete, time-dependent strain}
    
    Consider a sequence $y_n$ with
		\[\esssup_{t \in [0,T']} E_n (y_n(t)) \leq C(T') h_n^4 \quad \text{for all} \ T' \in (0,T]. \]  
   	We need to find convergence of the discrete strain
	\[\bar{G}_n = h_n^{-2} \left( R_n^T \bar{\nabla}_n \bar y_n - Z \right). \] 

    From \cite[Proposition~1]{BraunSchmidt2022} we infer that then
    \begin{align}
        0 \leq \esssup_{t \in [0,T']} \int_\Ox \dist^2\left( \nabla_n {\tilde{y}}_n(t), SO(3)\right) \ dx \leq C h_n^4, \label{eq:energy-bound-interpolations}
    \end{align}
	where ${\tilde{y}}_n \in H^1_{ \mathrm{loc}}(\Ox; \R^3)$ is the piecewise affine interpolation of $y_n$.
	Thus we can use the results of \cite{AbelsMoraMueller_09} proven in Step 4 of their Theorem 2.1: The sequence
	\[G_n \coloneqq h_n^{-2} \left(R_n^T \nabla_n {\tilde{y}}_n - Id \right) \]  
	converges weakly-* to some $G$ in $L^\infty_{\mathrm{loc}}(I_T; L^2({\Ox}, \R^{3 \times 3}))$.
	The upper $2 \times 2$ matrix $G''$ of $G$ is affine in $x_3$, i.e.
    \begin{equation}
        G''(t,x) = G_1(t, x') + \Big(x_3 - \frac{1}{2}\Big)G_2(t,x')  \label{eq:G1G2}
    \end{equation}
	with $\sym \ G_1 = \frac{1}{2} \left( \nabla' u + \nabla' u ^T + \nabla' v \otimes \nabla' v \right)$ and $G_2 = - \nabla'^2 v$.
	The mappings $u$ and $v$ are the ones from Proposition~\ref{prop: td-displacements}. 
	The main task in this section is to identify the full limiting strain in $\R^{3 \times 8}$ . 
	
	We use the projections $P_n$ to piecewise constant functions, defined by
	\[ P_nf(x) = \dashint_{\tilde{Q}_n(x)} f(\xi) \ d \xi, \quad x \in \tilde{\Lambra}^\prime_n, \]
	on $\tilde{Q}_n(x)$, and $P$ defined by
	\[ P f(x) = \dashint_{\frac{k-1}{\nu - 1}}^{\frac{k}{\nu - 1}} f(x',t) \ dt \quad \text{if} \ \frac{k-1}{\nu - 1} \leq x_3 \leq \frac{k}{\nu -1}, \ k \in \{1, \ldots, \nu - 1\} \]
	if $\nu_n \equiv \nu \in \N$ and $P f = f$ if $\nu_n \to \infty$. We recall that for every $f \in L^2(\Ox)$ we have $P_nf \to Pf$ in $L^2(\Ox)$. 	
	
	\begin{proposition}\label{prop: strain-td}
		Let $(y_n)_n$ be a sequence of discrete deformations such that for every $T' \in (0,T)$
		\begin{equation}
			\esssup_{t \in [0,T']} E_n (y_n(t)) \leq C(T') h_n^4 \label{energy-bound-td}
		\end{equation}
		and the interpolations ${\tilde{y}}_n$ satisfy the assumptions of Proposition~\ref{prop: td-displacements}.
		Let
		\[ h_n^{-2} \left(R_n^T \nabla_n {\tilde{y}}_n - Id \right) \overset{*}{\rightharpoonup} G \quad \text{in} \ L^\infty_{\mathrm{loc}}(I_T; L^2(\Ox, \R^{3 \times 3})).\]
		Then
		\[\bar{G}_n \overset{*}{\rightharpoonup} \bar{G} \quad \text{in} \ L^\infty_{\mathrm{loc}}(I_T; L^2(\Ox,\R^{3 \times 8})),\]
		where
		\[ \bar{G} =
		\begin{cases}
			GZ & \text{if} \ \nu_n \to \infty, \\
			PGZ + \frac{1}{2(\nu - 1)}(\hat{G}_2 Z_- + \partial_{12} v \, M ) & \text{if} \ \nu_n \equiv \nu \in \N.
		\end{cases}
		\]
	\end{proposition}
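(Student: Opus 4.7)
My plan is to prove this in four conceptual steps that mirror the structure of the claimed limit.

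First, I would establish uniform boundedness of $\bar G_n$. Hypothesis \eqref{energy-bound-td} together with the lower bound \eqref{cellass5}, applied columnwise (valid since $\sum_i (\bar\nabla_n y_n)_{\cdot i} = 0$ by construction of $\bar\nabla_n$), yields
\[
\esssup_{t\in[0,T']} \int_\Ox \dist^2\bigl(\bar\nabla_n y_n(t,x), SO(3)Z\bigr)\, dx \le C(T') h_n^4.
\]
Combined with the rigidity approximations $R_n$ from Proposition~\ref{prop: td-displacements} and the bounds \eqref{y_n - r_n}--\eqref{r_n - id}, this gives boundedness of $\bar G_n$ in $L^\infty_{\mathrm{loc}}(I_T; L^2(\Ox, \R^{3\times 8}))$, so along a subsequence $\bar G_n \overset{*}{\rightharpoonup} \bar G$ for some $\bar G$ in this space.

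Second, I would decompose $\bar\nabla_n y_n$ into a piece lying in the row-span of $Z$ and a piece orthogonal to it. A direct computation on a single cube, using the divergence theorem together with the face-average identity \eqref{dashint-face}, produces the fundamental relation
\[
\dashint_{\tilde Q_n(x)} \nabla_n \tilde y_n(\xi)\, d\xi \;=\; \tfrac12\, \bar\nabla_n y_n(x)\, Z^T.
\]
Multiplying on the right by $Z$ and using $ZZ^T = 2 I_3$ shows that the orthogonal projection of $\bar\nabla_n y_n$ onto the row-span of $Z$ equals $P_n[\nabla_n \tilde y_n]\, Z$. Writing the orthogonal decomposition $\bar\nabla_n y_n = P_n[\nabla_n \tilde y_n]\, Z + S_n$ and using that $R_n$ is essentially constant on the horizontal atomic grid, one obtains $\bar G_n = P_n[G_n]\, Z + h_n^{-2} R_n^T S_n$. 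Since $G_n \overset{*}{\rightharpoonup} G$ with $G''$ affine in $x_3$ as in \eqref{eq:G1G2}, and since $P_n f \to P f$ strongly in $L^2_{\mathrm{loc}}$, the first summand converges weakly-$\ast$ to $P G Z$; this matches $GZ$ in the thin case and the first term of $\bar G$ in the ultrathin case.

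Third, I would identify the limit of the ``non-$Z$'' part $h_n^{-2} R_n^T S_n$. Using the plane-average formula \eqref{eq:plane-average} together with a Taylor expansion of $y_n$ in $x'$ around each cell center (whose second-order coefficients are controlled through $G_2 = -\nabla'^2 v$ and the Proposition~\ref{prop: td-displacements-3d} limits of $\hat u_n, \hat v_n$), one obtains a decomposition of $h_n^{-2} S_n$ into a ``bending'' contribution proportional to $\hat G_2 Z_-$ (carrying a prefactor $\varepsilon_n/h_n$ coming from the $x_3$-extent of the cell) and a ``twist'' contribution proportional to $\partial_{12} v\, M$ (arising from the $z^i_1 z^i_2$-components of the second-order Taylor terms). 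Frame indifference \eqref{eq: derivative-skew} and the linearization tool Proposition~\ref{prop: ws-linearization} are used to replace the $R_n$ factors by the identity up to symmetric corrections that contribute only to the $Z$-part already accounted for.

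In the \emph{thin} regime $\varepsilon_n/h_n \to 0$ both pieces of $h_n^{-2} S_n$ vanish in the limit, yielding $\bar G = GZ$; in the \emph{ultrathin} regime $\varepsilon_n/h_n = 1/(\nu-1)$ they survive and produce exactly $\tfrac{1}{2(\nu-1)}(\hat G_2 Z_- + \partial_{12} v\, M)$. The main obstacle is this ultrathin identification: one must track second-order Taylor remainders cell by cell, verify that lower-order remainders (which are only bounded in $L^2$) converge weakly to zero, and match the column patterns $Z_-$ and $M$ with the ``even/odd in $x_3$'' partition of cell corners. That $\hat G_2 Z_-$ and $\partial_{12} v\, M$ are themselves orthogonal to the row-span of $Z$---which can be verified by a direct computation from the explicit entries of $Z$---is precisely what makes the orthogonal decomposition of Step 2 consistent with the claimed form of $\bar G$.
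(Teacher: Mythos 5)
Your Steps 1 and 2 are sound and in fact coincide with the paper's strategy in different clothing: the paper also first proves boundedness of $\bar G_n$ (via the norm equivalence \eqref{eq: norm-eq-single-cube}, a cell-wise Poincar\'e inequality for $R_n - P_nR_n$, and \eqref{y_n - r_n}, \eqref{Dy_n - R_n}), and then splits $\R^8$ into the ``affine'' vectors $b^0=(1,\dots,1)$, $b^i=Z^Te_i$ and their orthogonal complement; your projection identity $\dashint_{\tilde Q_n(x)}\nabla_n\tilde y_n = \tfrac12\bar\nabla_n y_n(x)Z^T$ together with $ZZ^T=2\,\mathrm{Id}$ is a clean restatement of the paper's computation $P_n[R_n\bar G_n]b^i = \tfrac{2}{h_n^2}P_n[\partial_i\tilde y_n - R_ne_i]$, and your consistency check that $\hat G_2Z_-$ and $\partial_{12}v\,M$ are non-affine is correct.

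The genuine gap is in Step 3, which is where the whole content of the proposition sits. You propose to identify the orthogonal part $h_n^{-2}R_n^TS_n$ by ``a Taylor expansion of $y_n$ in $x'$ around each cell center'' with second-order coefficients ``controlled through $G_2=-\nabla'^2v$.'' No such expansion is available: $y_n$ is a discrete deformation whose second differences are not controlled pointwise or even strongly in $L^2$; the available information is only that $\hat v_n = h_n^{-1}(y_n)_3$ converges in $L^\infty_{\rm loc}(I_T;L^2)$ and weakly-$*$ in $L^\infty_{\rm loc}(I_T;H^1)$, so $\nabla'^2 v$ can only be reached distributionally. The argument must therefore be dualized, which is exactly what the paper does: it writes the non-affine components through the two-dimensional difference operator $\bar\nabla_n^{2d}$, splits into a vertical-difference term \eqref{td-strain-1} and a bottom-layer term \eqref{td-strain-2}, integrates against smooth test functions, moves the difference operators onto the test functions via the adjoint $(\bar\nabla_n^{2d})^*$, and Taylor-expands the \emph{test function}. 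The surviving $O(\eps_n/h_n)$ contributions — the limit \eqref{count3} of $\tfrac{1}{\eps_n}\big((\bar\nabla_n^{2d})^*\varphi + \nabla'\varphi Z^{2d}\big)$ paired with $h_n^{-1}(\bar y_n)_3$, and the limit \eqref{count4} of $P_n\tilde A_ne_3$ paired with $(\bar\nabla_n^{2d})^*\varphi$ — are what produce $\tfrac{1}{2(\nu-1)}(\hat G_2Z_- + \partial_{12}v\,M)$, with the $\partial_{12}v\,M$ term coming from the $i$-dependence of $\nabla'^2\varphi[(z^i)',(z^i)']$. Your proposal names the destination and acknowledges that ``lower-order remainders \dots\ converge weakly to zero'' must be verified, but it does not supply the mechanism that makes this verification possible; as written, the cell-wise expansion of $y_n$ would fail at the first step.
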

	The proof can be adapted from the corresponding result in the stationary setting discussed in 
	\cite{BraunSchmidt2022}. In the following we will omit all details of computations for a fixed time $t$ which can directly be inferred from \cite{BraunSchmidt2022}. 
	
	We start with the boundedness of $\bar{G}_n$ in $L^\infty_{\mathrm{loc}}(I_T;L^2(\Ox;\R^{3 \times 8}))$.
	
	\begin{lemma}\label{lem: strain-td-bound}
		The sequence $\bar{G}_n$ is bounded in $L^\infty_{\mathrm{loc}}(I_T;L^2(\Ox;\R^{3 \times 8}))$.
	\end{lemma}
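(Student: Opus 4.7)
The plan is to reduce the desired bound to the continuous $L^2$-estimate \eqref{y_n - r_n} from Proposition~\ref{prop: td-displacements} (applicable to ${\tilde{y}}_n$ thanks to \eqref{eq:energy-bound-interpolations}) by transferring the information cell by cell from the continuous gradient $\nabla_n {\tilde{y}}_n$ to the discrete gradient $\bar{\nabla}_n {\bar{y}}_n$. The first observation is that since $R_n$ is pointwise in $SO(3)$ and acts on the left of the $3\times 8$ matrix $\bar{\nabla}_n {\bar{y}}_n$ column by column,
\[
 |\bar{G}_n(t,x)|
 = h_n^{-2} \bigl| R_n^T(\bar{\nabla}_n {\bar{y}}_n - R_n Z) \bigr|
 = h_n^{-2}\bigl|\bar{\nabla}_n {\bar{y}}_n(t,x) - R_n(t,x') Z\bigr|
\]
pointwise, so it suffices to prove $\|\bar{\nabla}_n {\bar{y}}_n - R_n Z\|_{L^\infty_\mathrm{loc}(I_T; L^2(\Ox;\R^{3\times 8}))} \le C(T') h_n^2$.

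The key technical step is to replace $R_n$ by a piecewise constant surrogate. Let $P_n R_n$ denote the cell-wise average of $R_n$ on the rescaled cubes $\tilde{Q}_n(x)$, $x \in \tilde{\Lambra}^\prime_n$. Because $R_n$ depends only on $x'$, this is just a two-dimensional average over a square of side $\eps_n$, and a cell-wise Poincaré inequality combined with the $H^1$-bound \eqref{Dy_n - R_n} gives
\[
 \|P_n R_n - R_n\|_{L^\infty_\mathrm{loc}(I_T; L^2(K; \R^{3\times 3}))}
 \le C(T') \eps_n h_n \le C(T') h_n^2
\]
on every compact $K \subset \Oxc$, where we used $\eps_n \le h_n$ (since $\nu_n \ge 2$).

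Next I apply the cell-wise norm equivalence \eqref{eq: norm-eq-single-cube} in its rescaled form: for each $x \in \tilde{\Lambra}^\prime_n$, writing $R = (P_n R_n)(x)$, the fact that both sides of \eqref{eq: norm-eq-single-cube} are unchanged by subtracting the affine map $\xi \mapsto R\xi$ (whose discrete gradient is $RZ$ and whose rescaled continuous gradient is $R$) yields
\[
 c\,\bigl|\bar{\nabla}_n {\bar{y}}_n(x) - R Z\bigr|^2
 \le \dashint_{\tilde{Q}_n(x)} |\nabla_n {\tilde{y}}_n(\xi) - R|^2 \, d\xi
 \le C\,\bigl|\bar{\nabla}_n {\bar{y}}_n(x) - R Z\bigr|^2.
\]
Summing over cells and using that $P_n R_n$ is constant on each $\tilde Q_n(x)$ gives
\[
 \|\bar{\nabla}_n {\bar{y}}_n - (P_n R_n)Z\|_{L^2(\Ox)}^2
 \le C \|\nabla_n {\tilde{y}}_n - P_n R_n\|_{L^2(\Ox)}^2
 \le C\bigl(\|\nabla_n {\tilde{y}}_n - R_n\|_{L^2(\Ox)}^2 + \|R_n - P_n R_n\|_{L^2(\Ox)}^2\bigr).
\]
The first term on the right is $\le C(T') h_n^4$ by Proposition~\ref{prop: td-displacements}, and the second is $\le C(T') h_n^4$ by the previous step. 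Adding back $|Z|\,\|P_n R_n - R_n\|_{L^2}$ to recover $R_n$ in place of $P_n R_n$ yields $\|\bar{\nabla}_n {\bar{y}}_n - R_n Z\|_{L^2(\Ox)} \le C(T') h_n^2$, which is the desired bound.

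The main obstacle I anticipate is bookkeeping around the anisotropic rescaling $H_n$: one must check that \eqref{eq: norm-eq-single-cube} transfers to the rescaled cubes with a rescaling-independent constant. This is not truly an obstacle, since \eqref{eq: norm-eq-single-cube} is an algebraic inequality on the eight corner values of each cube and transforms compatibly under $H_n$, but it is the one place where careful attention is needed. Everything else is straightforward application of the continuum result together with the cell-wise comparison.
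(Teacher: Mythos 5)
Your proposal is correct and follows essentially the same route as the paper: both arguments pass to the cell averages $P_nR_n$, apply the cell-wise norm equivalence \eqref{eq: norm-eq-single-cube} to the deformation minus the affine map with constant gradient $P_nR_n(x)$, control $R_n - P_nR_n$ via the Poincaré inequality on each cell together with \eqref{Dy_n - R_n}, and conclude by summing over cells and invoking \eqref{y_n - r_n}. The only differences are cosmetic (the explicit remark that $R_n^T$ preserves the Frobenius norm, and the order of the triangle inequalities).
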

	
	\begin{proof}
    By \eqref{eq: norm-eq-single-cube} we have for $x \in \tilde{\Lambra}_n^\prime$
		\begin{align*}
			\int_{\tilde{Q}_n(x)} |\bar{\nabla}_n \bar y_n - P_n R_n Z|^2 \ d \xi 
			&= \frac{\varepsilon_n^3}{h_n} |\bar{\nabla}_n \bar y_n(x) - P_n R_n(t,x') Z|^2 \\
			&\leq C \int_{\tilde{Q}_n(x)} |\nabla_n {\tilde{y}}_n - P_n R_n^2 |^2 \ d \xi.
		\end{align*}
    The Poincaré inequality on $\tilde{Q}_n(x)$ gives 
		\begin{align*}
			\int_{\tilde{Q}_n(x)} |(R_n - P_n R_n) Z|^2 \ d \xi 
  			&\leq C \int_{\tilde{Q}_n(x)} |R_n - P_n R_n|^2 \ d \xi 
  			\leq C h_n^2 \int_{\tilde{Q}_n(x)} |\nabla' R_n|^2 \ d \xi.
		\end{align*}
    Combining these estimates we arrive at 
		\begin{align*}
			\int_{\tilde{Q}_n(x)} |\bar{G}_n|^2 \ d \xi
			&= h_n^{-4} \int_{\tilde{Q}_n(x)} |\bar{\nabla}_n \bar y_n - R_n Z|^2 \ d \xi \\
			&\leq C h_n^{-4} \int_{\tilde{Q}_n(x)} |\nabla_n {\tilde{y}}_n(t,\xi) - R_n|^2 \ d \xi + C h_n^{-2} \int_{\tilde{Q}_n(x)} |\nabla' R_n|^2 \ d \xi. 
		\end{align*} 
    Now, since $\bar{G}_n = 0$ outside a neighborhood of $S$, summing over all cubes we obtain the claim from \eqref{y_n - r_n} and \eqref{Dy_n - R_n}.
   	\end{proof}

	\begin{proof}[Proof of Proposition~\ref{prop: strain-td}]
		Compactness follows from Lemma~\ref{lem: strain-td-bound}, i.e. there is a $\bar{G} \in L^\infty_{\mathrm{loc}}(I_T; L^2(\Ox; \R^{3 \times 8}))$ such that, up to a subsequence, $\bar{G}_n \overset{\ast}{\rightharpoonup} \bar{G}$ in $L^\infty_{\mathrm{loc}}(I_T; L^2(\Ox; \R^{3 \times 8}))$. It remains to identify this limit. As $R_n \to Id$ boundedly in measure on $(0,T') \times \Ox$ for every $T' \in (0,T)$ we have
		\[R_n \bar{G}_n = h_n^{-2}(\bar{\nabla}_n \bar y_n - R_n Z) \overset{\ast}{\rightharpoonup} \bar{G}\]
		in $L^\infty_{\mathrm{loc}}(I_T; L^2(\Ox; \R^{3 \times 8}))$ as well as
		\[h_n^{-2} (\nabla_n {\tilde{y}}_n - R_n) \overset{\ast}{\rightharpoonup} G \]
		in $L^\infty_{\mathrm{loc}}(I_T;L^2(\Ox;\R^{3 \times 3}))$. We distinguish between the affine parts and the non-affine parts. A $b \in \R^8$ is called affine if it is in the linear span of the vectors $b^0, \ldots, b^3$ with $b^0 = (1, \ldots, 1)$ and $b^i = Z^T e_i$ for $i \in \{1,2,3\}$. All vectors orthogonal to the affine vectors are called non-affine. They are characterized by $\sum_{i=1}^{8} b_i  = 0$ and $Zb = 0$.
		We start with the affine parts. For every $n \in \N$ we have $R_n \bar{G}_n b^0 = 0$ and therefore $\bar{G}b^0 = 0$. For $t$ fixed we get from the calculations in \cite{BraunSchmidt2022}, Proposition 4 that $P_n[R_n  \bar{G}_n] b^i
		= \frac{2}{h_n^2} P_n [ \partial_i {\tilde{y}}_n - R_n e_i ]$  for $i = 1,2$ and $P_n[R_n  \bar{G}_n] b^3
		= \frac{2}{h_n^2} P_n [h_n^{-1} \partial_3 {\tilde{y}}_n - R_n e_3 ]$ for $i = 3$. Thus
		\begin{align*}
			P_n[R_n  \bar{G}_n] b^i
			\overset{\ast}{\rightharpoonup} 2 PG e_i = PGZ b^i
		\end{align*}
		for $i = 1,2,3$. Summarized, for every affine vector $b$ we have $\bar{G} b = PGZb$.
		
		For a non-affine vector $b$ we write $b^{(1)}=(b_1,b_2,b_3,b_4)^T$ and $b^{(2)}=(b_5,b_6,b_7,b_8)^T$. Further we consider the two-dimensional difference operator
		\[ \bar{\nabla}_n^{2d} f(x) \coloneqq \frac{1}{\varepsilon_n} \bigg( f(x' + \varepsilon_n(z^i)', x_3) - \frac{1}{4} \sum_{j=1}^{4} f(x' + \varepsilon_n (z^j)', x_3) \bigg)_{i = 1,2,3,4}. \]
		By $(\bar{\nabla}_n^{2d})^\ast$ we denote its formal adjoint determined by the relation
		\[ \sum_{ x \in \tilde{\Lambra}^\prime_n} \bar{\nabla}_n^{2d} f(x) \colon H(x) = \sum_{ \omega \in \tilde{\Lambra}_n} f(\omega) \cdot \big((\bar{\nabla}_n^{2d})^\ast H \big) (\omega) \]
		for every $H: \tilde{\Lambra}^\prime_n \to \R^{3 \times 4}$ with compact support.
		Then, as shown in \cite{BraunSchmidt2022}, (44) and (45),
		\begin{align}
			R_n \bar{G}_n(t,x) b 
			&= h_n^{-2} \bar{\nabla}_n \bar{y}_n(t,x) b \nonumber \\
			&= h_n^{-2} \Big( \bar{\nabla}_n^{2d} \bar{y}_n\Big(t,x+ \frac{\varepsilon_n}{2h_n} e_3\Big) - \bar{\nabla}_n^{2d} \bar{y}_n\Big(t,x - \frac{\varepsilon_n}{2h_n} e_3\Big)  \Big) b^{(2)}  \label{td-strain-1}\\
			&\qquad + h_n^{-2} \bar{\nabla}_n^{2d} \bar{y}_n\Big(t,x- \frac{\varepsilon_n}{2h_n} e_3\Big) \big( b^{(1)} + b^{(2)} \big) \label{td-strain-2} .
		\end{align}
		We treat \eqref{td-strain-1} and \eqref{td-strain-2} separately and start with \eqref{td-strain-2}. Let $\varphi(t,x) = \eta(t) \Psi(x)$ with $\eta \in C_c^\infty(0,T')$ and $\Psi \in C_c^\infty(\Ox; \R^3)$. Then, with $Z^{2d} = \bigl((z^1)^\prime, (z^2)^\prime, (z^3)^\prime, (z^4)^\prime \bigr) \in \R^{2 \times 4} $, 
		\[ \big(\bar{\nabla}_n^{2d}\big)^\ast \varphi(t,x) \to - \nabla' \varphi(t,x)Z^{2d} \]
		uniformly and therefore for $i \in \{1,2\}$, by Proposition~\ref{prop: td-displacements-3d}, Lemma~\ref{lemma: weak-*-interpolations} and due to $Z^{2d}(b^{(1)} + b^{(2)}) = 0$,
		\begin{align}
            \MoveEqLeft
			h_n^{-2} e_i^T \int_0^{T'} \int_\Ox \bar{\nabla}_n^{2d} \big(\bar{y}_n - \bar{\rm id} \big)\Big(t,x- \frac{\varepsilon_n}{2h_n} e_3\Big) \big( b^{(1)} + b^{(2)} \big) \varphi(t,x) \ dx \ dt \nonumber \\
			&= h_n^{-2} e_i^T \int_0^{T'} \int_\Ox \big(\bar{y}_n - \bar{\rm id} \big)\Big(t,x- \frac{\varepsilon_n}{2h_n} e_3\Big) \big(\bar{\nabla}_n^{2d}\big)^\ast \varphi(t,x) \big(b^{(1)} + b^{(2)}\big) \ dx \ dt  \nonumber \\
			&\to - \int_0^{T'} \int_\Ox \hat{u}_i(t,\tilde{x}) \nabla' \varphi(t,x) Z^{2d} \big( b^{(1)} + b^{(2)} \big) \ dx \ dt = 0,\label{count1}
		\end{align}
		where $\tilde{x} = x$ if $\nu_n \to \infty$ and $\tilde{x} = (x', \frac{\lfloor (\nu - 1) x_3 \rfloor }{\nu - 1})$. For $i = 3$ we have
		\begin{align}
			\MoveEqLeft 
			h_n^{-2} e_3^T \int_0^{T'} \int_\Ox \bar{\nabla}_n^{2d} \bar{y}_n\Big(t,x-\frac{\varepsilon_n}{2h_n}e_3\Big) \big( b^{(1)} + b^{(2)} \big) \varphi(t,x) \ dx \ dt \nonumber \\
			&= \frac{\varepsilon_n}{h_n} \int_0^{T'} \int_\Ox \frac{\left(\bar{y}_n\right)_3 (t, x - \frac{\varepsilon_n}{2h_n} e_3 )}{h_n} \cdot \frac{\big( \bar{\nabla}_n^{2d} \big)^\ast \varphi(t,x)  
			+ \nabla_n^\prime \varphi(t,x) Z^{2d}}{\varepsilon_n} \big( b^{(1)} + b^{(2)} \big) \ dx \ dt. \label{bla}
		\end{align}
		Now
		\begin{align}
            \MoveEqLeft
			\frac{1}{\varepsilon_n} \big( \big(\bar{\nabla}_n^{2d} \big)^\ast \varphi(t,x) + \nabla_n^\prime \varphi(t,x) Z^{2d} \big) \nonumber \\
			&\to \bigg( \frac{1}{2} {\nabla'}^2 \varphi(t,x) \left[ (z^i)', (z^i)' \right]
			- \frac{1}{8} \sum_{j=1}^{4} {\nabla'}^2 \varphi(t,x) \left[(z^j)', (z^j)' \right]
			\bigg)_{i = 1,2,3,4} \label{verweis}
		\end{align}
		uniformly. By Proposition~\ref{prop: td-displacements-3d} together with Proposition~\ref{prop: equiv-convergences} if $\nu_n \to \infty$ from \eqref{bla} we get
		\begin{equation}
			h_n^{-2} e_3^T \int_0^{T'} \int_\Ox \bar{\nabla}_n^{2d} \bar{y}_n\Big(t,x - \frac{\varepsilon_n}{2h_n} e_3 \Big) \big(b^{(1)} + b^{(2)} \big) \varphi(t,x) \ dx \ dt \to 0. \label{count2}
		\end{equation}
		For $\nu_n \equiv \nu \in \N$ instead by Proposition~\ref{prop: td-displacements-3d} and Proposition~\ref{prop: equiv-convergences} it follows from \eqref{bla} and \eqref{verweis} that
		\begin{align}
            \MoveEqLeft
			h_n^{-2} e_3^T \int_0^{T'} \int_\Ox \bar{\nabla}_n^{2d} \bar{y}_n\Big(t,x - \frac{\varepsilon_n}{2h_n} e_3 \Big) \big(b^{(1)} + b^{(2)} \big) \varphi(t,x) \ dx \ dt \nonumber \\
			&\to \frac{1}{\nu - 1} \int_0^{T'} \int_\Ox \Big( \frac{1}{2}  {\nabla'}^2 v \big[ (z^i)', (z^i)' \big] \Big)_{i = 1,2,3,4} \big( b^{(1)} + b^{(2)} \big) \varphi(t,x) \ dx \ dt. \label{count3}
		\end{align}
		For \eqref{td-strain-1} let $\varphi$ as above. As shown in \cite{BraunSchmidt2022}, for fixed $t$ and integrating afterwards it holds that 
		\begin{align*}
            \MoveEqLeft
			\int_0^{T'} \int_\Ox h_n^{-2} \Big( \bar{\nabla}_n^{2d} \bar{y}_n\Big(t,x+ \frac{\varepsilon_n}{2h_n} e_3\Big) - \bar{\nabla}_n^{2d} \bar{y}_n\Big(t,x - \frac{\varepsilon_n}{2h_n} e_3\Big)  \Big) b^{(2)} \cdot \varphi(t,x) \ dx \ dt \\
			&= \frac{\varepsilon_n}{h_n} \int_0^{T'} \int_{\Ox} P_n \tilde{A}_n(t,x) e_3 \cdot \big(\bar{\nabla}_n^{2d}\big)^\ast \varphi(t,x) \, b^{(2)} \ dx \ dt
		\end{align*}
		with $\tilde{A}_n = \frac{\nabla_n \tilde{y}_n - Id}{h_n}$. Since $\tilde{A}_n \overset{\ast}{\rightharpoonup} A = e_3 \otimes \nabla'v - \nabla'v \otimes e_3$ in $L^\infty_{\mathrm{loc}}(I_T; L^2(\Ox; \R^{3 \times 3}))$ by \eqref{y_n - r_n} and \eqref{A_n-weak-*} we have $P_n \tilde{A}_n \overset{\ast}{\rightharpoonup} PA$ and thus
		\begin{equation}
			\int_0^{T'} \int_\Ox h_n^{-2} \Big( \bar{\nabla}_n^{2d} \bar{y}_n\Big(t,x+ \frac{\varepsilon_n}{2h_n} e_3\Big) - \bar{\nabla}_n^{2d} \bar{y}_n\Big(t,x - \frac{\varepsilon_n}{2h_n} e_3\Big)  \Big) b^{(2)} \cdot \varphi(t,x) \ dx \ dt \to 0 \label{count5}
		\end{equation}
		if $ \nu_n \to \infty$ and
		\begin{align}
            \MoveEqLeft
			\int_0^{T'} \int_\Ox h_n^{-2} \Big( \bar{\nabla}_n^{2d} \bar{y}_n\Big(t,x+ \frac{\varepsilon_n}{2h_n} e_3\Big) - \bar{\nabla}_n^{2d} \bar{y}_n\Big(t,x - \frac{\varepsilon_n}{2h_n} e_3\Big)  \Big) b^{(2)} \cdot \varphi(t,x) \ dx \ dt \nonumber \\
			&\to - \frac{1}{\nu - 1} \int_0^{T'} \int_\Ox PA e_3 \cdot \nabla' \varphi(t,x) Z^{2d} b^{(2)} \ dx \ dt \nonumber \\
			&= - \frac{1}{\nu - 1} \int_0^{T'} \int_\Ox \matr{ \nabla'^2 v(t,x') Z^{2d} b^{(2)} \\ 0 } \cdot \varphi(t,x) \ dt \ dx.   \label{count4}
		\end{align}																																
		This finishes the investigations of the relevant convergences. Summarized, for every non-affine $b \in \R^8$ in case $\nu_n \to \infty$ we get $\bar{G}b = 0$ by \eqref{count1}, \eqref{count2} and \eqref{count5}. If $\nu_n \equiv \nu \in \N$ by \eqref{count1}, \eqref{count3} and \eqref{count4} we obtain $\bar{G}b = (PGZ + \frac{1}{2(\nu - 1)} (\hat{G}_2 Z_- + \partial_{12} v \, M ))b$ for every non-affine $b \in \R^8$ after repeating the calculations of \cite{BraunSchmidt2022} for fixed $t$ and integrating in time afterwards. Thus for every $b \in \R^8$ it holds that $\bar{G}b = GZ$ if $\nu_n \to \infty$ and $\bar{G}b = (PGZ + \frac{1}{2(\nu - 1)}(\hat{G}_2 Z_- + \partial_{12} v \, M ))b$ for every $b \in \R^8$.
	\end{proof}

\subsection{Consequences of the equations of motion}

	Throughout this section we assume that $y_n$ is a sequence of (extended) discrete deformations satisfying \eqref{dyn_weak_sltn} as well as the energy bounds 
	\begin{align}
		\esssup_{t \in [0,T']} E_n(y_n(t)) \leq C(T') h_n^4, \\
		\esssup_{t \in [0,T']} \frac{\varepsilon_n^3}{h_n} \sum_{x \in \tilde{\Lambra}_n } \abs{\partial_t y_n(t,x)}^2 \leq C(T') h_n^2\label{bound-derivative-td}
	\end{align}
	for every $T' \in I_T$. 
	We define
	\begin{align*}
		J^n(t,x) &\coloneqq h_n^{-2} DW_{\cell} (Z + h_n^2 \bar{G}_n(t,x)), \\
		J^{(j,n)}(t,x) &\coloneqq h_n^{-2} DW_{\surf}(Z^{(1)} + h_n^2 \bar{G}_n^{(j)}(t,x)), \quad j = 1,2. 
	\end{align*}
	It is an immediate consequence of the growth conditions on $DW_{\cell}$ and $DW_{\surf}$ that all of these mappings are bounded in $L^\infty_{\mathrm{loc}}( I_T; L^2(\Ox;\R^{3 \times 8}))$. By Proposition~\ref{prop: ws-linearization} we have the convergences
	\begin{align}
		J^n \overset{\ast}{\rightharpoonup} J \coloneqq D^2W_{\cell}(Z)[\bar{G}] \quad &\text{in} \ L^\infty_{\mathrm{loc}}(I_T; L^2(\Ox; \R^{3 \times 8})), \label{conv-Jn} \\
		J^{(j,n)} \overset{\ast}{\rightharpoonup} J^{(j)} \coloneqq  D^2W_{\surf}(Z^{(1)})[\bar{G}^{(j)}] \quad &\text{in} \ L^\infty_{\mathrm{loc}}(I_T; L^2(\Ox; \R^{3 \times 4})), \quad j = 1,2. \label{conv-Jnsurf}
	\end{align}
	We combine these contributions as 
    \begin{align*}
        \tilde{J}^n(t,x) 
		&:= h_n^{-2} D_A W(x_3, Z + h_n^2 \bar{G}_n(t,x)) \\
		&= J^n(t,x) + \chi_{(0, \frac{1}{\nu_n - 1})}(x_3) (J^{(1,n)}(t,x), 0) + \chi_{(\frac{\nu_n - 2}{\nu_n - 1},1 )}(x_3) (0, J^{(2,n)}(t,x))
    \end{align*}
    and write
    \begin{align}\label{eq: cor: scaled-strain-td-1}
        \tilde J(t,x) =
        J(t,x) + \big( (\chi_{( 0, \frac{1}{\nu - 1})}(x_3) J^{(1)}(t,x), \chi_{( \frac{\nu - 2}{\nu - 1},1)}(x_3) J^{(2)}(t,x) \big)
    \end{align}
    for $\nu \in \{2, 3, \ldots, \infty\}$, which equals $J(t,x)$ if $\nu = \infty$ and is piecewise constant in $x_3$ in case $\nu \in \N$. By \eqref{conv-Jn} and \eqref{conv-Jnsurf} we have  
	\begin{align}\label{eq: cor: scaled-strain-td-2}h_n
        ^{-2} D_A W(\cdot, Z + h_n^2 \bar{G}_n(\cdot)) \overset{\ast}{\rightharpoonup} \tilde{J} \quad \text{in} \ L^\infty_{\mathrm{loc}}(I_T; L^2(\Ox; \R^{3 \times 8})).
	\end{align}

	It is useful to write the weak form of the equations of motion \eqref{dyn_weak_sltn} in terms of $\tilde{J}^n$. 
	To this end, we let $\varphi(t,x) = \eta(t) \psi(x)$ such that $\psi \in C^{0,1}(\Ox)$ is compactly supported in $S \times [0,1]$ and $\eta \in C_c^\infty(0,T')$. After point evaluation on the grid points $x \in \tilde{\Lambra}_n$ and subsequent interpolation we get from \eqref{dyn_weak_sltn} for large enough $n$
		\begin{align}\label{eq:ODE-with-J}
			0 = \int_{0}^{T'} \int_{\Ox_{\nu_n}} \partial_t \bar y_n \cdot \partial_t \bar \varphi \ dx \ dt 
			-\int_{0}^{T'} \int_\Ox R_n \tilde J^n \colon \bar{\nabla}_n \bar \varphi \ dx \ dt 
			+ h_n \int_0^{T'} \int_{\Ox_{\nu_n}} \bar g \bar \varphi_3. 
		\end{align} 
	
	\begin{proposition}\label{prop: orth-tilde-J}
		The map $\tilde J$ satisfies
		\[ \sum_{l=1}^{4} \tilde J_{\cdot l} = \sum_{l=5}^{8} \tilde{J}_{\cdot l}. \]
		In particular $\tilde J(t,x) \perp (\R^3 \otimes Z) e_3$ for almost every $(t,x) \in I_T \times \Ox$.
	\end{proposition}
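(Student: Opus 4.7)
I would test the weak equations of motion \eqref{eq:ODE-with-J} against the family
\[
 \varphi(t,x) = h_n\, \eta(t)\, \psi(x')\, \zeta(x_3)\, b,
\]
with $\eta \in C^\infty_c(0,T')$, $\psi \in C^\infty_c(S)$, $\zeta$ smooth (in the ultrathin case only its values at the $\nu$ layer heights matter), and $b \in \R^3$ arbitrary. Since $\psi$ is compactly supported in $S$, $\varphi$ vanishes on $\tilde{\Lambra}_n \setminus \tilde{\Lambda}_{n,S}$ for large $n$ and is thus admissible. The prefactor $h_n$ is chosen so that $\bar{\nabla}_n \bar{\varphi}$ is of order $1$, matching the scale \eqref{eq: cor: scaled-strain-td-2} on which $\tilde{J}^n \overset{\ast}{\rightharpoonup} \tilde J$.

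Taylor-expanding $\bar\partial_i^n \varphi$ as defined in \eqref{discrete-partial-derivative-rescaled} and using $\sum_j z_3^j = 0$ to cancel the a priori divergent $h_n/\eps_n$ contribution, one finds that uniformly on compacts
\[
 \bar{\nabla}_n \bar{\varphi}(t,x) \longrightarrow \eta(t)\,\psi(x')\,\mu(x_3)\, (b \otimes e_3) Z,
\]
where $\mu(x_3) = \zeta'(x_3)$ in the thin regime $\eps_n/h_n \to 0$, and
\[
 \mu(x_3) = (\nu-1)\Big(\zeta\big(\tfrac{k+1}{\nu-1}\big) - \zeta\big(\tfrac{k}{\nu-1}\big)\Big) \quad \text{for } x_3 \in \Big(\tfrac{k}{\nu-1},\tfrac{k+1}{\nu-1}\Big),\ k = 0,\dots,\nu-2,
\]
in the ultrathin regime $\nu_n \equiv \nu$. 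By the kinetic bound \eqref{bound-derivative-td}, both $\|\partial_t\bar y_n\|_{L^2}$ and $\|\partial_t\bar\varphi\|_{L^2}$ are of order $h_n$, so the first term of \eqref{eq:ODE-with-J} is $O(h_n^2)$, and the body-force term is likewise $O(h_n^2)$. Using $R_n \to \Id$ boundedly in measure by \eqref{r_n - id} together with the weak-$\ast$ convergence \eqref{eq: cor: scaled-strain-td-2}, the middle term of \eqref{eq:ODE-with-J} converges, in the limit, to
\[
 \int_0^{T'}\!\int_\Ox \eta(t)\,\psi(x')\,\mu(x_3)\, \tilde J(t,x) : (b \otimes e_3)Z \ dx \ dt = 0
\]
for every admissible $\eta,\psi,\zeta,b$.

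In the thin case $\mu = \zeta'$ spans a dense subspace of $L^2((0,1))$, so $\tilde J:(b\otimes e_3)Z = 0$ almost everywhere for every $b$. In the ultrathin case, $\tilde J$ is piecewise constant in $x_3$ on the $\nu-1$ layers by \eqref{eq: cor: scaled-strain-td-1}, so the $x_3$-integral collapses to $\tfrac{1}{\nu-1}\sum_{k=0}^{\nu-2}(\zeta_{k+1}-\zeta_k)\,\tilde J_k:(b\otimes e_3)Z$ with $\zeta_k = \zeta(k/(\nu-1))$; as the differences $(\zeta_{k+1}-\zeta_k)_k$ range freely over $\R^{\nu-1}$, each layer value of $\tilde J$ must be orthogonal to $(\R^3\otimes e_3)Z$. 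The $l$-th column of $(b\otimes e_3)Z$ equals $z_3^l\, b$, hence
\[
 \tilde J:(b\otimes e_3)Z = \tfrac{1}{2}\, b \cdot \Big(\sum_{l=5}^8 \tilde J_{\cdot l} - \sum_{l=1}^4 \tilde J_{\cdot l}\Big),
\]
so orthogonality for every $b$ is precisely the desired identity $\sum_{l=1}^4 \tilde J_{\cdot l} = \sum_{l=5}^8 \tilde J_{\cdot l}$. The main obstacle is the precise identification of the limit of $\bar{\nabla}_n\bar\varphi$ in the ultrathin regime, where $\eps_n/h_n$ stays of order one so $\mu$ must be read off as a discrete finite-difference rather than a derivative; here the $O(h_n^2)$ smallness of the inertial term (rather than only $O(h_n)$) is crucial, since no time regularity of $\tilde J$ is at our disposal.
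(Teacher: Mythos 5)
Your proposal is correct and follows essentially the same route as the paper: test \eqref{eq:ODE-with-J} with an $h_n$-scaled, $x_3$-dependent test function so that the discrete gradient produces an order-one profile proportional to $(b\otimes e_3)Z$, observe that the inertial and force terms are then $o(1)$, and pass to the limit in the stress term using $\tilde J^n \overset{\ast}{\rightharpoonup}\tilde J$ and $R_n\to\Id$. The only (cosmetic) difference is that the paper realizes the vertical profile via an antiderivative $\int_0^{x_3}\Psi$ in the thin regime and via layer-indicator piecewise-linear interpolants in the ultrathin regime, whereas you use a single family $\psi(x')\zeta(x_3)b$ whose derivative, respectively layer differences, span the same dense classes.
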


	\begin{proof}
        It suffices to check these assertions on $I_T \times S \times (0,1)$ since $\tilde J$ vanishes elsewhere.
                
		Suppose first $\nu_n \to \infty$. Let $T' \in I_T$ and $\varphi(t,x) = \eta(t) \int_0^{x_3} \Psi(x',\xi_3) \, d\xi_3$ with $\eta \in C_c^\infty(0,T')$ and $\Psi \in C^\infty(\Ox; \R^3)$ compactly supported in $S \times [0,1]$. Then
		\begin{equation}
			h_n \bar{\nabla}_n \bar \varphi \to \left(- \frac{\partial_3 \varphi}{2} , \ldots, \frac{\partial_3 \varphi}{2} , \ldots \right) \label{hngradient}
		\end{equation}
		uniformly and therefore also in $L^1(0,T'; L^2(\Ox; \R^3))$. 
		
		Inserting $\varphi$ into \eqref{eq:ODE-with-J} and, after multiplication by $h_n$, passing to the limit $n \to \infty$ in that equation, with the help of \eqref{bound-derivative-td} and \eqref{hngradient} we get 
		\begin{align*}
			0 =\underset{n \to \infty}{\longrightarrow} - \frac{1}{2} \int_0^{T'} \int_\Ox J \colon \left(- \partial_3 \varphi, \ldots, \partial_3 \varphi, \ldots \right) \ dx \ dt.
		\end{align*}
		By density we obtain for every $\chi \in L^1(0,T';L^2(S \times (0,1)))$
		\[ 0 = \int_0^{T' } \int_{S \times (0,1)} J \colon \left( -\chi, \ldots, \chi \ldots \right) \ dx \ dt, \]
	from which the claim follws. 
	
	Now suppose $\nu_n \equiv \nu \in \{2, 3, \ldots\}$. 
       Let $l \in \{1,\ldots,\nu-1\}$ and suppose $\eta \in C_c^\infty(0,T')$, $\chi \in C_c^\infty(S; \R^3)$ and $\phi(t,x') = \eta(t) \chi(x')$. 
        Set $\phi_0 = \ldots = \phi_{l-1} = \phi$ and $\phi_l = \ldots = \phi_{\nu-1} = 0$. 
        For $s \in [\frac{m-1}{\nu - 1} , \frac{m}{\nu - 1})$, $m = 1, \ldots, \nu - 1$, we interpolate linearly between the layers, i.e. we set
		\begin{align*}
			\varphi(t,x',s) 
			&= \big( m - (\nu - 1) s \big) \phi_{m-1}(t,x') 
			+ \big( (\nu - 1)s  - (m-1) \big) \phi_m(t,x').
		\end{align*}
		Then
		\begin{align}
			h_n \bar{\nabla}_n \varphi\Big(t,x', \frac{2m - 1}{2(\nu - 1)}\Big) 
			\underset{n \to \infty}{\longrightarrow}
			\frac{\nu - 1}{2}  \big( \phi(t,x'), \ldots, - \phi(t,x'), \ldots \big) \, \delta_{lm} \label{hngradient-fl}
		\end{align}
		uniformly. Again we insert $\varphi$ into \eqref{eq:ODE-with-J} and, after multiplying by $h_n$, pass to the limit $n \to \infty$ so as to get 
		\begin{align*}
			0 =
			-\frac{\nu - 1}{2} \int_0^{T'} \int_{{ \R^2} \times (\frac{l-1}{\nu-1}, \frac{l}{\nu-1})} \tilde{J}\colon (\phi, \ldots, -\phi, \ldots) \ dx \ dt
		\end{align*}
		with the help of \eqref{bound-derivative-td} and \eqref{hngradient-fl}, and hence indeed for every $\chi \in L^1(0,T';L^2(S))$
		\[ 0 = \int_0^{T'} \int_S \tilde{J}\Big(t,x', \frac{2l - 1}{2(\nu - 1)}\Big) \colon \left( -\chi(t,x'), \ldots, \chi(t,x') \ldots \right) \ dx' \ dt,\]
        which implies the claim since $\tilde{J}$ is piecewise constant in $x_3$. 
    \end{proof}
	
	With all these preparations we can now identify the zeroth and first moment in $x_3$ of the limiting strain $\tilde{J}$.  
	\begin{corollary}\label{cor:summary-tilde-J}
		The vertical average and first moment of $\tilde J$ satisfy: 
		\begin{enumerate}
			\item[a)] If $\nu_n \to \infty$, then 
			\[ \int_0^1 \tilde J \ dx_3 = \frac{1}{2} DQ_{\cell}^{\rel} \big(\hat{G}_1^{\sym} Z \big) \]
			and
			\[ \int_0^1 \Big(x_3 - \frac{1}{2}\Big) \tilde J \ dx_3 = 
				\frac{1}{24} DQ_{\cell}^{\rel} \big( \hat{G}_2 Z \big), \]
            almost everywhere on $(0,T') \times \R^2$, 
            \item[b)] if $\nu_n \equiv \nu \in \N$, then with $I_1 = (0, \frac{1}{\nu-1})$ and $I_2 = (\frac{\nu-2}{\nu-1},1)$ 
			\begin{align*}
				\int_0^1 J \ dx_3 
				&= \frac{1}{2} DQ_{\cell}^{\rel} \Big(\hat{G}_1^{\sym} Z + \frac{1}{2(\nu-1)} ( \hat{G}_2 Z_- +\partial_{12} v \, M) \Big), \\ 
				\sum_{j=1}^{2} \int_{I_j} J^{(j)} \ dx_3 
				&= \frac{1}{\nu - 1}  DQ_{\surf} \Big( \hat{G}_1^{\sym} Z^{(1)} + \frac{1}{2(\nu-1)} \partial_{12} v \, M^{(1)} \Big) 
            \end{align*}
			and 
			\begin{align*}
			 \int_0^1 \Big(x_3 - \frac{1}{2}\Big) J \ dx_3 
			 &= \frac{\nu(\nu-2)}{24(\nu-1)^2} DQ_{\cell}^{\rel} \big( \hat{G}_2 Z \big) , \\ 
			 \sum_{j=1}^{2} \int_{I_j} \Big( x_3 - \frac{1}{2} + \frac{(-1)^j}{2(\nu-1)} \Big) J^{(j)} \ dx_3
				&= \frac{1}{4(\nu-1)} DQ_{\surf} \big(  \hat{G}_2 Z^{(1)} \big)
			\end{align*}
        almost everywhere in  $(0,T') \times \R^2$. 
		\end{enumerate}
	\end{corollary}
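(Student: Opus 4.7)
The strategy is to express $J$ and each $J^{(j)}$ as images under $DQ_{\cell}^{\rel}$, respectively $DQ_{\surf}$, of explicit linear combinations of $\hat G_1^{\sym} Z$, $\hat G_2 Z$, $\hat G_2 Z_-$ and $\partial_{12} v\, M$, and then integrate (or sum) in $x_3$.

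First I would upgrade the orthogonality of $\tilde J$ from Proposition~\ref{prop: orth-tilde-J} to the statement that $DQ_{\cell}(\bar G) \perp (\R^3 \otimes e_3) Z$ almost everywhere. In the bulk this is immediate since $\tilde J = J = \tfrac{1}{2} DQ_{\cell}(\bar G)$; in the top and bottom layers of case~(b) the column-sum identities implied by $\tilde J \perp (\R^3 \otimes e_3) Z$ are precisely the hypotheses \eqref{tag1} and \eqref{tag2} of Lemma~\ref{lemma: surf-to-bulk-orth}, whose conclusion gives the orthogonality there too. Writing $\bar G = FZ + A$ with $F = G$, $A = 0$ (case~a), respectively $F = PG$, $A = \tfrac{1}{2(\nu-1)}(\hat G_2 Z_- + \partial_{12} v\, M)$ (case~b), Lemma~\ref{lem: derivative-relaxed} then yields $2J = DQ_{\cell}^{\rel}(\hat F'' Z + A)$; linearity of $DQ_{\cell}^{\rel}$ together with the vanishing $DQ_{\cell}^{\rel}(BZ) = 0$ for skew $3\times 3$ matrices $B$ lets me replace $\hat G_1$ by $\hat G_1^{\sym}$ in $\hat F''$ ($\hat G_2$ is already symmetric since $G_2 = -\nabla'^2 v$).

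For the bulk moments the remaining calculation is elementary. Case~(a) follows from $\int_0^1 (x_3 - \tfrac{1}{2})\, dx_3 = 0$ and $\int_0^1 (x_3 - \tfrac{1}{2})^2\, dx_3 = \tfrac{1}{12}$. In case~(b), I would sum over the layers $I_k$ of thickness $\tfrac{1}{\nu - 1}$, using $\hat F''|_{I_k} = \hat G_1 + \alpha_k \hat G_2$ with $\alpha_k = \tfrac{2k - \nu}{2(\nu - 1)}$; the algebraic identities $\sum_{k=1}^{\nu - 1} \alpha_k = 0$ and $\sum_{k=1}^{\nu - 1} \alpha_k^2 = \tfrac{\nu(\nu - 2)}{12(\nu - 1)}$ then produce exactly the coefficients $1$ and $\tfrac{\nu(\nu-2)}{24(\nu-1)^2}$ appearing in the statement.

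For the surface identities in case~(b) I would compute $\bar G^{(j)}|_{I_j}$ explicitly and use $Z^{(2)} = Z^{(1)} + (e_3,e_3,e_3,e_3)$, $M^{(1)} = M^{(2)}$, and the translation invariance $DQ_{\surf}(\cdot + (c,c,c,c)) = DQ_{\surf}(\cdot)$ (from the $(c,c,c,c)$-invariance of $W_{\surf}$) to re-express everything in terms of $Z^{(1)}$ modulo translations. Then $\sum_j \int_{I_j} J^{(j)}\, dx_3 = \tfrac{1}{2(\nu - 1)} DQ_{\surf}(\bar G^{(1)}|_{I_1} + \bar G^{(2)}|_{I_2})$ by linearity, and the $\mp \tfrac{1}{2} \hat G_2 Z^{(1)}$ contributions from the two layers cancel in the zeroth-moment sum and combine to give $\hat G_2 Z^{(1)}$ in the first-moment (weighted) difference. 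The main obstacle I anticipate is the $R_j$ bookkeeping: writing $PG|_{I_j} = \widehat{(PG|_{I_j})''} + R_j$ where $R_j$ carries the third row and column of $PG|_{I_j}$, one must show $DQ_{\surf}(R_j Z^{(1)}) = 0$, which is not obvious from the abstract hypotheses on $W_{\surf}$ alone. The key observation is that $R_j Z^{(1)}$ decomposes as a $(c,c,c,c)$-translation plus a linear combination of $e_3 \otimes Z^{(1)}_{1 \cdot}$ and $e_3 \otimes Z^{(1)}_{2 \cdot}$; these last two matrices arise as the non-translation part of $BZ^{(1)}$ for the skew generators $B = e_3 e_1^T - e_1 e_3^T$ and $B = e_2 e_3^T - e_3 e_2^T$ of infinitesimal rotations about $e_2$ and $e_1$, so by frame indifference $DQ_{\surf}(BZ^{(1)}) = 0$, and hence $DQ_{\surf}(R_j Z^{(1)}) = 0$ as required.
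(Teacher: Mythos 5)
Your proposal is correct and follows essentially the same route as the paper's (very terse) proof: combining Proposition~\ref{prop: orth-tilde-J} with Lemma~\ref{lemma: surf-to-bulk-orth} to get the pointwise orthogonality, invoking Lemma~\ref{lem: derivative-relaxed} and \eqref{eq: derivative-skew} (plus translation invariance of $Q_{\surf}$) to rewrite $J$ and $J^{(j)}$ via $DQ_{\cell}^{\rel}$ and $DQ_{\surf}$, and then carrying out the elementary $x_3$-integrations with $\alpha_k = \frac{2k-\nu}{2(\nu-1)}$. Your explicit treatment of the third row/column of $PG$ in the surface terms (the $R_jZ^{(1)}$ issue) is exactly the point the paper leaves implicit, and your resolution via frame indifference and translation invariance is the intended one.
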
 

	\begin{proof}
        a) This follows from \eqref{conv-Jn}, \eqref{conv-Jnsurf}, Proposition~\ref{prop: strain-td}, \eqref{eq:G1G2}, Lemma~\ref{lem: derivative-relaxed},  Lemma~\ref{lemma: surf-to-bulk-orth}, Proposition~\ref{prop: orth-tilde-J}, and \eqref{eq: derivative-skew}. 
        Note that in the first integral the terms depending on $x_3$ integrate to $0$, while in the second integral the other terms vanish. 
        \smallskip 
        
        b) The same reasoning leads to the first two equations in (b) as the $x_3$ dependent terms integrate to $0$. For the second equation we also use that $\hat{G}_2 Z_-^{(1)} = - \hat{G}_2 Z_-^{(2)}$, so that these terms cancel in $D Q_{\surf}$.  
        
        For the third identity in (b) we see again that the terms in $J$ without explicit $x_3$ dependence integrate to $0$. Since $J$ is piecewise constant in $x_3$, we are thus left with 
        \begin{align*}
         \int_0^1 \Big(x_3 - \frac{1}{2}\Big) J \ dx_3
         &= \sum_{m=1}^{\nu-1} \frac{(2m-\nu)^2}{2(\nu-1)^2} \cdot \frac12 DQ_{\cell}^{\rel} ( \hat{G}_2 Z ) 
         = \frac{\nu(\nu-2)}{24(\nu-1)^2} DQ_{\cell}^{\rel} ( \hat{G}_2 Z ).
        \end{align*} 
        For the last identity we first obtain 
        \begin{align*}
            \MoveEqLeft
            \sum_{j=1}^{2} \int_{I_j} \Big( x_3 - \frac{1}{2} + \frac{(-1)^j}{2(\nu-1)} \Big) J^{(j)} \ dx_3\\
			&= \frac{1}{\nu-1} \sum_{j=1}^2 (-1)^j\frac{1}{2} \cdot \frac{1}{2} DQ_{\surf} \Big( \hat{G}_1^{\sym} Z^{(j)} + (-1)^j\frac{\nu-2}{2(\nu-1)} \hat{G}_2 Z^{(j)} \\ 
			& \qquad \qquad \qquad \qquad \qquad \qquad \qquad \qquad + \frac{1}{2(\nu-1)} ( \hat{G}_2 Z_-^{(j)} +\partial_{12} v M^{(j)} ) \Big) \\ 
			&= \sum_{j=1}^2 \frac{(-1)^j}{4(\nu-1)} DQ_{\surf} \Big( (-1)^j\frac{\nu-2}{2(\nu-1)} \hat{G}_2 Z^{(1)} 
			+ (-1)^j \frac{1}{2(\nu-1)} \hat{G}_2 Z^{(1)} \Big), 
        \end{align*}
        from which the claim follows. 
   
	\end{proof}

	We have the following symmetry properties.
	\begin{lemma}\label{lem:symmetric-properties-time-dependent}
		The scaled stresses $J^n$, $J^{(1,n)}$ and $J^{(2,n)}$ satisfy
		\begin{align}
			\| J^n Z^T - Z (J^n)^T \|_{L^\infty(0,T';L^1(\Ox;\R^{3 \times 3}))} 
			&\leq C(T')h_n^2, \\
			\| J^{(j,n)}(Z^{(j)})^T - Z^{(j)} (J^{(j,n)})^T)\|_{L^\infty(0,T';L^1(\Ox;\R^{3 \times 3}))} 
			&\leq C(T')h_n^2,\quad j=1,2.
		\end{align}
	\end{lemma}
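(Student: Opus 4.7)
The plan hinges on the algebraic identity \eqref{eq:W-sym-prop}, which as a consequence of frame indifference states that $DW_{\cell}(A)A^T$ (and analogously $DW_{\surf}(A^{(1)})(A^{(1)})^T$) is a symmetric $3\times 3$ matrix for every admissible $A$. My approach is to substitute $A = A_n := Z + h_n^2 \bar{G}_n$ into this identity, expand $A_n^T = Z^T + h_n^2 \bar{G}_n^T$, and collect terms to isolate the commutator $DW_{\cell}(A_n)Z^T - Z(DW_{\cell}(A_n))^T$ on one side with an $h_n^2$-prefactor on the other. Dividing by $h_n^2$ then yields the pointwise a.e.\ identity
\begin{align*}
    J^n Z^T - Z (J^n)^T = h_n^2 \bigl( \bar{G}_n (J^n)^T - J^n \bar{G}_n^T \bigr),
\end{align*}
which exhibits the symmetry defect of $J^n Z^T$ as an explicit $h_n^2$-small bilinear expression in $J^n$ and $\bar{G}_n$.

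The desired $L^1$ bound will then drop out of Cauchy--Schwarz,
\begin{align*}
    \| J^n Z^T - Z(J^n)^T \|_{L^1(\Ox)} \le 2 h_n^2 \|J^n\|_{L^2(\Ox)} \|\bar{G}_n\|_{L^2(\Ox)},
\end{align*}
using that $\bar{G}_n$ is bounded in $L^\infty_{\mathrm{loc}}(I_T;L^2(\Ox;\R^{3\times 8}))$ by Lemma~\ref{lem: strain-td-bound} and that the $L^\infty_{\mathrm{loc}}(I_T;L^2)$-bound on $J^n$ is the one already recorded at the beginning of this subsection.

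For the surface stresses I will run exactly the same argument with $W_{\surf}$, $Z^{(1)}$ and $\bar{G}_n^{(j)}$ in place of $W_{\cell}$, $Z$ and $\bar{G}_n$, which directly delivers the analogous identity with $Z^{(1)}$ on the left. To match the statement as phrased (with $Z^{(j)}$ rather than $Z^{(1)}$ when $j=2$), I will invoke the translation invariance $W_{\surf}(A^{(1)} + (c,\ldots,c)) = W_{\surf}(A^{(1)})$, whose differentiation gives $\sum_{l=1}^{4} (DW_{\surf}(A^{(1)}))_{\cdot l} = 0$. Since $Z^{(2)} - Z^{(1)} = e_3 \otimes (1,1,1,1)$, both $DW_{\surf}(A^{(1)})(Z^{(2)}-Z^{(1)})^T$ and $(Z^{(2)}-Z^{(1)})(DW_{\surf}(A^{(1)}))^T$ vanish, so $Z^{(1)}$ may be freely replaced by $Z^{(2)}$ inside the commutator. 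There is really no serious obstacle: the lemma is an essentially algebraic corollary of frame indifference combined with the a priori $L^2$-bounds already in hand, and the only fiddly point is precisely this $Z^{(j)}$-versus-$Z^{(1)}$ bookkeeping for the surface contribution.
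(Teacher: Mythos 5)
Your proposal is correct and follows essentially the same route as the paper: apply the frame-indifference identity \eqref{eq:W-sym-prop} at $Z + h_n^2\bar{G}_n$, isolate the commutator with an $h_n^2$ prefactor, and conclude by Cauchy--Schwarz using the $L^\infty_{\mathrm{loc}}(I_T;L^2)$ bounds on $J^n$ and $\bar{G}_n$. Your handling of the $Z^{(2)}$-versus-$Z^{(1)}$ discrepancy via $\sum_{l=1}^4 DW_{\surf}(A^{(1)})_{\cdot l}=0$ is a correct and welcome detail that the paper's proof passes over with ``the other can be shown exactly the same way.''
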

	
	\begin{proof}
		We only show the first inequality. The other can be shown exactly the same way. 
		By \eqref{eq:W-sym-prop} $DW_{\cell}(F)F^T$ is symmetric for each $F \in \R^{3 \times 8}$, so 
		\begin{align*}
			0 
			&= DW_{\cell}(Z + h_n^2 \bar{G}_n) (Z + h_n^2 \bar{G}_n)^T - (Z + h_n^2 \bar{G}_n) DW_{\cell}(Z + h_n^2 \bar{G}_n)^T \\
			&= h_n^2 J^n Z^T + h_n^4 J^n \left(\bar{G}_n\right)^T - h_n^2 Z \left(J^n\right)^T - h_n^4 \bar{G}_n \left( J^n \right)^T. 
		\end{align*} 
		Hence, 
		\[ \big|J^n Z^T - Z \left(J^n\right)^T \big| 
		\leq h_n^2 \big|J^n \bar{G}_n^T - \bar{G}_n \left(J^n\right)^T\big|. \]
		Integrating and applying the Cauchy Schwarz inequality to the latter term yields the claim. 
	\end{proof}

\section{Proof of the main result}\label{section:Proofs}

\subsection{Outline of the proof}	
	In {\it Step 1} we will derive an energy bound which will be used to show that the solutions to \eqref{ode} exist up to time $T$. {\it Step 2} deals with the convergence of the displacements. In {\it Step 3} we will show that the equation \eqref{dyn_weakform2} holds true. {\it Step 5} is to show the weak continuity statements as well as that $v$ satisfies the stated initial conditions. {\it Step 4}, the derivation of equation \eqref{dyn_weakform1}, is the most complex task. Thus we split it into three parts. Let $\phi(t,x') = \mu(t) \chi(x')$ with $\mu \in C_c^\infty(0,T')$ and $\chi \in C_c^\infty(S)$. Set $\varphi(t,x) = (0,0,\phi(t,x'))$. In {\bf Part 1} the goal is to show
	\begin{align}
	      \MoveEqLeft
	    \int_0^{T'} \int_\Ox h_n^{-1} \tilde J^n \colon \bar{\nabla}_n \bar \varphi \ dx \ dt \nonumber \\
		&\underset{n \to \infty}{\longrightarrow }\int_0^{T'} \int_S \Big( \partial_t v \, \partial_t \phi 
		- \frac{1}{2}DQ_{\cell}^{\rel} ( \hat{G}_1^{\sym} Z) \colon (\nabla' v \otimes \nabla' \phi)\widehat{\phantom{i}} \, Z + g \phi \Big) \ dx' \ dt \label{timedep_ELE1_1}
	\end{align}
	for $\nu_n \to \infty$ and
	\begin{align}
		\MoveEqLeft
		\int_0^{T'} \int_\Ox h_n^{-1} \tilde J^n \colon \bar{\nabla}_n \bar \varphi \ dx \ dt \nonumber \\
		&\underset{n \to \infty}{\longrightarrow }
		\frac{\nu}{\nu - 1} \int_0^{T'} \int_S \partial_t v  \, \partial_t \phi \ dx' \ dt \nonumber \\
        &\qquad- \int_0^{T'} \int_S \frac{1}{2} DQ_{\cell}^{\rel} \left(\hat{G}_1^{\sym} Z + \frac{1}{2(\nu-1)} \hat{G}_2 Z_- +\frac{\partial_{12} v}{2(\nu-1)}  \, M\right) \colon 
			(\nabla' v \otimes \nabla' \phi)\widehat{\phantom{i}} \, Z 
			\ dx' \ dt \nonumber \\
        &\qquad- \int_0^{T'} \int_S \frac{1}{\nu - 1} DQ_{\surf} \Big( \hat{G}_1^{\sym} Z^{(1)} + \frac{\partial_{12} v}{2(\nu-1)}  \, M^{(1)} \Big) \colon 
			(\nabla' v \otimes \nabla' \phi)\widehat{\phantom{i}} \, Z^{(1)} 
			\ dx' \ dt \nonumber \\
		&\qquad+ \frac{\nu}{\nu-1}  \int_0^{T'} \int_S g \, \phi  \ dx' \ dt \label{timedep_ELE1_3}
	\end{align}
	for $\nu_n \equiv \nu \in \N$.
	
	In {\bf Part 2} we will show that for $\nu_n \to \infty$, where $\bar{x}$ is such that $\bar{Q}_n(x) = \bar{x} + (- \frac{\varepsilon_n}{2}, \frac{\varepsilon_n}{2} )^2 \times (-\frac{\varepsilon_n}{2 h_n}, \frac{\varepsilon_n}{2 h_n}) $ and $(\nabla' \phi)\dtilde{\phantom{\iota}}(t,\bar{x}) = \frac{1}{8}\sum_{j=1}^8\nabla' \phi(t,\bar{x}' + (z^j)')$, 
	\begin{align}
		\MoveEqLeft
		\sum_{l=1}^{8} \int_0^{T'} \int_\Ox h_n^{-1} (R_n \tilde J^n)_{\cdot l} \cdot  z_3^l \big((\nabla' \phi)\dtilde{\phantom{\iota}}(t,\bar{x})^T, 0 \big)^T \ dx \ dt \nonumber \\
		&\underset{n \to \infty}{\longrightarrow}
		\int_0^{T'} \int_S \frac{1}{24} DQ_{\cell}^{\rel} ( (\nabla'^2 v)\widehat{\phantom{i}} \,Z ) \colon (\nabla'^2 \phi) \widehat{\phantom{i}} \,Z \ dx' \ dt. \label{timedep_ELE1_5}
	\end{align}
	and for $\nu_n \equiv \nu \in \N$
	\begin{align}
		\MoveEqLeft
		\sum_{l=1}^{8} \int_0^{T'} \int_\Ox h_n^{-1} (R_n \tilde J^n)_{\cdot l} \cdot  z_3^l \big((\nabla' \phi)\dtilde{\phantom{\iota}}(t,\bar{x})^T, 0 \big)^T \ dx \ dt \nonumber \\
		&\underset{n \to \infty}{\longrightarrow}
		- \int_0^{T'} \int_S \frac{\nu (\nu - 2)}{24(\nu - 1)^2} DQ_{\cell}^{\rel} (\hat{G}_2 Z) \colon (\nabla'^2 \phi )\widehat{\phantom{i}} \, Z \ dx' \ dt \nonumber \\
		& - \int_0^{T'} \int_S \frac{1}{4(\nu - 1)} DQ_{\cell}^{\rel} \Big(\hat{G}_1^{\sym} Z + \frac{1}{2(\nu-1)} \hat{G}_2 Z_- +\frac{\partial_{12} v}{2(\nu-1)}  \, M\Big) \colon 
			(\nabla'^2 \phi)\widehat{\phantom{i}} \, Z_-
			\ dx' \ dt \nonumber \\
		&- \int_0^{T'} \int_S \frac{1}{4(\nu - 1)} DQ_{\surf} ( \hat{G}_2 Z^{(1)} ) \colon (\nabla'^2\phi)\widehat{\phantom{i}} Z^{(1)} \ dx' \ dt. \label{timedep_ELE1_5-nu}
	\end{align}
	
	In {\bf Part 3} we determine the convergence of
	\begin{equation}
		\int_0^{T'} \int_\Ox h_n^{-1} \tilde J^n \colon \bar{\nabla}_n \bar \varphi \ dx \ dt
		-\sum_{l=1}^{8} \int_0^{T'} \int_\Ox h_n^{-1} (R_n \tilde J^n)_{\cdot l} \cdot  z_3^l \big( (\nabla' \phi)\dtilde{\phantom{\iota}}(t,\bar{x})^T, 0 \big)^T \ dx \ dt.  \label{term1}
	\end{equation}
	For $\nu_n \to \infty$ we will prove that $\eqref{term1} \to 0$ which yields \eqref{dyn_weakform1} for $\nu_n \to \infty$. For $\nu_n \equiv \nu \in \N$ we have
	\begin{align}
		\eqref{term1} 
		&\to
		\int_0^{T'} \int_S \frac{1}{4(\nu - 1)} DQ_{\cell}^{\rel} \Big(\hat{G}_1^{\sym} Z + \frac{1}{2(\nu-1)} \hat{G}_2 Z_- +\frac{\partial_{12} v}{2(\nu-1)}  \, M\Big) \colon \partial_{12} \phi M \ dx' \ dt \nonumber \\
		&\quad + \int_0^{T'} \int_S \frac{1}{2(\nu - 1)^2} DQ_{\surf} \Big( \hat{G}_1^{\sym} Z^{(1)} + \frac{\partial_{12} v}{2(\nu -1)} M^{(1)}  \Big) \colon \partial_{12} \phi M^{(1)} \ dx' \ dt, \label{endpart3}
	\end{align}
	which implies \eqref{dyn_weakform1} for $\nu_n \equiv \nu$. 


\subsection{Proof of the main theorem}
	We will follow the structure given in the outline. At this point we remark that solutions $y_n$ of \eqref{ode} in particular satisfy \eqref{dyn_weak_sltn}.
	\begin{proof}[Proof of Theorem~\ref{thm: main-td}]
		
		\textit{Step 1: An energy bound and existence time of the solutions.} From the bound on the initial data we will show the inequalities
		\begin{align}
			\esssup_{t \in [0,T']} \frac{\varepsilon_n^3}{h_n} \sum_{x \in \tilde{\Lambra}_n^{\prime} } W \left(x, \bar{\nabla}_n y_n(t,x)\right) \leq C(T') h_n^4, \label{energy1} \\
			\esssup_{t \in [0,T']} \frac{\varepsilon_n^3}{h_n} \sum_{x \in \tilde{\Lambra}_n } \abs{\partial_t y_n(t,x)}^2 \leq C(T') h_n^2 \label{energy2}
		\end{align}
		for every $T' \in I_T$.
		We may assume for a moment that $T_n \leq T'$. Now we show the energy bound which will eventually yield that $T_n \geq T'$ must hold for every $n \in \N$.
		Since $\partial_t y_n(t,x)=0$ for $x \in \tilde{\Lambra}_n \setminus \tilde{\Lambda}_{n,S}$, we may insert $\varphi = \partial_t y_n$ in \eqref{dyn_weak_sltn} and deduce
		\[ \frac{d}{dt} \left( \frac{h_n^2}{2} \sum_{x \in \tilde{\Lambra}_n} \abs{\partial_t y_n(t,x)}^2 + \sum_{x \in \tilde{\Lambra}^\prime_n} W(x, \bar{\nabla}_n y_n (t, x)) \right) = h_n^3 \sum_{x \in \tilde{\Lambra}_n} g(t,x') \left(\partial_t y_n (t,x) \right)_3. \]
		Thus we get 
		\begin{align}
            \MoveEqLeft
			\frac{h_n^2}{2} \sum_{x \in \tilde{\Lambra}_n} \abs{\partial_t y_n(t,x)}^2 + \sum_{x \in \tilde{\Lambra}_n^\prime} W \left(x, \bar{\nabla}_n y_n(t,x) \right) \nonumber \\
			&= \frac{h_n^2}{2} \sum_{x \in \tilde{\Lambra}_n} \abs{\partial_t y_n(0,x)}^2 + \sum_{x \in \tilde{\Lambra}_n^\prime} W(x, \bar{\nabla}_n y_n(0,x)) \nonumber \\
			&\qquad + \int_0^{t} h_n^3 \sum_{x \in \tilde{\Lambra}_n} g(s,x') \left( \partial_t y_n(s,x) \right)_3 \ ds \label{energy_ineq}
		\end{align}
		for every $0 \leq t \leq T_n$.
		From \eqref{energy_ineq2} and \eqref{energy_ineq} we deduce using Young's inequality in the form $\abs{ab} \leq h \frac{a^2}{2} + \frac{b^2}{2h}$
		\begin{align}
            \MoveEqLeft
			\frac{h_n^2}{2} \frac{\varepsilon_n^3}{h_n} \sum_{x \in \tilde{\Lambra}_n } \abs{\partial_t y_n(t,x) }^2 + \frac{\varepsilon_n^3}{h_n} \sum_{x \in \tilde{\Lambra}_n^{\prime} } W\left(x,\bar{\nabla}_n y_n(t,x) \right) \nonumber \\
			&= Ch_n^4 + \frac{h_n^4}{2} \int_0^{t} \int_{\Ox_{\nu_n}} \abs{\bar g(s,x')}^2 \ dx \ ds
			+ \frac{h_n^2}{2} \int_0^{t} \frac{\varepsilon_n^3}{h_n} \sum_{x \in \tilde{\Lambra}_n} \abs{\partial_t y_n(s,x) }^2 \ ds \nonumber \\
			&\leq C h_n^4 + \frac{h_n^2}{2} \int_0^{t} \frac{\varepsilon_n^3}{h_n} \sum_{x \in \tilde{\Lambra}_n} \abs{\partial_t y_n(s,x) }^2 \ ds. \label{energyhelp}
		\end{align}
		Thus
		\[\frac{\varepsilon_n^3}{h_n} \sum_{x \in \tilde{\Lambra}_n} \abs{\partial_t y_n(t,x) }^2
		\leq C h_n^2 + \int_0^{t} \frac{\varepsilon_n^3}{h_n} \sum_{x \in \tilde{\Lambra}_n} \abs{\partial_t y_n(s,x) }^2 \ ds\]
		and applying Gronwall's inequality yields
		\[ \frac{\varepsilon_n}{h_n} \sum_{x \in \tilde{\Lambra}_n} \abs{\partial_t y_n(t,x)}^2 \leq C h_n^2 + C h_n^2 \exp(T_n) 
		\leq C h_n^2 (1 + \exp(T'))  = C(T') h_n^2. \]
		Integration leads to
		\[\int_0^{T_n} \frac{\varepsilon_n^3}{h_n} \sum_{x \in \tilde{\Lambra}_n} \abs{\partial_t y_n(t,x) }^2 \ dt \leq C(T') h_n^2. \] 
		Together with \eqref{energyhelp} this immediately implies the inequalities \eqref{energy1} and \eqref{energy2} up to time $T_n$.
		By \eqref{eq: interpol-bounds} we get the same energy bound for the piecewise affine interpolations, i.e., 
		\begin{equation}
			\esssup_{t \in [0,T_n]} \int_\Ox \abs{ \partial_t {\tilde{y}}_n(x) }^2 \ d x \leq C(T_n) h_n^2. \label{bound-dt-tildeyn}
		\end{equation}
		
		\bigskip
		\textit{}
		
		Evidently \eqref{energy2} rules out a finite time blow-up for $\partial_t y_n$, i.e. for every $n \in \N$ there is a constant $C(n)$ such that
		\begin{equation}
			\esssup_{t \in [0,T']} \norm{ \partial_t y_n(t, \cdot) }_{l^\infty(\tilde{\Lambra}_n)} \leq C(n). \label{bound-dtyn}
		\end{equation}
		It follows from \eqref{energy1} and the lower bound of $W$ that
		\begin{equation}
			\esssup_{t \in [0,T_n]} \norm{\bar{\nabla}_n y_n(t,\cdot)}_{l^\infty(\tilde{\Lambra}_n^\prime)} \leq C(n). \label{derivative-bound}
		\end{equation} 
		Now if $x \in \tilde{\Lambra}_n$ we can choose $x_0 \in \tilde{\Lambra}_n \setminus \tilde{\Lambda}_{n,S}$ such that $x = x_0 + l e_1$, with $l$ only depending on $n$. Then \eqref{derivative-bound} and $y(x_0) = (x_0', h (x_0)_3)$ imply  
		\[ y_n(x) 
		   = (x_0', h (x_0)_3) + \sum_{j=1}^l \big( \bar\partial_2^n y(x_j) - \bar\partial_1^n y(x_j) \big), \] 
        where $x_j = x_0 + (j-1) e_1 - (\varepsilon_n (z^1)^\prime, \frac{\varepsilon_n}{h_n} z_3^1)$. This proves 
		\begin{equation*}
			\esssup_{t \in [0,T_n]} \norm{y_n(t,\cdot)}_{l^\infty(\tilde{\Lambra}_n)} \leq C(n). 
		\end{equation*}
		Together with \eqref{bound-dtyn} this implies $T_n = T$.
		\bigskip

		\textit{Step 2: Convergence of the displacements and boundary conditions}
		The convergence of the displacements $u_n$ and $v_n$ as well as the boundary conditions \eqref{bc-td-limiting} are contained in Proposition~\ref{prop: td-displacements}.
		
		\bigskip
		\textit{Step 3: Derivation of equation \eqref{dyn_weakform2}.}
		Let $\Psi(t,x') = \eta(t) \chi(x')$ with $\eta \in C_c^\infty(0,T')$ and $\chi \in C_c^\infty(S;\R^2)$. With $\varphi(t,x) \coloneqq (\Psi(t,x'), 0)$ we have by \eqref{dyn_weak_sltn}
		\begin{align*}
			0 &= h_n^2 \int_0^{T'} \int_{ \Ox_{\nu_n} }  \partial_t \bar y_n(t, x) \cdot \partial_t \bar \varphi (t, x) \ d x \ dt \\
			&\qquad - \int_0^{T'} \int_\Ox D_A W \left(x, \bar{\nabla}_n \bar y_n (t, x)  \right) \colon \bar{\nabla}_n \bar \varphi (t, x) \ d x \ dt.
		\end{align*}
		Note that the force term does not appear here because of $\varphi_3 = 0$. After multiplication with $h_n^{-2}$ and decomposition of the discrete gradient we get
		\begin{align}
			0 =  \int_0^{T'} \int_{ \Ox_{\nu_n}}  \partial_t \bar y_n(t, x) \cdot \partial_t \bar \varphi (t, x) \ d x \ dt 
			- \int_0^{T'} \int_\Ox \tilde J^n \colon \bar{\nabla}_n \bar \varphi \ dx \ dt.  \label{td-bulk-and-surf}
		\end{align} 
		The first term on the right hand side of \eqref{td-bulk-and-surf} vanishes as $n \to \infty$ because of \eqref{energy2}. For the seond term we note that $\tilde J^n \overset{*}{\rightharpoonup} \tilde J$ in $L^\infty_{\mathrm{loc}}(I_T; L^2(\Ox; \R^{3 \times 8}))$ and $\bar{\nabla}_n \doublebar \varphi \to \nabla \varphi Z$ in $L^1_{\mathrm{loc}}(I_T;L^2(\Ox; \R^{3 \times 8}))$, hence we get
		\[ - \int_0^{T'} \int_\Ox \tilde J \colon \nabla \varphi Z \ d x \ dt = 0. \] 
		
		For $\nu_n \to \infty$ we obtain by part (a) of Corollary~\ref{cor:summary-tilde-J} 
		\begin{align*}
			0 = \int_0^{T'} \int_S \frac{1}{2} DQ_{\cell}^{\rel} ( \hat{G}_1^{\sym} \,Z ) \colon (\nabla' \Psi) \widehat{\phantom{i}} \,Z \ dx' \ dt.
		\end{align*}
		By density this holds for every $\Psi \in L^2(0,T';H_0^1(S;\R^2))$ and we have \eqref{dyn_weakform2} for $\nu_n \to \infty$.  
		
		For $\nu_n \equiv \nu \in \N$ by part (b) of Corollary~\ref{cor:summary-tilde-J}
		\begin{align*}
            0 
			&=\int_0^{T'} \int_S \frac{1}{2} DQ_{\cell}^{\rel} \left(\hat{G}_1^{\sym} Z + \frac{1}{2(\nu-1)} \hat{G}_2 Z_- +\frac{1}{2(\nu-1)} \partial_{12} v \, M\right) \colon 
			(\nabla' \Psi)\widehat{\phantom{i}} \, Z  \ dx' \ dt \\
			&\qquad + \frac{1}{\nu - 1}  \int_0^{T'} \int_S DQ_{\surf} \left( \hat{G}_1^{\sym} Z^{(1)} + \frac{1}{2(\nu-1)} \partial_{12} v \, M^{(1)} \right) \colon (\nabla' \Psi)\widehat{\phantom{i}} \, Z^{(1)} \ dx' \ dt  
		\end{align*}

		for every $\Psi \in L^2(0,T'; H_0^1(S))$, which is \eqref{dyn_weakform2} for $\nu_n \equiv \nu$.
		\bigskip
		
		\textit{Step 4: Derivation of equation \eqref{dyn_weakform1}.}

		{\bf Part 1:} Let $\phi(t,x') = \mu(t) \chi(x')$ with $\mu \in C_c^\infty(0,T')$ and $\chi \in C_c^\infty(S)$. Inserting $\varphi_1(t,x) = (0, 0, \phi(t,x'))$ into \eqref{eq:ODE-with-J}, multiplying by $h_n^{-1}$ and setting $A_n \coloneqq \frac{R_n - Id}{h_n}$ we obtain 
		Letting $A_n \coloneqq \frac{R_n - Id}{h_n}$ and multiplying with $h_n^{-1}$ leads to
		\begin{align}
			\MoveEqLeft
			\int_0^{T'} \int_\Ox h_n^{-1} \tilde J^n \colon \bar{\nabla}_n \bar \varphi_1 \ dx \ dt \nonumber \\
			&= \int_0^{T'} \int_{\Ox_{\nu_n}}  h_n^{-1} \partial_t \left(\bar y_n \right)_3  \partial_t \bar \phi \ dx \ dt \label{ELE1_td_1} \\
			&\qquad - \int_0^{T'} \int_\Ox A_n \tilde J^n \colon \bar{\nabla}_n \bar \varphi_1 \ dx \ dt \label{ELE1_td_2} \\
			&\qquad + \frac{\nu_n}{\nu_n - 1} \int_0^{T'} \int_{\Ox_{\nu_n}} \bar g \bar \phi \ dx' \ dt \label{ELE1_td_3}.
		\end{align}
		We determine the convergence of these terms separately. First look at \eqref{ELE1_td_3}: 
		Since $g \in L^2(I_T; W^{1,\infty}(S)) \cap C^0(I_T, L^\infty(S))$, we see that $\bar g \bar \phi \to g \phi$ in $L^2(0, T'; L^\infty(S))$ so that 
		\begin{align*}
			\eqref{ELE1_td_3} \underset{n \to \infty}{\longrightarrow}
			\begin{cases}
				\int_0^{T'} \int_S g \phi \ dx' \ dt & \text{if} \ \nu_n \to \infty, \\
				\frac{\nu}{\nu - 1} \int_0^{T'} \int_S g \phi \ dx' \ dt & \text{if} \ \nu_n \equiv \nu \in \N. 
			\end{cases}
		\end{align*} 
		For the term \eqref{ELE1_td_2} we note that $(\bar{\nabla}_n \varphi_1)_{ij} = 0$ whenever $i \in \{1,2\}$ and $(\bar{\nabla}_n \varphi_1)_{3j} = \bar\partial_j^n \phi$ for $j=1,\ldots, 8$ and therefore
		\begin{align*}
			A_n \tilde J^n \colon \bar{\nabla}_n \varphi_1(t,x) 
			&= \sum_{l=1}^{8} ( A_n \tilde J^n )_{3l} \bar\partial_l^n \phi(t,x') \\
			&= \sum_{l=1}^{8} \sum_{k=1}^{2} (A_n)_{3k} \tilde J^n_{kl} \bar\partial_l^n \phi(t,x') 
			+ \sum_{l=1}^{8} (A_n)_{33} \tilde J^n_{3l} \bar\partial_l^n \phi(t,x').
		\end{align*}
		By Proposition~\ref{prop: td-displacements-further} we have that $A_n e_i \to Ae_i$ for $A = e_3 \otimes \nabla' v - \nabla' v \otimes e_3 $ strongly in $L^q_{\mathrm{loc}}(I_T;L^p(\R^2, \R^3))$ for $i = 1,2$ and any $1 \leq p,q < \infty$ as well as
		\[ \sym \ A_n \to 0 \quad \text{strongly in} \  L^\infty_{\mathrm{loc}} (I_T;L^r(\R^2, \R^{3 \times 3})) \]
		for all $r < \infty$. In particular this implies that $(A_n)_{33} \to 0$ strongly in $L^\infty_{\mathrm{loc}}(I_T; L^r(\R^2))$
		and
		\begin{align*}
			(\ref{ELE1_td_2}) \underset{n \to \infty} {\longrightarrow} - \int_0^{T'} \int_\Ox A \tilde J \colon  \nabla \varphi_1 Z \ dx \ dt 
			= - \int_0^{T'} \int_\Ox \tilde J \colon(\nabla' v \otimes \nabla' \phi)\widehat{\phantom{i}} \, Z \ dx \ dt,
		\end{align*}
		where we have used that $A^T \nabla \varphi_1 = (\nabla' v \otimes \nabla' \phi)\widehat{\phantom{i}}$. 
		
		For the term \eqref{ELE1_td_1} we note that \eqref{vn_3d_strong} and Proposition~\ref{prop: equiv-convergences} imply that 
		\begin{align}\label{eq: vn-convergence} h_n^{-1}  (\bar y_n)_3 \to 
		\begin{cases}
			\hat{v} \ \text{ in }  L^\infty(0,T';L^2_{ \mathrm{loc}}(\Oxc)) & \text{if} \ \nu_n \to \infty, \\
			\hat{v}^\ast \text{ in } L^\infty(0,T';L^2_{ \mathrm{loc}}(\Oxc_{\nu}))  & \text{if} \ \nu_n \equiv \nu \in \N,
		\end{cases}
		\end{align}
		where $\hat{v}(t,x) = v(t,x') + \left(x_3-\frac{1}{2}\right)$ and $\hat v^\ast(t,x',x_3) = \hat v(t,x',\frac{i}{\nu - 1})$ if $x_3 \in (\frac{2i-1}{2(\nu - 1)}, \frac{2i+1}{2(\nu-1)} )$, $i = 1, \ldots, \nu - 1$. Moreover, by \eqref{energy2}, $h_n^{-1}  (\partial_t \bar y_n)_3$ is bounded in $L^\infty_{\mathrm{loc}}(I_T; L^2(\Ox))$, respectively, $L^\infty_{\mathrm{loc}}(I_T; L^2(\Ox_{\nu}))$, and therefore $h_n^{-1}  (\partial_t \bar y_n)_3 \overset{\ast}{\rightharpoonup} \partial_t \hat{v}$ or $h_n^{-1}  (\partial_t \bar y_n)_3 \overset{\ast}{\rightharpoonup} \partial_t \hat{v}^\ast$ in the respective spaces. Together with $\partial_t \bar \phi \to \partial_t \phi$ in $L^\infty_{\mathrm{loc}}(0,T;L^2(S))$ we deduce that
		\[ \eqref{ELE1_td_1} \to 
		\begin{cases}
			\int_0^{T'} \int_S \partial_t v \, \partial_t \phi \ dx' \ dt & \text{if} \ \nu_n \to \infty, \\
			\frac{\nu}{\nu - 1} \int_0^{T'} \int_S \partial_t v \, \partial_t \phi \ dx' \ dt & \text{if} \ \nu_n \equiv \nu \in \N
		\end{cases} \]
		as $n \to \infty$.
		
		Combining the convergences of \eqref{ELE1_td_1}, \eqref{ELE1_td_2} and \eqref{ELE1_td_3}, we obtain \eqref{timedep_ELE1_1} via  by part (a) of Corollary~\ref{cor:summary-tilde-J} in case $\nu_n \to \infty$ and \eqref{timedep_ELE1_3} by part (b) of Corollary~\ref{cor:summary-tilde-J} in case $\nu_n \equiv \nu \in \N$. 
		\bigskip
		
		{\bf Part 2:} Let $\varphi_2(t,x) = (x_3 - \frac{1}{2})\big(\eta(t,x')^T, 0\big)^T$ with $\eta(t,x')^T = \big(\partial_1 \phi(t,x'), \partial_2 \phi(t,x')\big)$. In \eqref{eq:ODE-with-J} we then get 
		\begin{align}
			0 = \int_0^{T'} \int_{\Ox_{\nu_n}} \partial_t \bar y_n \cdot \partial_t \bar \varphi_2 \ dx \ dt 
			- \int_0^{T'} \int_\Ox R_n \tilde J^n \colon \bar{\nabla}_n \bar \varphi_2 \ dx \ dt.\label{ELE1_td_4} 
		\end{align}
		We decompose the second term on the right hand side with the help of the product rule \eqref{eq: bar-product-rule} by writing  
		\begin{align}\label{eq: prodrule-for-phi2}
		\bar{\partial}_l^n \bar \varphi_2(t,\bar{x}) 
		  = h_n^{-1} z_3^l \begin{pmatrix} \dtilde{\eta}(t,\bar{x}') \\ 0 \end{pmatrix} + \Big( \bar{x}_3 + \frac{\varepsilon_n}{h_n}z_3^l - \frac{1}{2} \Big) \begin{pmatrix}\bar{\partial}_l^n \bar\eta(t, \bar{x}') \\ 0 \end{pmatrix},
		\end{align}                      
        where we denote by $\bar{x} \in \tilde{\Lambra}_n$ the center of a lattice cell containing $x$, i.e., $x \in \tilde{Q}_n(\bar{x})$, we have written $\dtilde{\eta}(t,\bar{x}') = \frac{1}{8}\sum_{j=1}^8\eta(t,\bar{x}' + (z^j)')$ and we have used that 
        \[ \sum_{j=1}^{8} h_n^{-1} z_3^j \, \bar\eta(t,\bar{x}' + \varepsilon_n (z^j)') 
        = \bigg(-\frac12 + \frac12 \bigg)\sum_{j=1}^{4} h_n^{-1} z_3^j \, \bar\eta(t,\bar{x}' + \varepsilon_n (z^j)') = 0. \]
        So 
        \begin{align}
            \MoveEqLeft
			\sum_{l=1}^{8} \int_0^{T'} \int_\Ox \big(R_n \tilde J^n\big)_{\cdot l} \cdot h_n^{-1} z_3^l \begin{pmatrix} \dtilde{\eta}(t,\bar{x}') \\ 0 \end {pmatrix} \ dx \ dt \label{eq: part-ii-term} \\
			&= \int_0^{T'} \int_{\Ox_{\nu_n}}  \partial_t \bar y_n  \partial_t \bar \varphi_2 \ dx \ dt \label{ELE1_td_16}\\
			&\qquad - \sum_{l=1}^{8} \int_0^{T'} \int_\Ox \big(R_n \tilde J^n\big)_{\cdot l} \cdot  \Big( \bar{x}_3 + \frac{\varepsilon_n}{h_n}z_3^l - \frac{1}{2} \Big) \begin{pmatrix} \bar{\partial}_l^n \bar\eta(t, \bar{x}') \\ 0 \end {pmatrix} \ dx \ dt.   \label{ELE1_td_11}
		\end{align} 

		The term in \eqref{ELE1_td_16} tends to $0$ by the energy inequality \eqref{energy2}. For the term in \eqref{ELE1_td_11} we use that $R_n \tilde J^n \overset{\ast}{\rightharpoonup} \tilde J$ in $L^\infty_{\mathrm{loc}}(0,T;L^2(\Ox;\R^{3 \times 8}))$ and that 
        \[ \Big( \bar{x}_3 + \frac{\varepsilon_n}{h_n}z_3^l - \frac{1}{2} \Big) \begin{pmatrix} \bar{\partial}_l^n \bar \eta(t, \bar{x}') \\ 0 \end {pmatrix} 
        \to  
        \Big(x_3-\frac{1}{2}\Big) \begin{pmatrix} \nabla' \eta(t,x) (z^l)' \\ 0 \end{pmatrix} \]
        uniformly if $\nu_n \to \infty$ and 
        \[ \Big( \bar{x}_3 + \frac{\varepsilon_n}{h_n}z_3^l - \frac{1}{2} \Big) \begin{pmatrix} \bar{\partial}_l^n \bar\eta(t, \bar{x}') \\ 0 \end {pmatrix} 
        \to  
        \Big(\frac{2i-1}{2(\nu-1)}+\frac{1}{\nu-1}z_3^l -\frac{1}{2}\Big) \begin{pmatrix} \nabla' \eta(t,x) (z^l)' \\ 0 \end{pmatrix} \]
        for $x_3 \in (\frac{i-1}{\nu-1},\frac{i}{\nu-1})$ uniformly in case $\nu_n \equiv \nu \in \N$.         
        If $\nu_n \to \infty$, we thus obtain 
        \begin{align*}
         \eqref{eq: part-ii-term}
         &\underset{n \to \infty}{\longrightarrow}
         - \int_0^{T'} \int_\Ox  \Big( x_3 - \frac{1}{2} \Big) \tilde{J} \colon (\nabla'^2 \phi )\widehat{\phantom{i}} \, Z \ dx \ dt \\ 
         &= - \int_0^{T'} \int_S \frac{1}{24} DQ_{\cell}^{\rel} \big( \hat{G}_2 Z \big) \colon (\nabla'^2 \phi )\widehat{\phantom{i}} \, Z \ dx' \ dt
        \end{align*}
        by part (a) of Corollary~\ref{cor:summary-tilde-J}, which implies \eqref{timedep_ELE1_5}. If  $\nu_n \equiv \nu \in \N$ since $\tilde{J}$ is piecewise constant in $x_3$ we get by part (b) of Corollary~\ref{cor:summary-tilde-J} 
        \begin{align*}
         \eqref{eq: part-ii-term}
         &\underset{n \to \infty}{\longrightarrow}
         - \int_0^{T'} \int_\Ox  \Big( x_3 - \frac{1}{2} \Big) \tilde{J} \colon (\nabla'^2 \phi )\widehat{\phantom{i}} \, Z + \frac{1}{2(\nu-1)} \tilde{J} \colon (\nabla'^2 \phi )\widehat{\phantom{i}} \, Z_- \ dx \ dt \\ 
         &= - \int_0^{T'} \int_\Ox  \Big( x_3 - \frac{1}{2} \Big) J \colon (\nabla'^2 \phi )\widehat{\phantom{i}} \, Z + \frac{1}{2(\nu-1)} J \colon (\nabla'^2 \phi )\widehat{\phantom{i}} \, Z_- \ dx \ dt \\ 
         &\qquad - \sum_{j=1}^{2} \int_0^{T'} \int_{\R^2} \int_{I_j} \Big( x_3 - \frac{1}{2} + \frac{(-1)^j}{2(\nu-1)} \Big) J^{(j)} \colon (\nabla'^2 \phi )\widehat{\phantom{i}} \, Z^{(1)} \ dx \ dt \\ 
         &= - \int_0^{T'} \int_S \frac{\nu(\nu-2)}{24(\nu-1)^2} DQ_{\cell}^{\rel} ( \hat{G}_2 Z ) \colon (\nabla'^2 \phi )\widehat{\phantom{i}} \, Z_- \\ 
         &\qquad\qquad\quad + \frac{1}{4(\nu-1)} DQ_{\cell}^{\rel} \Big(\hat{G}_1^{\sym} Z + \frac{1}{2(\nu-1)} ( \hat{G}_2 Z_- +\partial_{12} v \, M) \Big) \colon (\nabla'^2 \phi )\widehat{\phantom{i}} \, Z_- \\
         &\qquad\qquad\quad + \frac{1}{4(\nu-1)}DQ_{\surf} ( \hat{G}_2 Z^{(1)} ) \colon (\nabla'^2 \phi )\widehat{\phantom{i}} \, Z^{(1)} \ dx \ dt, 
        \end{align*}
        which is \eqref{timedep_ELE1_5-nu}. 

        {\bf Part 3:} Finally we determine the limit of \eqref{term1} for $\varphi(t,x) = \varphi_1(t,x) = (0,0,\phi(t,x'))$. We write  $h_n^{-1} R_n = A_n + h_n^{-1} \Id$ as in the previous step and recall that $A_n e_i \to A e_i$ for $i= 1,2$ in $L^q_\mathrm{loc}(I_T; L^p(\R^2))$ for all $ 1 \leq p,q < \infty$ by Proposition~\ref{prop: td-displacements-further} so that         
        \begin{align*}
         (A_n \tilde J^n)_{\cdot l} \cdot  z_3^l \begin{pmatrix} \nabla' \phi(t,\bar{x}') \\ 0 \end{pmatrix} \underset{n \to \infty}{\longrightarrow} \tilde J_{\cdot l} \cdot z_3^l A^T \begin{pmatrix} \nabla' \phi(t,x') \\ 0 \end{pmatrix} 
			= 0
        \end{align*} 
        in $L^1_\mathrm{loc}(I_T; L^1(\Ox))$ since $\tilde J \perp \left( \R^3 \otimes e_3 \right)Z$. We may thus replace $R_n$ by $\Id$ in \eqref{term1} and are led to consider the limiting behavior of 
        \begin{equation}
			\int_0^{T'} \int_\Ox h_n^{-1} \tilde J^n \colon \bar{\nabla}_n \bar \varphi_1 \ dx \ dt 
			-\sum_{l=1}^{8} \int_0^{T'} \int_\Ox h_n^{-1} \tilde J^n_{\cdot l} \cdot  z_3^l \matr{(\nabla' \phi)\dtilde{\phantom{\iota}}(t,\bar{x}) \\ 0} \ dx \ dt.  \label{ELE1_td_15}
		\end{equation}
		Taylor expanding the discrete derivative $\bar{\nabla}_n \bar \varphi_1 = (\bar{\partial}_l^n \bar \varphi_1)_l$ and the interpolated gradient $(\nabla' \phi)\dtilde{\phantom{\iota}}$ as 
		\begin{align*}
            \bar{\partial}_l^n \bar \varphi_1(t, \bar{x}) 
            &= \Big( \nabla' \phi(t, \bar{x}) (z^l)' + \frac{\varepsilon_n}{2} \nabla'^2 \phi (t, \bar{x})\big[(z^l)',(z^l)'\big] \\ 
            &\qquad - \frac{1}{8} \sum_{j=1}^8\frac{\varepsilon_n}{2} \nabla'^2 \phi (t, \bar{x})\big[(z^j)',(z^j)'\big] + O(\varepsilon_n^2) \Big) e_3 \\ 
        \shortintertext{and } 
            (\nabla' \phi)\dtilde{\phantom{\iota}}(t,\bar{x}) 
            &= \frac{1}{8}\sum_{j=1}^8\nabla' \phi(t,\bar{x}' + (z^j)') 
            = \nabla' \phi(t,\bar{x}) + O(\varepsilon_n), 
        \end{align*}
        and noting that 
 		\[ \nabla'^2 \phi(t, x')\big[ (z^i)', (z^i)' \big] = \frac{1}{4} \big(\partial_{11}\phi(t,x') + (-1)^{i+1} \cdot 2\, \partial_{12}\phi(t,x') + \partial_{22}\phi(t,x') \big), \]
        we get
		\begin{align}
			\eqref{ELE1_td_15} 
			&= \sum_{l=1}^{8} \sum_{i=1}^{2} \int_{0}^{T'} \int_\Ox h_n^{-1} \Big[\tilde J^n_{3l} \partial_i \phi(t,\bar{x}')z^l_i 
			- \tilde J^n_{il}\partial_i \phi(t,\bar{x}') z_3^l \Big] \ dx \ dt \label{conv3-p6} \\
			&\qquad + \sum_{l=1}^{8} \int_{0}^{T'} \int_\Ox \frac{1}{2(\nu_n - 1)} \tilde J_{\cdot l}^n \cdot \frac{1}{2} \partial_{12} \phi(t,\tilde{x}') (-1)^{l+1} e_3 \ dx \ dt + O(\varepsilon_n). \label{conv3-p4}
		\end{align}
        Here the first term converges to $0$ as $n \to \infty$ by Lemma~\ref{lem:symmetric-properties-time-dependent}. 
       In case $\nu_n \to \infty$ the term \eqref{conv3-p4} clearly vanishes. If $\nu_n \equiv \nu \in \N$ however by part (b) of Corollary~\ref{cor:summary-tilde-J} we see that
		\begin{align*}
			\eqref{conv3-p4} 
			&\underset{n \to \infty}{\longrightarrow}
			\int_{0}^{T'} \int_S \frac{1}{4(\nu - 1)} DQ_{\cell}^{\rel} \Big(\hat{G}_1^{\sym} Z + \frac{1}{2(\nu-1)} ( \hat{G}_2 Z_- +\partial_{12} v \, M) \Big) \colon \partial_{12} \phi M \ dx' \ dt \\
			&\qquad + \int_{0}^{T'} \int_S \frac{1}{2(\nu - 1)^2} DQ_{\surf} \Big( \hat{G}_1^{\sym} Z^{(1)} + \frac{1}{2(\nu-1)} \partial_{12} v \, M^{(1)} \Big) \colon \partial_{12} \phi M^{(1)} \ dx' \ dt.
		\end{align*}
		This concludes {\bf Part 3}.
		
		Summarizing we see that, for every $\phi$ of the form $\phi(t,x') = \eta(t) \chi(x')$ with $\eta \in C_c^\infty((0,T'))$, $\chi \in C_c^\infty(S)$, \eqref{dyn_weakform1} for $\nu_n \to \infty$ follows from \eqref{timedep_ELE1_1}, \eqref{timedep_ELE1_5} and from \eqref{term1} $\to 0$ if $\nu_n \to \infty$, while \eqref{dyn_weakform1} for $\nu_n \equiv \nu$ follows from \eqref{timedep_ELE1_3}, \eqref{timedep_ELE1_5-nu} and \eqref{endpart3} if $\nu_n \equiv \nu \in \N$.
		In both cases by density the equation holds true for every $\phi \in L^2(0,T;H_0^2(S)) \cap H_0^1(0,T;L^2(S)).$
		
		\bigskip
		\textit{Step 5: Weak continuity and the initial conditions.}
		
		From inequality \eqref{energy_ineq2} it follows that, up to a subsequence,
		\[ \frac{1}{h_n^2} \int_0^1 \big({\tilde{y}}_n^{(1)} (\cdot,  x_3)\big)_3 \ dx_3 \rightharpoonup y_3^{(1)} \quad \text{in} \ L^2( \R^2) \]
		for some $y_3^{(1)} \in L^2(S)$ supported on $S$. Note that, since $\big(y_n^{(0)}(x)\big)_3 = h_n(x_3 - \frac{1}{2})$ on $\tilde{\Lambra}_n \setminus \tilde{\Lambda}_{n,S}$, by \cite[Lemma 13]{LecumberryMueller_09} and \eqref{energy_ineq2}
		\[ \frac{1}{h_n} \int_0^1 \big({\tilde{y}}_n^{(0)} (\cdot, x_3)\big)_3 \ dx_3 \to y_3^{(0)} \quad \text{in} \ H^1( \R^2) \]
		for some $y_3^{(0)} \in H^1(S)$ supported on $S$. Since $v_n, v \in W^{1,\infty}(0,T';L^2(\R^2)) \hookrightarrow C([0,T'];L^2( \R^2))$ we get for almost every $x' \in  \R^2$
		\begin{align*}
			y_3^{(0)}(x') 
			&= \lim_{n \to \infty} h_n^{-1} \int_0^1 \big({\tilde{y}}_n^{(0)}(x', x_3) \big)_3\ dx_3 \\ 
			&= \lim_{n \to \infty} h_n^{-1} \int_0^1 \big({\tilde{y}}_n(0,x', x_3) \big)_3 \ dx_3 
			= v(0,x')
		\end{align*}
		which is \eqref{init_cond_limit_1}. 
		In order to derive the initial condition \eqref{init_cond_limit_2} we let $\phi \in C_c^\infty((0,T')\times S )$. The goal is to obtain an estimate
		\begin{equation}
			\bigg|\int_0^{T'} \int_{\R^2} \partial_t v^n \, \partial_t \phi \ dx' \ dt - \int_0^{T'} \int_{\R^2} \sum_{\alpha=1}^{2} \partial_t q^n_\alpha \partial_t \partial_\alpha \phi \ dx' \ dt\bigg| \leq C \norm{\phi}_{L^2(0,T';H^5(S))}. \label{H-5_bound}
		\end{equation}
        To this end, we consider the test functions
		\begin{align*}
			\varphi_1(t,x) = (0,0,\phi(t,x')), 
			\qquad 
			\varphi_2(t,x) = \Big(x_3 - \frac{1}{2} \Big) \matr{\nabla' \phi(t,x') \\ 0}.
		\end{align*}
		Set 
		\begin{align*}
		v^n(t,x') 
		&= h_n^{-1} \int_{-\frac{1}{2(\nu_n - 1)}}^{\frac{2\nu_n - 1}{2(\nu_n - 1)}} (\bar y_n)_3 (t,x',x_3) \ d x_3 \\
		\shortintertext{and} 
		q^n(t,x') 
		&= \int_{-\frac{1}{2(\nu_n - 1)}}^{\frac{2\nu_n - 1}{2(\nu_n - 1)}} {\overline{\left(x_3 - \frac{1}{2}\right)}} \bar y_n^\prime(t,x) \ dx_3. 
		\end{align*}
		Computing the difference of the weak form of the equations of motions \eqref{eq:ODE-with-J} for $y_n$ with $h_n^{-1} \varphi_1$ and with $\varphi_2$ we get, writing again $R_n = Id + h_n A_n$, 
		\begin{align}
		\MoveEqLeft
			\int_0^{T'} \int_{\R^2} \partial_t  v^n \, \partial_t \bar \phi \ dx' \ dt 
			- \int_0^{T'} \int_{\R^2} \sum_{\alpha=1}^{2} \partial_t q^n_\alpha \, \partial_t \overline{\partial_\alpha \phi} \ dx' \ dt \label{phi-with-double-bars} \\
			&= \int_0^{T'} \int_\Ox A_n \tilde J^n \colon \bar{\nabla}_n \bar \varphi_1 \ dx \ dt 
			-\frac{\nu_n}{\nu_n - 1} \int_0^{T'} \int_{\R^2} \bar g \, \bar \phi \ dx' \ dt \label{rather-easy-terms} \\
			&\qquad + \int_0^{T'} \int_\Ox h_n^{-1} \tilde J^n \colon \bar{\nabla}_n \bar \varphi_1 \ dx \ dt 
			- \int_0^{T'} \int_\Ox R_n \tilde J^n \colon \bar{\nabla}_n \bar{\varphi}_2 \ dx \ dt. \label{terms-to-be-bounded}
		\end{align}
		Instead of \eqref{H-5_bound} we will first derive a corresponding estimate for \eqref{phi-with-double-bars}. 
        First we look at the first term in \eqref{rather-easy-terms} which contains the mapping $A_n$. We have
		\begin{align}
			\bigg|\int_0^{T'} \int_\Ox A_n \tilde J^n \colon \bar{\nabla}_n \bar \varphi_1 \ dx \ dt\bigg| 
			&\leq C \|A_n \tilde J^n\|_{L^2(0,T';L^1(\Ox))} \|\nabla' \phi\|_{L^2(0,T';L^\infty(\R^2))} \nonumber \\
			&\leq C \|\phi\|_{L^2(0,T'; H^5({\R^2))}} \label{initvalues-t1}
		\end{align}
		by Sobolev embeddding, Proposition~\ref{prop: td-displacements-further} and \eqref{eq: cor: scaled-strain-td-2}. Also the force term in \eqref{rather-easy-terms} is clearly bounded by 
		\begin{equation}
			\biggl \vert \int_0^{T'} \int_{\R^2} \bar g \bar \phi \ dx' \ dt \biggr \vert
			\leq C \norm{\phi}_{L^2(0,T';H^5(S))} \label{initvalues-t2}
		\end{equation}
		due to $g \in L^2(I_T; W^{1,\infty}(\R^2)) \cap C^0(I_T, L^\infty(\R^2))$. For the last term in \eqref{terms-to-be-bounded} we use \eqref{eq: prodrule-for-phi2} to expand the discrete gradient of $\bar\varphi_2$. Since 
		\begin{align*}
		\MoveEqLeft
		\Bigg| \sum_{l=1}^{8} \int_0^{T'} \int_\Ox \big(R_n \tilde J^n\big)_{\cdot l} \cdot  \Big( \bar{x}_3 + \frac{\varepsilon_n}{h_n}z_3^l - \frac{1}{2} \Big) \matr{ \bar{\partial}_l^n \bar\eta(t, \bar{x}') \\ 0 } \ dx \ dt \Bigg| \\ 
		&\leq C \|\tilde J^n\|_{L^2(0,T';L^1(\Ox))} \|\nabla'^2 \phi\|_{L^2(0,T';L^\infty(\R^2))} 
		\leq C \norm{\phi}_{L^2(0,T'; H^5(\R^2))}, 
		\end{align*}
		where $\eta_i(t,x') = \partial_i \phi(t,x')$, $i = 1,2$, it remains to bound
		\begin{equation}
			\biggl \vert \int_0^{T'} \int_\Ox h_n^{-1} \tilde J^n \colon \bar{\nabla}_n \doublebar \varphi_1 \ dx \ dt 
			- \sum_{l=1}^{8} \int_0^{T'} \int_\Ox \big(R_n \tilde J^n\big)_{\cdot l} \cdot h_n^{-1} z_3^l \matr{ \dtilde{\eta}(t,\bar{x}') \\ 0 } \ dx \ dt \biggr \vert.  \label{td_init_cond_7}
		\end{equation}
		This is exactly the term in \eqref{ELE1_td_15} above. There, in order to obtain \eqref{conv3-p6} and \eqref{conv3-p4}, we showed that 
		\begin{align}
            \MoveEqLeft
            h_n^{-1} \tilde J^n \colon \bar{\nabla}_n \bar \varphi_1 
            - \sum_{l=1}^{8} \big(R_n \tilde J^n\big)_{\cdot l} \cdot h_n^{-1} z_3^l \matr{ \dtilde{\eta}(t,\bar{x}' \\ 0)} \nonumber \\ 
			&= \sum_{i=1}^{2} \bigg( \sum_{l=1}^{8} h_n^{-1} \big[\tilde J^n_{3l} z^l_i 
			- \tilde J^n_{il} z_3^l \big] \bigg) \partial_i \phi(t,\bar{x}') \label{Jphi-commu-est} \ \\
			&\qquad + \sum_{l=1}^{8} \int_{0}^{T'} \int_\Ox \frac{1}{2(\nu_n - 1)} \tilde J_{\cdot l}^n \cdot \frac{1}{2} \partial_{12} \phi(t,\tilde{x}') (-1)^{l+1} e_3 \ dx \ dt + O(\varepsilon_n) \label{JD2phiOeps}
		\end{align}
		with an error term $O(\varepsilon_n)$ that is uniformly bounded by $C \eps_n \| \phi \|_{L^\infty(0,T';W^{3,\infty}(\R^2))}$. $\tilde J^n$ is bounded in $L^\infty(0,T'; L^2(\Ox))$, we find that 		
		\[ 
		 |\eqref{JD2phiOeps}|
		 \le C \| \phi \|_{L^\infty(0,T';H^5(\R^2))} 
		\]
        Finally, the term in \eqref{Jphi-commu-est} is bounded with the help of Lemma~\ref{lem:symmetric-properties-time-dependent} by
        \[ 
		 |\eqref{Jphi-commu-est}|
		 \le h_n \| \phi \|_{L^\infty(0,T';H^5(\R^2))}. 
		\]
		Together with \eqref{initvalues-t1} and \eqref{initvalues-t2} this yields the desired bound for \eqref{phi-with-double-bars}: 
		\begin{align}
		 \bigg|\int_0^{T'} \int_S \partial_t  v^n \, \partial_t \bar \phi \ dx' \ dt 
			- \int_0^{T'} \int_S \sum_{\alpha=1}^{2} \partial_t q^n_\alpha \, \partial_t \overline{\partial_\alpha \phi} \ dx' \ dt \bigg| 
		 \le C \| \phi \|_{L^\infty(0,T';H^5(\R^2))}. \label{H-5_bound-bars}
		\end{align}
        In order to prove \eqref{H-5_bound}, we apply \eqref{H-5_bound-bars} to the function 
        \[ \hat\phi = \varepsilon_n^{-2} \chi_{(-\frac{\varepsilon}{2},\frac{\varepsilon}{2})^2} \ast \phi \in C_c^\infty((0,T')\times S ) \]
        and note that, since $v^n$ and $q_\alpha^n$ are constant on each $x + (-\frac{\varepsilon}{2},\frac{\varepsilon}{2})^2$, $x \in \varepsilon \Z^2$, 
        \[\int_{\R^2} \partial_t  v^n \, \partial_t \phi \ dx' - \int_{\R^2} \sum_{\alpha=1}^{2} \partial_t q^n_\alpha \, \partial_t \partial_\alpha \phi \ dx' 
        = \int_{ \R^2} \partial_t  v^n \, \partial_t \bar{\hat{\phi}} \ dx' - \int_{\R^2} \sum_{\alpha=1}^{2} \partial_t q^n_\alpha \, \partial_t \overline{\partial_\alpha \hat{\phi}} \ dx'
        \] 
        and that 
        \[ 
         \| \hat\phi \|_{L^\infty(0,T';H^5(\R^2))} 
         \le \| \phi \|_{L^\infty(0,T';H^5(\R^2))}
        \]
        by Young's inequality. 

        To prove \eqref{init_cond_limit_2}, we first note that both sides vanish for a.e.\ $x' \in \R^2 \setminus S$. Now \eqref{H-5_bound} implies that the sequence $\partial_t^2 v^n + \sum_{\alpha=1}^{2} \partial_t^2 \partial_\alpha q^n_\alpha$ in bounded in $L^2(0,T'; H^{-5}(S))$. The partial derivatives $\partial_\alpha q_\alpha^n$, $\alpha= 1,2$, are understood as distributional derivatives here.	Since for the distributional derivative $\partial^\alpha f$ of an $L^2(S)$-function $f$ we always have the estimate $\norm{\partial_\alpha f}_{H^{-1}(S)} \leq \norm{f}_{L^2(S)}$ from \eqref{energy2} it follows that
		\[ \partial_t \partial_\alpha q^n_\alpha \to 0 \quad \text{in} \ L^\infty(0,T'; H^{-1}(S)). \]
		By Lemma~\ref{lemma: weak-*-interpolations} and  \eqref{eq: vn-convergence} it holds that
		\[  \partial_t  v^n \overset{*}{\rightharpoonup} 
		\begin{cases}
			\partial_t v & \text{if} \ \nu_n \to \infty, \\
			\frac{\nu}{\nu - 1} \partial_t v & \text{if} \ \nu_n \equiv \nu \in \N 
		\end{cases}
		\]
		in $L^\infty(0, T'; L^2(S))$ and therefore also in $L^\infty(0,T'; H^{-1}(S))$, hence we get
		\[ \partial_t v^n + \sum_{\alpha=1}^{2} \partial_t \partial_\alpha q^n_\alpha \overset{*}{\rightharpoonup} 
		\begin{cases}
			\partial_t v & \text{if} \ \nu_n \to \infty, \\
			\frac{\nu}{\nu - 1} \partial_t v & \text{if} \ \nu_n \equiv \nu \in \N 
		\end{cases}\]
		in $L^\infty(0,T'; H^{-1}(S))$. Since the embedding
		\[ L^\infty(0,T'; H^{-1}(S)) \cap H^1(0,T';H^{-5}(S)) \hookrightarrow C([0,T'];H^{-5}(S)) \]
		is compact (c.f. \cite{Simon1986}) we deduce
		\[ \frac{\nu_n - 1}{\nu_n} \left(\partial_t v^n(0,\cdot ) + \sum_{\alpha=1}^{2} \partial_t \partial_\alpha q^n_\alpha(0, \cdot) \right)\to \partial_t v(0, \cdot) \]
		strongly in $H^{-5}(S)$. The prefactor $\frac{\nu_n - 1}{\nu_n}$ is to avoid case distinction. Let $\phi \in C_c^\infty(S)$, then
		\begin{align*}
			\int_S y_3^{(1)}(x') \phi(x') \ dx'
			&= \lim_{n \to \infty} \frac{\nu_n - 1}{\nu_n} \int_S h_n^{-1} \big(\bar y_n^{(2)}\big)_3 \phi (x') \ dx' \\
			&= \lim_{n \to \infty} \frac{\nu_n - 1}{\nu_n}  \int_S \partial_t  v^n(0,x') \phi(x') \ dx' \\
			&= \lim_{n \to \infty} \frac{\nu_n - 1}{\nu_n}  \int_S \left(\partial_t  v^n(0,x') + \sum_{\alpha=1}^{2} \partial_t \partial_\alpha q^n_\alpha(0,x') \right) \phi(x') \ dx' \\
			&\qquad-\lim_{n \to \infty} \frac{\nu_n - 1}{\nu_n}  \int_S \sum_{\alpha=1}^{2} \partial_t \partial_\alpha q^n_\alpha(0,x') \phi(x') \ dx' \\
			&= \int_S \partial_t v(0,x') \phi(x') \ dx',
		\end{align*}
		and \eqref{init_cond_limit_2} follows.
		
		Finally we show the weak continuity of $I_T \to L^2(S)$, $t \mapsto \partial_t v$. Let $(t_n)_n \subset I_T$ and $t \in I_T$ such that $t_n \to t$. Since $\partial_t v \in L^\infty_{\mathrm{loc}}(I_T; L^2(S))$ the sequence $(\partial_t v(t_n))_{n \in \N}$ is bounded in $L^2(S)$ and therefore converges weakly to some $f \in L^2(S)$. But since $\partial_t v \in C([0,T']; H^{-5}(S))$ we have $\partial_t v(t_n) \to \partial_t v(t)$ in $H^{-5}(S)$. Hence $f = \partial_t v(t)$ and weak continuity follows.
		
		Similarly we can show weak continuity of the map $I_T \to H^2(S)$, $t \mapsto v(t)$. For every $T' \in I_T$ we have $v \in W^{1,\infty}(0,T'; L^2(S)) \hookrightarrow C([0,T']; L^2(S))$ and $v \in L^\infty(0,T'; H^2(S))$. Thus the sequence $(v(t_n))_n$ is bounded in $H^2(S)$, i.e. $v(t_n) \rightharpoonup f$ for some $f \in H_0^2(S)$. Further $v(t_n) \to v(t)$ in $L^2(S)$, hence $v = f$.
	\end{proof}

\section{The stationary case}
	Theorem~\ref{thm: main-td} implies a similar convergence result for the stationary version of the von Kármán equations. For a direct approach in the static regime we refer to \cite{Buchberger:24}.
	
	\begin{definition}
		Let $\nu \in \{2,3,\ldots,\infty\}$. We say a pair $(u,v) \in H_0^1(S;\R^2) \times H_0^2(S)$ is a distributional solution to the von Kármán equations if
		\begin{align}
			\int_S \left( 
			- \mathcal{B}^\nu_{{\rm vK},1}(u, v, \phi) 
			+ g \, \phi \right) \ dx' = 0 \label{stat_weakform1}
		\end{align}
		for every $\phi \in C_c^\infty(S)$, and
		\begin{align}
			\int_S \mathcal{B}^\nu_{{\rm vK},2}(u, v, \Psi)  \ dx' = 0 \label{stat_weakform2} 
		\end{align}
		for every $\Psi \in C_c^\infty(S; \R^2)$.
	\end{definition}

	\begin{theorem}\label{thm: stationary}
		Let $g \in W^{1,\infty}(S)$ and $y_n:\tilde{\Lambda}_{n,S} \to \R^3$ be a sequence of stationary points of $E_n$, i.e.,
        \begin{equation}
			0 = \frac{\varepsilon^3_n}{h_n} \left[ \sum_{x \in \tilde{\Lambda}^{\prime}_{n,S} } D_A W \left(x, \bar{\nabla}_n y_n(x) \right) \colon \bar{\nabla}_n \varphi(x)
			+ h_n^3 \sum_{x \in \tilde{\Lambda}_{n,S}} g(x') \cdot \varphi(x) \right] \label{stat-point}
		\end{equation}
        for every $\varphi: \tilde{\Lambda}_{n,S} \to \R^3$. Moreover $y_n$  should satisfy the energy bound \[E_n (y_n) \leq C h_n^4 \] as well as the fully clamped boundary conditions. Then, up to a subsequence, it holds that
		\begin{align}
			u_n(x') &= h_n^{-2} \int_{0}^{1} \left( \tilde{y}_n^\prime (x',x_3) - x' \right) \ dx_3 \rightharpoonup u \text{ in } H^1(S), \label{conv_un} \\
			v_n(x') &= h_n^{-1} \int_{0}^{1} \left( \tilde{y}_n \right)_3 \ dx_3 \to v \text{ in } H^1(S), \ v \in H^2(S). \label{conv_vn} 
		\end{align}
		The limiting pair $(u,v)$ is a solution to \eqref{stat_weakform1} and \eqref{stat_weakform2}.
	\end{theorem}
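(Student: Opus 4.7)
The plan is to reduce Theorem~\ref{thm: stationary} to the already established time-dependent Theorem~\ref{thm: main-td} by viewing each stationary $y_n$ as a time-independent solution of \eqref{ode}. Concretely, I would extend $y_n$ by clamping to $\tilde{\Lambra}_n \setminus \tilde{\Lambda}_{n,S}$ and set $y_n(t,x) := y_n(x)$ together with initial data $y_n^{(0)} := y_n$ and $y_n^{(1)} := 0$. Then $\partial_t^2 y_n \equiv 0$, so the system \eqref{ode} collapses to precisely \eqref{stat-point}, which holds by hypothesis. The energy bound \eqref{energy_ineq2} is immediate from $y_n^{(1)} = 0$ and the assumption $E_n(y_n) \leq Ch_n^4$. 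Regarding the body force, $g \in W^{1,\infty}(S)$ trivially satisfies $g \in L^2(I_{T'}; W^{1,\infty}(S)) \cap C^0(I_{T'}; L^\infty(S))$ for any $T' > 0$ when regarded as time-independent.

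Applying Theorem~\ref{thm: main-td} on $I_{T'}$ produces, along a subsequence, limits $u \in L^\infty(I_{T'}; H^1(S;\R^2))$ and $v \in L^\infty(I_{T'}; H^2(S)) \cap W^{1,\infty}(I_{T'}; L^2(S))$ satisfying \eqref{dyn_weakform1}, \eqref{dyn_weakform2} with the boundary and initial conditions. Since the approximating sequences $u_n$, $v_n$ are all independent of $t$, the limits $u$ and $v$ inherit this property; in particular $\partial_t v \equiv 0$, so the inertial term in \eqref{dyn_weakform1} vanishes. Testing \eqref{dyn_weakform1} and \eqref{dyn_weakform2} with product functions $\phi(t,x') = \eta(t) \chi(x')$ and $\Psi(t,x') = \eta(t) \chi(x')$ for arbitrary $\eta \in C_c^\infty(0,T')$, and invoking the fundamental lemma of the calculus of variations in $t$, yields exactly \eqref{stat_weakform1} and \eqref{stat_weakform2}. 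The lateral boundary conditions $u \in H^1_0(S; \R^2)$ and $v \in H^2_0(S)$ come from \eqref{bc-td-limiting}.

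For the convergence statements \eqref{conv_un} and \eqref{conv_vn}: the weak-$\ast$ convergence $u_n \overset{*}{\rightharpoonup} u$ in $L^\infty(I_{T'}; H^1(S;\R^2))$ for time-independent sequences immediately gives $u_n \rightharpoonup u$ in $H^1(S;\R^2)$. For the out-of-plane displacements, Theorem~\ref{thm: main-td} delivers $v_n \to v$ strongly in $L^\infty(I_{T'}; L^2(S))$, hence strongly in $L^2(S)$, while $(v_n)$ is bounded in $H^2(S)$ uniformly in $n$ (as a consequence of the $H^2$-boundedness already visible through Proposition~\ref{prop: td-displacements}). Extracting a further subsequence with $v_n \rightharpoonup v$ in $H^2(S)$ and invoking the compact embedding $H^2(S) \hookrightarrow H^1(S)$ upgrades the $L^2$-convergence to strong $H^1$-convergence. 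Uniqueness of the limit then shows that the entire (sub)sequence converges in $H^1(S)$.

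The procedure is essentially bookkeeping, so the main (mild) subtlety is to verify that the time-dependent framework can be invoked cleanly for constant-in-time data and that the general test functions in \eqref{dyn_weakform1}, \eqref{dyn_weakform2} may be restricted to separated form $\eta(t)\chi(x')$ to recover the purely spatial variational identities. Both are handled by density and the fundamental lemma.
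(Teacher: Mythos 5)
Your reduction to the time-dependent result is exactly the paper's own strategy: regard $y_n$ as a constant-in-time solution of \eqref{ode} with $y_n^{(1)}=0$, note that the limits $u,v$ are then time-independent so the inertial term drops out, and recover \eqref{stat_weakform1}--\eqref{stat_weakform2} by testing with separated functions $\eta(t)\chi(x')$. The one step that does not hold as written is your justification of the strong $H^1$-convergence in \eqref{conv_vn}: you claim $(v_n)$ is bounded in $H^2(S)$ "as a consequence of the $H^2$-boundedness already visible through Proposition~\ref{prop: td-displacements}", but that proposition asserts $H^2$-regularity only of the \emph{limit} $v$, not of the approximants. Indeed $v_n$ is obtained by integrating the piecewise affine interpolation $\tilde y_n$ in $x_3$, so $\nabla' v_n$ is in general discontinuous across the interpolation simplices and $v_n$ need not lie in $H^2(S)$ at all; the compact-embedding argument therefore cannot start. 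The correct route (as in \cite{FrieseckeJamesMueller_06}) is to observe that $\partial_\alpha v_n = h_n^{-1}\int_0^1 \partial_\alpha (\tilde y_n)_3\,dx_3$ differs from $(A_n)_{3\alpha} = h_n^{-1}(R_n - \Id)_{3\alpha}$ by $O(h_n)$ in $L^2$ thanks to \eqref{y_n - r_n}, and $A_n e_\alpha$ is strongly compact in $L^p$ by Proposition~\ref{prop: td-displacements-further}; this gives $\nabla' v_n \to \nabla' v$ strongly in $L^2(S)$ and hence \eqref{conv_vn}. The remainder of your argument, including the verification of \eqref{energy_ineq2} and the passage from the time-dependent weak formulation to the stationary one, is sound and coincides with the paper's proof.
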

	
	\begin{proof}
		Equation \eqref{strong-solution} holds true time-independently. The mappings $y_n$ clearly satisfy the assumptions of Proposition~\ref{prop: td-displacements}. In particular the averaged scaled in- and out-of-plane displacements satisfy the convergences \eqref{weak-*-un}--\eqref{weak-*-dtvn} and fulfill the equations \eqref{dyn_weakform1} and \eqref{dyn_weakform2}. Equation \eqref{dyn_weakform1} holds true without the derivative term since $\partial_t v = 0$. Choosing test functions of the form $\phi(t,x) = \eta(t)\chi(x')$ with $\eta \in C_c^\infty(0,T')$, $\chi \in C_c^\infty(S)$, or $\Psi(t,x') = \eta(t) \tilde{\Psi}(x')$ with $\tilde{\Psi} \in C_c^\infty(S;\R^2)$, together with the fact that both, $u$ and $v$ are time-independent, implies \eqref{stat_weakform1} and \eqref{stat_weakform2}. The boundary conditions follow from Proposition~\ref{prop: td-displacements}.
	\end{proof}

\section*{Acknowledgments}
The second author gratefully acknowledges the support of the Deutsche Forschungsgemeinschaft (DFG, German Research Foundation) for Project 441138507.

\bibliographystyle{alpha} 
\bibliography{references}

\begin{thebibliography}{AMM11b}

\bibitem[AA22]{AbelsAmeismeier_22}
Helmut Abels and Tobias Ameismeier.
\newblock Convergence of thin vibrating rods to a linear beam equation.
\newblock {\em Z. Angew. Math. Phys.}, 73(4):Paper No. 166, 28, 2022.

\bibitem[AA23]{AbelsAmeismeier_23}
Helmut Abels and Tobias Ameismeier.
\newblock Large times existence for thin vibrating rods.
\newblock {\em Asymptot. Anal.}, 131(3-4):471--512, 2023.

\bibitem[ABC08]{AlicandroBraidesCicalese:08}
Roberto Alicandro, Andrea Braides, and Marco Cicalese.
\newblock Continuum limits of discrete thin films with superlinear growth
  densities.
\newblock {\em Calc. Var. Partial Differential Equations}, 33(3):267--297,
  2008.

\bibitem[ABP91]{AcerbiButtazzoPercivale:91}
Emilio Acerbi, Giuseppe Buttazzo, and Danilo Percivale.
\newblock A variational definition of the strain energy for an elastic string.
\newblock {\em J. Elasticity}, 25(2):137--148, 1991.

\bibitem[ABP94]{AnzellottiBaldoPercivale:94}
G.~Anzellotti, S.~Baldo, and D.~Percivale.
\newblock Dimension reduction in variational problems, asymptotic development
  in {$\Gamma$}-convergence and thin structures in elasticity.
\newblock {\em Asymptotic Anal.}, 9(1):61--100, 1994.

\bibitem[AC04]{AlicandroCicalese:04}
Roberto Alicandro and Marco Cicalese.
\newblock A general integral representation result for continuum limits of
  discrete energies with superlinear growth.
\newblock {\em SIAM J. Math. Anal.}, 36(1):1--37, 2004.

\bibitem[AMM11a]{AbelsMoraMueller_11}
Helmut Abels, Maria~Giovanna Mora, and Stefan M\"uller.
\newblock Large time existence for thin vibrating plates.
\newblock {\em Comm. Partial Differential Equations}, 36(12):2062--2102, 2011.

\bibitem[AMM11b]{AbelsMoraMueller_09}
Helmut Abels, Maria~Giovanna Mora, and Stefan M\"uller.
\newblock The time-dependent von {K}\'arm\'an plate equation as a limit of 3d
  nonlinear elasticity.
\newblock {\em Calc. Var. Partial Differential Equations}, 41(1-2):241--259,
  2011.

\bibitem[Ant95]{Antman:95}
Stuart~S. Antman.
\newblock {\em Nonlinear problems of elasticity}, volume 107 of {\em Applied
  Mathematical Sciences}.
\newblock Springer-Verlag, New York, 1995.

\bibitem[BS13]{BraunSchmidt:13}
Julian Braun and Bernd Schmidt.
\newblock On the passage from atomistic systems to nonlinear elasticity theory
  for general multi-body potentials with {$p$}-growth.
\newblock {\em Netw. Heterog. Media}, 8(4):879--912, 2013.

\bibitem[BS22]{BraunSchmidt2022}
Julian Braun and Bernd Schmidt.
\newblock An atomistic derivation of von-{K}\'arm\'an plate theory.
\newblock {\em Netw. Heterog. Media}, 17(4):613--644, 2022.

\bibitem[BSV07]{BraidesSolciVitali:07}
Andrea Braides, Margherita Solci, and Enrico Vitali.
\newblock A derivation of linear elastic energies from pair-interaction
  atomistic systems.
\newblock {\em Netw. Heterog. Media}, 2(3):551--567, 2007.

\bibitem[Buc24]{Buchberger:24}
David Buchberger.
\newblock {\em Convergence of equilibria of thin elastic plates in a discrete
  model -- The von Kármán case}.
\newblock PhD thesis, Universit{\"a}t Augsburg, 2024.

\bibitem[Cia97]{Ciarlet:97}
Philippe~G. Ciarlet.
\newblock {\em Mathematical elasticity. {V}ol. {II}: Theory of plates},
  volume~27 of {\em Studies in Mathematics and its Applications}.
\newblock North-Holland Publishing Co., Amsterdam, 1997.

\bibitem[Dav14]{Davoli:14}
Elisa Davoli.
\newblock Quasistatic evolution models for thin plates arising as low energy
  {$\Gamma$}-limits of finite plasticity.
\newblock {\em Math. Models Methods Appl. Sci.}, 24(10):2085--2153, 2014.

\bibitem[DM12]{DavoliMora_12}
Elisa Davoli and Maria~Giovanna Mora.
\newblock Convergence of equilibria of thin elastic rods under physical growth
  conditions for the energy density.
\newblock {\em Proc. Roy. Soc. Edinburgh Sect. A}, 142(3):501--524, 2012.

\bibitem[FJ00]{FrieseckeJames:00}
Gero Friesecke and Richard~D. James.
\newblock A scheme for the passage from atomic to continuum theory for thin
  films, nanotubes and nanorods.
\newblock volume~48, pages 1519--1540. 2000.
\newblock The J. R.\ Willis 60th anniversary volume.

\bibitem[FJM02]{FrieseckeJamesMueller_02}
Gero Friesecke, Richard~D. James, and Stefan M\"uller.
\newblock A theorem on geometric rigidity and the derivation of nonlinear plate
  theory from three-dimensional elasticity.
\newblock {\em Comm. Pure Appl. Math.}, 55(11):1461--1506, 2002.

\bibitem[FJM06]{FrieseckeJamesMueller_06}
Gero Friesecke, Richard~D. James, and Stefan M\"uller.
\newblock A hierarchy of plate models derived from nonlinear elasticity by
  {G}amma-convergence.
\newblock {\em Arch. Ration. Mech. Anal.}, 180(2):183--236, 2006.

\bibitem[FK20]{FriedrichKruzik:20}
Manuel Friedrich and Martin Kru\v{z}\'ik.
\newblock Derivation of von {K}\'arm\'an plate theory in the framework of
  three-dimensional viscoelasticity.
\newblock {\em Arch. Ration. Mech. Anal.}, 238(1):489--540, 2020.

\bibitem[KS93]{KochStahel_93}
Herbert Koch and Andreas Stahel.
\newblock Global existence of classical solutions to the dynamical von
  {K}\'arm\'an equations.
\newblock {\em Math. Methods Appl. Sci.}, 16(8):581--586, 1993.

\bibitem[LDR95]{LeDretRaoult1995}
Herv\'e{} Le~Dret and Annie Raoult.
\newblock The nonlinear membrane model as variational limit of nonlinear
  three-dimensional elasticity.
\newblock {\em J. Math. Pures Appl. (9)}, 74(6):549--578, 1995.

\bibitem[LM09]{LecumberryMueller_09}
Myriam Lecumberry and Stefan M\"uller.
\newblock Stability of slender bodies under compression and validity of the von
  {K}\'arm\'an theory.
\newblock {\em Arch. Ration. Mech. Anal.}, 193(2):255--310, 2009.

\bibitem[MM08]{MoraMueller:08}
M.~G. Mora and S.~M\"uller.
\newblock Convergence of equilibria of three-dimensional thin elastic beams.
\newblock {\em Proc. Roy. Soc. Edinburgh Sect. A}, 138(4):873--896, 2008.

\bibitem[MMS07]{MoraMuellerSchultz:07}
M.~G. Mora, S.~M\"{u}ller, and M.~G. Schultz.
\newblock Convergence of equilibria of planar thin elastic beams.
\newblock {\em Indiana Univ. Math. J.}, 56(5):2413--2438, 2007.

\bibitem[MP08]{MuellerPakzad_08}
S.~M\"uller and M.~R. Pakzad.
\newblock Convergence of equilibria of thin elastic plates---the von
  {K}\'arm\'an case.
\newblock {\em Comm. Partial Differential Equations}, 33(4-6):1018--1032, 2008.

\bibitem[MS12]{MoraScardia_09}
Maria~Giovanna Mora and Lucia Scardia.
\newblock Convergence of equilibria of thin elastic plates under physical
  growth conditions for the energy density.
\newblock {\em J. Differential Equations}, 252(1):35--55, 2012.

\bibitem[Sch06]{Schmidt:06}
Bernd Schmidt.
\newblock A derivation of continuum nonlinear plate theory from atomistic
  models.
\newblock {\em Multiscale Model. Simul.}, 5(2):664--694, 2006.

\bibitem[Sch08]{Schmidt:08}
Bernd Schmidt.
\newblock On the passage from atomic to continuum theory for thin films.
\newblock {\em Arch. Ration. Mech. Anal.}, 190(1):1--55, 2008.

\bibitem[Sim87]{Simon1986}
Jacques Simon.
\newblock Compact sets in the space {$L^p(0,T;B)$}.
\newblock {\em Ann. Mat. Pura Appl. (4)}, 146:65--96, 1987.

\bibitem[SZ23a]{SchmidtZeman2023}
Bernd Schmidt and Ji\v{r}\'i Zeman.
\newblock A bending-torsion theory for thin and ultrathin rods as a
  {$\Gamma$}-limit of atomistic models.
\newblock {\em Multiscale Model. Simul.}, 21(4):1717--1745, 2023.

\bibitem[SZ23b]{SchmidtZeman2023b}
Bernd Schmidt and Ji\v{r}\'i Zeman.
\newblock A continuum model for brittle nanowires derived from an atomistic
  description by {$\Gamma$}-convergence.
\newblock {\em Calc. Var. Partial Differential Equations}, 62(9):Paper No. 243,
  37, 2023.

\bibitem[vK10]{vonKarman}
Theodore von {K}{\'a}rm{\'a}n.
\newblock {F}estigkeitsprobleme im {M}aschinenbau.
\newblock In {\em {E}ncyclop\"adie der {M}athematischen {W}issenschaften},
  volume IV/4, pages 311--385. Teubner, Leipzig, 1910.

\end{thebibliography}

\end{document}